\theoremstyle{plain}
\newtheorem{theorem}{Theorem}[section]
\newtheorem{proposition}[theorem]{Proposition}
\newtheorem{corollary}[theorem]{Corollary}
\theoremstyle{definition}
\newtheorem{example}[theorem]{Example}
\newtheorem{definition}[theorem]{Definition}
\theoremstyle{remark}
\newtheorem{remark}[theorem]{Remark}
\newtheorem{question}[theorem]{Question}
\DeclareMathOperator{\inter}{{\rm int}}
\DeclareMathOperator{\cl}{{\rm cl}}
\theoremstyle{definition}
\numberwithin{equation}{section}
\begin{document}


\baselineskip=17pt


\title{$P$-spaces in the absence of the Axiom of Choice}

\author{Kyriakos Keremedis\\
Department of Mathematics, University of the Aegean\\ 
Karlovassi 83200, Samos, Greece\\
E-mail: kker@aegean.gr
\and 
AliReza Olfati\\
Department of Mathematics. Faculty of Basic Sciences,\\ 
Yasouj University, Daneshjoo St.,  Yasouj 75918-74934, Iran\\
E-mail: alireza.olfati@yu.ac.ir
\and 
Eliza Wajch\\
Institute of Mathematics
Faculty of Exact and Natural Sciences,\\
Siedlce University of Natural Sciences and Humanities,\\
ul. 3 Maja 54, 08-110 Siedlce, Poland\\
E-mail: eliza.wajch@gmail.com}

\date{}

\maketitle


\renewcommand{\thefootnote}{}

\footnote{2020 \emph{Mathematics Subject Classification}: Primary 03E35, 54A35, 54G10, 54C35; Secondary 03E25, 54C30, 54F50}

\footnote{\emph{Key words and phrases}: Weak forms of the Axiom of Choice, $P$-space, Baire space, (strongly) zero-dimensional space, function space, $\mathbf{ZF}$}

\renewcommand{\thefootnote}{\arabic{footnote}}
\setcounter{footnote}{0}


\begin{abstract}
A $P$-space is a topological space whose every $G_{\delta}$-set is open. In this article, basic properties of $P$-spaces are investigated in the absence of the Axiom of Choice. New weaker forms of the Axiom of Choice, all relevant to $P$-spaces or to countable intersections of $G_{\delta}$-sets, are introduced. Several independence results are obtained and open problems are posed. It is shown that a zero-dimensional subspace of the real line may fail to be strongly zero-dimensional in $\mathbf{ZF}$. Among the open problems there is the question whether it is provable in $\mathbf{ZF}$ that every finite product of $P$-spaces is a $P$-space. A partial answer to this question is given. 
\end{abstract}

\section{Introduction}
\label{s1}
\subsection{The main aim}
\label{s1.1}
In \cite{gh}, L. Gillman and M. Henriksen introduced the concepts of a $P$-point and a $P$-space in the class of Tychonoff spaces. These concepts can be extended to the class of all topological spaces as follows:  

\begin{definition}
	\label{s1d1}
	(Cf. \cite{gh}.) Let $\mathbf{X}=\langle X, \tau\rangle$ be a topological space. 
	\begin{enumerate}
		\item[(i)] A point $x\in X$ is called a \emph{$P$-point} of $\mathbf{X}$ if, for every $G_{\delta}$-set $U$ in $\mathbf{X}$ such that $x\in U$, it holds that $x\in\inter_{\mathbf{X}}(U)$  where  $\inter_{\mathbf{X}}(U)$ denotes the interior of $U$ in $\mathbf{X}$.
		\item[(ii)] $\mathbf{X}$ is called a \emph{$P$-space} (in the sense of Gillman-Henriksen) if every point of $X$ is a $P$-point in $\mathbf{X}$.
	\end{enumerate}
\end{definition}

Tychonoff $P$-spaces have been widely studied in $\mathbf{ZFC}$ by many authors and are still being investigated (cf., e.g., \cite{Dow}, \cite{gh}, \cite{hart}, \cite{mis}); however, they do not seem to have been ever studied in deep in the absence of the Axiom of Choice.
 
In this article, the intended context of reasoning and statements of theorems is the Zermelo-Fraenkel set theory $\mathbf{ZF}$ in which the Axiom of Choice (denoted by $\mathbf{AC}$) is deleted. The main aim of this article is to establish which basic properties of (not necessarily Tychonoff) $P$-spaces that were proved in $\mathbf{ZFC}$ in the past are also provable in $\mathbf{ZF}$, and to show several statements on $P$-spaces that are independent of $\mathbf{ZF}$ or follow in $\mathbf{ZF}$ from weaker assumptions than $\mathbf{AC}$. Since Tychonoff $P$-spaces can be characterized in $\mathbf{ZFC}$ by some properties of their algebras of continuous real functions (for instance, in view of \cite[Theorem 4.2]{gh}, a Tychonoff space $\mathbf{X}$ is a $P$-space if and only if every continuous real function on $\mathbf{X}$ is locally constant), we also pay attention to some special subsets of the algebras of continuous real functions. 

Unfortunately, we have not verified yet everything what is known about $P$-spaces in $\mathbf{ZFC}$ to see what remains from it in $\mathbf{ZF}$. However, many new results of our first study of $P$-spaces in $\mathbf{ZF}$ are included in this article. We leave more extensive study in this direction for the future. 

Before we pass to the body of this article, let us lay out our notation and basic terminology in  Sections \ref{s1.2}--\ref{s1.4}, and describe in brief the content of the article in Section \ref{s1.5}.

\subsection{Set-theoretic terminology and notation}
\label{s1.2}

To stress the fact that a result is proved in $\mathbf{ZF}$ or $\mathbf{ZF+\Psi}$ (where $\mathbf{\Psi}$ is a statement independent of $\mathbf{ZF}$), we shall write
at the beginning of the statements of the theorems and propositions [$%
\mathbf{ZF}$] or [$\mathbf{ZF+\Psi}$], respectively. Apart from models of $\mathbf{ZF}$, we refer to some models
of $\mathbf{ZFA}$ (or $\text{ZF}^0$ in \cite{hr}), that is, we refer also to Zermelo-Fraenkel set theory with atoms (see \cite{j}, \cite{j1} and \cite{hr}). 

We recall that a \emph{Dedekind-infinite} set is any set $X$ which is equipotent to a proper subset of $X$. If a set is not Dedekind-infinite, it is called \emph{Dedekind-finite}. As usual, $\omega$ is the set of all Dedekind-finite ordinal numbers of von Neumann. A set equipotent to a member of $\omega$ is called \emph{finite}. A set is \emph{infinite} if it is not finite. A set $X$ is \emph{countable} if $X$ is equipotent to a subset of $\omega$. An infinite countable set is called \emph{denumerable}. Members of $\omega$ are non-negative integers. For $n\in\omega$, $n+1=n\cup\{n\}$. For our convenience, we put $\mathbb{N}=\omega\setminus\{0\}$. 

For every set $X$, $\mathcal{P}(X)$ denotes the power set of $X$, $[X]^{<\omega}$ denotes  the set of all finite subsets of $X$, and $[X]^{\leq \omega}$ stands for the family of all countable subsets of $X$.  A \emph{partition} of a set $X$ is a pairwise disjoint cover of $X$. Let us recall the following notions: 

\begin{definition}
	\label{s1d2}
	A set $X$ is called
	\begin{enumerate} 
		\item[(i)] \emph{weakly Dedekind-infinite} if $\mathcal{P}(X)$ is  Dedekind-infinite (otherwise, $X$ is called \emph{weakly Dedekind-finite}) (cf., e.g., \cite{kw2} and \cite[Note 94]{hr});
		\item[(ii)] \emph{quasi Dedekind-infinite} if the set $[X]^{<\omega}$ is Dedekind-infinite (otherwise, $X$ is called \emph{quasi Dedekind-finite}) (cf., e.g., \cite{kw2});
		\item[(iii)] \emph{amorphous} if $X$ is infinite and, for every infinite subset $A$ of $X$, the set $X\setminus A$ is finite (cf., e.g., \cite[Note 57]{hr});
		\item[(iv)] a \emph{cuf} set if $X$ is a countable union of finite sets (cf., e.g., \cite{kw3});
		\item[(v)] a \emph{cuc} set if $X$ is a countable union of countable sets.
	\end{enumerate}
\end{definition}

\begin{remark}
	\label{s1r3}
	It is well known  and easy to observe that every amorphous set is weakly Dedekind-finite, every weakly Dedekind-finite set is quasi Dedekind-finite, every Dedekind-infinite set is quasi Dedekind-infinite.  
\end{remark}

Given a family $\mathcal{A}=\{A_j: j\in J\}$, every function $f\in\prod\limits_{j\in J}A_j$ is called a \emph{choice function} of $\mathcal{A}$. If $I$ is an infinite subset of $J$, then a choice function of $\{A_j: j\in I\}$ is called a \emph{partial choice function} of $\mathcal{A}$. Every choice function of $\{[A_j]^{<\omega}\setminus\{\emptyset\}: j\in J\}$ is called a \emph{multiple choice function} of $\mathcal{A}$. For sets $J$ and $X$, if for every $j\in J$, $X_j=X$, then $X^{J}=\prod\limits_{j\in J}X_j$.

Let us recall definitions and abbreviations of several forms (all weaker than $\mathbf{AC}$) which will be used in the sequel.

\begin{definition}
	\label{hrforms}
	\begin{enumerate}
		\item $\mathbf{MC}$ (\cite[Form 67]{hr}): Every non-empty family of non-empty sets has a multiple choice function.
		\item $\mathbf{CAC}$ (\cite[Form 8]{hr}): Every denumerable family of non-empty sets has a choice function.
		\item $\mathbf{CAC}(\mathbb{R})$ (\cite[Form 94]{hr}): Every denumerable family of non-empty subsets of $\mathbb{R}$ has a choice function.
		\item $\mathbf{CAC}_{\omega}(\mathbb{R})$ (\cite[Form 5]{hr}): Every denumerable family of denumerable subsets of $\mathbb{R}$ has a choice function.
		\item $\mathbf{CAC}_{fin}$ (\cite[Form 10]{hr}): Every denumerable family of non-empty finite sets has a choice function.
		\item $\mathbf{CMC}$ (\cite[Form 126]{hr}): Every denumerable family of non-empty sets has a multiple choice function.
		\item $\mathbf{CMC}_{\omega}$ (\cite[Form 350]{hr}): Every denumerable family of denumerable sets has a multiple choice function.
		\item $\mathbf{CMC}(\aleph_0, \infty)$ (\cite[Form 131]{hr}): For every family $\{A_{i}: i\in\omega\}$ of non-empty sets there exists a family $\{B_{i}: i\in\omega\}$ of non-empty countable sets such that, for every $i\in\omega$, $B_{i}\subseteq A_{i}$.
		
		\item $\mathbf{DC}$ (the Principle of Dependent Choices, \cite[Form 43]{hr}): For every set $X$ and every binary relation $R\subseteq X\times X$ such that $(\forall x\in X)(\exists y\in X)\langle x, y\rangle\in R$, there exists $f\in X^{\omega}$ such that $(\forall n\in\omega)\langle f(n), f(n+1)\rangle\in R$.
		
		\item $\mathbf{WDC}$ (the Principle of Weak Dependent Choices, \cite[Form \text{[106 B]}]{hr}): If $R$ is a binary relation on a set $E$ such that $(\forall x\in E)(\exists y\in E)(\langle x, y\rangle\in R)$, then there exists a sequence $(F_n)_{n\in\omega}$ of non-empty finite subsets of $E$ such that $(\forall n\in\omega)(\forall x\in F_n)(\exists y\in F_{n+1})(\langle x, y\rangle\in R)$. 
		
		\item $\mathbf{vDCP}(\omega)$ (van Douwen's Choice Principle, \cite[Form 119]{hr}): For every disjoint family  $\mathcal{A}=\{A_i: i\in\omega\}$ such that there exists $f\in\prod\limits_{i\in \omega}\mathcal{P}(A_i\times A_i)$  such that, for every $i\in\omega$, the pair $\langle A_i, f(i)\rangle$ is a linearly ordered set order isomorphic to $\langle\mathbb{Z}, \leq\rangle$ where $\leq$ is the standard linear order in $\mathbb{Z}$, it holds that $\mathcal{A}$ has a choice function.

		\item $\mathbf{IDI}$ (\cite[Form 9]{hr}): Every infinite set is Dedekind-infinite.
		\item $\mathbf{IQDI}$ (\cite{kw2}): Every infinite set is quasi Dedekind-infinite.
		\item $\mathbf{IWDI}$ (\cite[Form 82]{hr}): Every infinite set is weakly Dedekind-infinite.
		\item $\mathbf{NAS}$ (\cite[Form 64]{hr}): There are no amorphous sets. 
		\item $\mathbf{IDI}(\mathbb{R})$ (\cite[Form 13]{hr}): Every infinite subset of $\mathbb{R}$ is Dedekind-infinite.
		\item $\mathbf{UT}(\aleph_0, cuf, cuf)$ (\cite[Form 419]{hr1}): Every countable union of cuf sets is a cuf set. 
	\end{enumerate}
\end{definition}

\begin{remark}
	\label{s1r5}
	
	(i) It is obvious that the following implications are true in $\mathbf{ZF}$: 
	$$\mathbf{IDI}\rightarrow\mathbf{IQDI}\rightarrow \mathbf{IWDI}\rightarrow \mathbf{NAS}.$$
	None of the implications given above is reversible in $\mathbf{ZF}$ (see \cite{hr} and \cite{kw2}).
	
	(ii) $\mathbf{WDC}$ is equivalent to the Principle of Dependent Multiple Choices (see \cite[Form \text{[106 A]}]{hr}) whose definition in \cite{hr} is more complicated than the definition of $\mathbf{WDC}$. Let us denote by $\mathbf{DMC}$ the Principle of Dependent Multiple Choices. 
\end{remark}

In the sequel, for sets $X$ and $Y$, $|X|\leq |Y|$ is the abbreviation to the sentence: ``$X$ is equipotent to a subset of $Y$''. If $|X|\leq|Y|$, we say that $X$ is of size $\leq |Y|$. In $\mathbf{ZF}$,  we need not define cardinalities of sets but we can define well-ordered cardinals as initial ordinal numbers of von Neumann. When we refer to $\omega$ as to a well-ordered cardinal, we denote it by $\aleph_0$. If $\kappa$ is a well-ordered cardinal and $X$ is a set, then $|X|\leq\kappa$ means: ``$X$ is equipotent to a subset of $\kappa$''.

\begin{definition}
	\label{s1d6}
	Let $\mathcal{A}$ be a family of subsets of a set $X$. Then:
	\begin{enumerate}
		\item[(i)] $\mathcal{A}_{\delta}=\{\bigcap \mathcal{U}: \mathcal{U}\in[\mathcal{A}]^{\leq\omega}\setminus\{\emptyset\}\}$;
		\item[(ii)] $\mathcal{A}_{\sigma}=\{\bigcup \mathcal{U}: \mathcal{U}\in[\mathcal{A}]^{\leq\omega}\}$;
		\item[(iii)] $\mathcal{A}$ is \emph{stable under countable intersections} if $\mathcal{A}_{\delta}\subseteq\mathcal{A}$;
		\item[(iv)] $\mathcal{A}$ is \emph{stable under countable unions} if $\mathcal{A}_{\sigma}\subseteq\mathcal{A}$;
		\item[(v)] $\mathcal{A}$ is \emph{stable under finite intersections} (respectively, \emph{finite unions}) if, for every pair $A_1, A_2$ of members of $\mathcal{A}$, $A_1\cap A_2\in\mathcal{A}$ (respectively, $A_1\cup A_2\in\mathcal{A}$);
		\item[(vi)] $\mathcal{A}$ is \emph{centered} if $\mathcal{A}\neq\emptyset$ and, for every non-empty finite subfamily $\mathcal{U}$ of $\mathcal{A}$, $\bigcap\mathcal{U}\neq\emptyset$.
	\end{enumerate}
\end{definition}

In the following definition, we introduce several new forms which will be used in the forthcoming sections:

\begin{definition}
	\label{s1d7}
	\begin{enumerate}
		\item $\mathbf{UT}(\aleph_0, cuc, cuc)$: Every countable union of cuc sets is a cuc set.
		\item $\mathbf{CACCLO}$: For every family $\mathcal{A}=\{A_i: i\in\omega\}$ of non-empty sets such that there exists $f\in\prod\limits_{i\in\omega}\mathcal{P}(A_i\times A_i)$ such that, for every $i\in\omega$, $f(i)$ is a strongly complete linear ordering of $A_i$ (that is, $f(i)$ is a linear ordering of $A_i$ such that every non-empty countable subset of $A_i$ has a maximum in $\langle A_i, f(i)\rangle$), it holds that $\mathcal{A}$ has a choice function. 
		\item $\mathbf{PCACCLO}$: For every family $\mathcal{A}=\{A_i: i\in\omega\}$ of non-empty sets such that there exists $f\in\prod\limits_{i\in\omega}\mathcal{P}(A_i\times A_i)$ such that, for every $i\in\omega$, $f(i)$ is a strongly complete linear ordering of $A_i$, it holds that $\mathcal{A}$ has a partial choice function. 
		\item $\mathbf{A}(\sigma\sigma)$ (respectively, $\mathbf{A}(\delta\delta)$): For every set $X$ and every family $\mathcal{A}$ of subsets of $X$ such that $\mathcal{A}$ is stable under finite unions and finite intersections, for every family $\{\mathcal{U}_n: n\in\omega\}$ of non-empty countable subfamilies of $\mathcal{A}$, there exists a family $\{V_{n,m}: n,m\in\omega\}$ of members of $\mathcal{A}$ such that, for every $n\in\omega$, $\bigcup\mathcal{U}_n=\bigcup\limits_{m\in\omega}V_{n,m}$ (respectively, $ \bigcap\mathcal{U}_n=\bigcap\limits_{m\in\omega}V_{n,m}$).
	\end{enumerate}
\end{definition}

\subsection{Preliminaries related to topology}
\label{s1.3}

We usually denote topological or metric spaces with boldface letters, and their underlying sets with lightface letters. However, sometimes,  for a topological space $\mathbf{X}=\langle X, \tau\rangle$ and a set $Y\subseteq X$, we denote by $Y$ the topological space $\mathbf{Y}=\langle X, \tau|_Y\rangle$, where $\tau|_Y=\{U\cap Y: U\in\tau\}$ (that is, subsets of a given topological space $\mathbf{X}$ are considered as topological subspaces of $\mathbf{X}$). For a subset $A$ of a topological space $\mathbf{X}=\langle X, \tau\rangle$, the closure of $A$ in $\mathbf{X}$ is  denoted by $\cl_{\mathbf{X}}(A)$, and the interior of $A$ in $\mathbf{X}$ is denoted by $\inter_{\mathbf{X}}(A)$ (as in Definition \ref{s1d1}(i)).

We denote by $\tau_{nat}$ the natural topology of the real line $\mathbb{R}$, and the space $\langle \mathbb{R}, \tau_{nat}\rangle$ by $\mathbb{R}$ if this is not misleading. We also denote by $\mathbb{R}$ the metric space $\langle \mathbb{R}, \rho_e\rangle$ where, for all $x,y\in\mathbb{R}$,  $\rho_e(x,y)=|x-y|$. If not stated otherwise, all subsets of $\mathbb{R}$ are considered as topological subspaces of $\langle \mathbb{R}, \tau_{nat}\rangle$ and as a metric subspaces of $\langle\mathbb{R}, \rho_e\rangle$.

For every set $Y$, the discrete space $\langle Y, \mathcal{P}(Y)\rangle$ is denoted by $Y_{disc}$. In particular, $\mathbb{R}_{disc}$ denotes the discrete space $\langle \mathbb{R}, \mathcal{P}(\mathbb{R})\rangle$. The discrete space $\langle \{0,1\}, \mathcal{P}(\{0,1\})\rangle$ is denoted by $\mathbf{2}$.

Given a non-empty set $J$ and a family $\{\langle X_j, \tau_j\rangle: j\in J\}$ of topological spaces, if we write $\mathbf{X}=\prod\limits_{j\in J}\langle X_j,\tau_j\rangle$, then $\mathbf{X}=\langle X, \tau\rangle$ where $X=\prod\limits_{j\in J}X_j$ and $\tau$ is the (Tychonoff) product topology on $X$ determined by the family $\{\tau_j: j\in J\}$. For a fixed $j_0\in J$, $\pi_{j_0}:\prod\limits_{j\in J}X_j\to X_{j_0}$ denotes the projection defined by: for every $x\in\prod\limits_{j\in J}X_j$, $\pi_{j_0}(x)=x(j_0)$. 

For sets $X$ and $Y$, $\pi_X: X\times Y\to X$ and $\pi_Y: X\times Y\to Y$ denote the standard projections.

For topological spaces $\mathbf{X}=\langle X, \tau_X\rangle$ and $\mathbf{Y}=\langle Y, \tau_Y\rangle$, $C(\mathbf{X},\mathbf{Y})$ stands for the set of all $\langle \tau_X, \tau_Y\rangle$-continuous mappings from $X$ to $Y$. If this is not misleading, instead of ``$\langle \tau_X, \tau_Y\rangle$-continuous mappings'', we write shortly ``continuous mappings'' .

Let  $\mathbf{Y}=\langle Y, \rho\rangle$ be a given metric space. For every $y\in Y$ and a positive real number $\varepsilon$, 
$$B_{\rho}(y, \varepsilon)=\{z\in Y: \rho(y,z)<\varepsilon\}.$$
We denote by $\tau(\rho)$ the topology on $Y$ having $\{B_{\rho}(y, \varepsilon): y\in Y, \varepsilon>0\}$ as a base. The topology $\tau(\rho)$ is usually called \emph{induced by} $\rho$. For $A\subseteq Y$, let $\delta_{\rho}(A)=\sup\{ \rho(x,y): x,y\in A\}$. 

Now, let  $\mathbf{X}=\langle X, \tau\rangle$ be a given topological space, and $\mathbf{Y}=\langle Y, \rho\rangle$ a given metric space. We denote by $C(\mathbf{X},\mathbf{Y})$ the set of all $\langle \tau, \tau(\rho)\rangle$-continuous functions from $X$ to $Y$. We define a metric $d_u$ on $C(\mathbf{X},\mathbf{Y})$ as follows:
$$d_u(f,g)=\sup\{ \min\{\rho (f(x), g(x)), 1)\}: x\in X\}$$
for all $f,g\in C(X,Y)$. Then  the topological space $\langle C(\mathbf{X},\mathbf{Y}), \tau(d_u)\rangle$ is denoted by $C_u(\mathbf{X},\mathbf{Y})$.  For $f\in C(\mathbf{X},\mathbf{Y})$ and $A\subseteq X$, the oscillation of $f$ on $A$ is defined by:
$$\text{osc}_A(f)=\sup\{\rho(f(x), f(y)): x,y\in A\}.$$

The set $C(\mathbf{X},\mathbb{R})$ is denoted by $C(\mathbf{X})$, and the space $C_u(\mathbf{X}, \mathbb{R})$ by $C_u(\mathbf{X})$. 

For $f\in C(\mathbf{X})$, let $Z(f)=f^{-1}[\{0\}]$. Then $Z(f)$ is the \emph{zero-set} (or 0-\emph{set}) determined by $f$. Let $\mathcal{Z}(\mathbf{X})=\{Z(f): f\in C(\mathbf{X})\}$. The members of $\mathcal{Z}(\mathbf{X})$ are called \emph{zero-sets} (or 0-\emph{sets}) of $\mathbf{\mathbf{X}}$, their complements are called \emph{co-zero sets} of $\mathbf{\mathbf{X}}$. The family of all co-zero sets of $\mathbf{X}$ is denoted by $\mathcal{Z}^{c}(\mathbf{X})$.  The family of all clopen subsets of a topological space $\mathbf{X}$ is denoted by $\mathcal{CO}(\mathbf{X})$. The family of all $G_{\delta}$-sets of $\mathbf{X}$ is denoted by $\mathcal{G}_{\delta}(\mathbf{X})$. The family of all $F_{\sigma}$-sets of $\mathbf{X}$ is denoted by $\mathcal{F}_{\sigma}(\mathbf{X})$.

\begin{definition}
	\label{s1d8}
	For a topological space $\mathbf{X}$, we let $\mathcal{CO}_{\delta}(\mathbf{X})=\mathcal{CO}(\mathbf{X})_{\delta}$ and $\mathcal{Z}_{\delta}(\mathbf{X})=\mathcal{Z}(\mathbf{X})_{\delta}$. Every member of $\mathcal{CO}_{\delta}(\mathbf{X})$ is called a $c_{\delta}$-set in $\mathbf{X}$. Every member of $\mathcal{Z}_{\delta}(\mathbf{X})$ is called a $z_{\delta}$-set in $\mathbf{X}$.
\end{definition}

\begin{remark}
	\label{s1r9}
	For every topological space $\mathbf{X}$, the following inclusions hold: $$\mathcal{CO}(\mathbf{X})\subseteq\mathcal{CO}_{\delta}(\mathbf{X})\subseteq\mathcal{Z}(\mathbf{X})\subseteq\mathcal{G}_{\delta}(\mathbf{X}).$$
The forthcoming Theorem \ref{s4t10} shows that, in $\mathbf{ZF}$, a $z_{\delta}$-set in $\mathbf{X}$ may fail to be a zero-set in $\mathbf{X}$. 		
\end{remark}

The sets of functions $A(\mathbf{X})$ and $U_{\aleph_0}(\mathbf{X})$, defined below, are especially important here:

\begin{definition} 
	\label{s1d10}
	If $\mathbf{X}$ is a topological space, then:
	\begin{enumerate}
		\item[(i)] $U_{\aleph_0}(\mathbf{X})$ is the set of all $f\in C(\mathbf{X})$ such that, for every positive real number $\varepsilon$, there exists a countable cover $\mathcal{A}$ of $\mathbf{X}$ such that $\mathcal{A}\subseteq\mathcal{CO}(\mathbf{X})$ and, for each $A\in\mathcal{A}$, $\text{osc}_A(f)\leq\varepsilon$;
		\item[(ii)] (Cf. \cite{bk}.) $A(\mathbf{X})$ is the set of all $f\in C(\mathbf{X})$ such that, for each  $O\in\tau_{nat}$, the set $f^{-1}[O]$ is a countable union of clopen sets of $\mathbf{X}$. 
	\end{enumerate}
\end{definition}

Every open base of a topological space $\mathbf{X}$ will be called a \emph{base} of $\mathbf{X}$.

\begin{definition}
	\label{s1d11} 
	We call a topological space $\mathbf{X}=\langle X, \tau\rangle$:
	\begin{enumerate}
		\item[(i)] \emph{regular} if for every $x\in X$ and every $V\in \tau$ such that $x\in V$, there exists $U\in \tau$ such that $x\in U$ and $\cl_{\mathbf{X}}(U)\subseteq V$;
		\item[(ii)] a \emph{$T_3$-space} if $\mathbf{X}$ is a regular $T_1$-space;
		\item[(iii)] \emph{completely regular} if $\mathcal{Z}^{c}(\mathbf{X})$ is a base of $\mathbf{X}$;
		\item[(iv)] \emph{Tychonoff} or $T_{3\frac{1}{2}}$-space if $\mathbf{X}$ is a completely regular $T_1$-space;
		\item[(v)] \emph{zero-dimensional} if $\mathcal{CO}(\mathbf{X})$ is a base of $\mathbf{X}$;
		\item[(vi)] \emph{strongly zero-dimensional} if $\mathbf{X}$ is completely regular and, for every pair $Z_1, Z_2$ of disjoint zero-sets in $\mathbf{X}$, there exists $U\in\mathcal{CO}(\mathbf{X})$ such that $Z_1\subseteq U\subseteq X\setminus Z_2$;
		\item [(vii)] \emph{Loeb} if the family of all non-empty closed sets of $\mathbf{X}$ has a choice function (cf., e.g., \cite{br}, \cite{kt}, \cite{kw4});
		\item[(viii)] \emph{limit point compact} if every infinite subset of $\mathbf{X}$ has an accumulation point in $\mathbf{X}$ (cf., e.g., \cite{klpc});
		\item [(ix)] \emph{(countably) compact} if every (countable) open cover of $\mathbf{X}$ has a finite subcover;
		\item[(x)] a \emph{Baire space} if, for every denumerable family $\mathcal{U}$ of dense open subsets of $\mathbf{X}$, the set $\bigcap\mathcal{U}$ is dense in $\mathbf{X}$;
		\item[(xi)] a \emph{quasi Baire space} if, for every denumerable family $\mathcal{U}$ of dense open subsets of $\mathbf{X}$, $\bigcap\mathcal{U}\neq\emptyset$;
		\item[(xii)] a \emph{strongly Baire space} if, for every denumerable family $\mathcal{U}$ of dense open subsets of $\mathbf{X}$, the set $\bigcap\mathcal{U}$ is open and dense in $\mathbf{X}$;
		\item[(xiii)] a \emph{strongly quasi Baire space} if, for every denumerable family $\mathcal{U}$ of dense open subsets of $\mathbf{X}$, the set $\bigcap\mathcal{U}$ is non-empty and open in $\mathbf{X}$.
	\end{enumerate}
\end{definition} 

Notions (i)--(x) of Definition \ref{s1d10} are not new. The concept of a Loeb space, having roots in \cite{loeb} and \cite{br}, has been investigated, for instance, in \cite{kw4} recently.  Notions (xi)--(xiii) of Definition \ref{s1d10} are new. 
\begin{remark}
	It is known that $\mathbb{R}$ and, for instance, all compact Hausdorff second-countable spaces are Loeb (see, e.g., \cite[Theorem 2.3 and Corollary 2.4]{kw4}).
\end{remark}

Every non-empty Baire space is a quasi Baire space and every strongly Baire space is a Baire space. The following example shows that a quasi Baire space need not be a Baire space.

\begin{example}
	\label{s1e12}
	$[\mathbf{ZF}]$ Let $X=(\mathbb{Q}\cap(0,1))\cup[2, 3]$. Since every separable completely metrizable space is Baire in $\mathbf{ZF}$ (see, e.g., \cite[Theorem 4.102]{her}), the interval $[2,3]$ is a Baire space. This implies that the subspace $\mathbf{X}$ of $\mathbb{R}$ is a quasi Baire space. The family $\mathcal{D}= \{X\setminus\{q\}: q\in \mathbb{Q}\cap(0,1)\}$ is denumerable and every member of $\mathcal{D}$ is a dense open set in $\mathbf{X}$. However, the set $\bigcap\mathcal{D}=[2,3]$ is not dense in $\mathbf{X}$. Therefore, $\mathbf{X}$ is a quasi Baire space which is not Baire.
\end{example}

\begin{definition}
	\label{s1d13}
	If $\mathcal{B}$ is a base of a topological space $\mathbf{X}$, then we say that:
	\begin{enumerate}
		\item[(i)] $\mathcal{B}$ is \emph{strongly stable under countable intersections} if $\emptyset\notin\mathcal{B}$ and, for every countable centered subfamily $\mathcal{U}$ of $\mathcal{B}$, $\bigcap\mathcal{U}\in\mathcal{B}$;
		\item[(ii)] $\mathcal{B}$ is \emph{almost stable under countable intersections} if $\mathcal{B}\cup\{\emptyset\}$ is stable under countable intersections;
		\item[(iii)] $\mathcal{B}$ is \emph{weakly stable under countable intersections} if $\mathcal{B}_{\delta}\subseteq \tau$.
	\end{enumerate}
\end{definition}

\begin{proposition}
	\label{s1p14}
	$[\mathbf{ZF}]$ Suppose that $\mathcal{B}$ is a base of a topological space $\mathbf{X}=\langle X, \tau\rangle$. 
	\begin{enumerate}
		\item[(i)] If $\mathcal{B}$ is strongly stable under countable intersections, then it is almost stable under countable intersections. 
		\item[(ii)] If $\mathcal{B}$ is stable under finite intersections and strongly stable under countable intersections, then $\mathcal{B}$ is stable under countable intersections.
		\item[(iii)] If $\mathcal{B}$ is almost stable under countable intersections, then it is weakly stable under countable intersections.
	\end{enumerate}
\end{proposition}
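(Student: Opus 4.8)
The plan is to prove all three items directly from Definitions~\ref{s1d6} and~\ref{s1d13} by a short case analysis that turns on whether the intersection under consideration is empty. No choice principle is needed, so everything goes through in $\mathbf{ZF}$.

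For~(i), I would fix a non-empty countable subfamily $\mathcal{U}$ of $\mathcal{B}\cup\{\emptyset\}$ and argue that $\bigcap\mathcal{U}\in\mathcal{B}\cup\{\emptyset\}$. If $\emptyset\in\mathcal{U}$, then $\bigcap\mathcal{U}=\emptyset$ and we are done; so assume $\mathcal{U}\subseteq\mathcal{B}$. If $\mathcal{U}$ is not centered, then some non-empty finite subfamily of $\mathcal{U}$ already has empty intersection, whence $\bigcap\mathcal{U}=\emptyset\in\mathcal{B}\cup\{\emptyset\}$. Otherwise $\mathcal{U}$ is a centered countable subfamily of $\mathcal{B}$, and the hypothesis that $\mathcal{B}$ is strongly stable under countable intersections yields $\bigcap\mathcal{U}\in\mathcal{B}$. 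In all cases $\bigcap\mathcal{U}\in\mathcal{B}\cup\{\emptyset\}$, so $(\mathcal{B}\cup\{\emptyset\})_{\delta}\subseteq\mathcal{B}\cup\{\emptyset\}$, which is exactly almost stability.

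For~(ii), the one observation that does the work is that the two hypotheses together forbid empty finite intersections. Indeed, strong stability gives $\emptyset\notin\mathcal{B}$, and an easy induction based on stability under finite intersections shows that the intersection of any non-empty finite subfamily of $\mathcal{B}$ again lies in $\mathcal{B}$, hence is non-empty. Consequently every non-empty subfamily of $\mathcal{B}$---in particular every non-empty countable subfamily $\mathcal{U}$ used to form a member of $\mathcal{B}_{\delta}$---is centered. Strong stability then applies to such $\mathcal{U}$ and gives $\bigcap\mathcal{U}\in\mathcal{B}$, so $\mathcal{B}_{\delta}\subseteq\mathcal{B}$ and $\mathcal{B}$ is stable under countable intersections.

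For~(iii), I would simply note that a non-empty countable subfamily $\mathcal{U}$ of $\mathcal{B}$ is also such a subfamily of $\mathcal{B}\cup\{\emptyset\}$, so almost stability gives $\bigcap\mathcal{U}\in\mathcal{B}\cup\{\emptyset\}$; since the members of a base are open and $\emptyset\in\tau$, this places $\bigcap\mathcal{U}$ in $\tau$, proving $\mathcal{B}_{\delta}\subseteq\tau$. There is no real obstacle in any of the three parts; the only points requiring care are the correct treatment of empty intersections in~(i) and the realization in~(ii) that centeredness of the relevant subfamilies is automatic rather than an extra hypothesis.
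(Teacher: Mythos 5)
Your proof is correct and follows essentially the same route as the paper's: the case analysis on whether the intersection is empty in (i), the observation in (ii) that $\emptyset\notin\mathcal{B}$ together with stability under finite intersections makes every non-empty countable subfamily of $\mathcal{B}$ centered, and the immediate verification of (iii). You are merely more explicit than the paper (e.g., in treating the case $\emptyset\in\mathcal{U}$ and the induction behind finite intersections), which is fine but not a different argument.
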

\begin{proof}
	Let $\mathcal{U}$ be a non-empty countable subfamily of $\mathcal{B}$ and let $U=\bigcap\mathcal{U}$. 
	
	(i) Suppose that $\mathcal{B}$ is strongly stable under countable intersections. If $U\neq\emptyset$, then $\mathcal{U}$ is centered, so $U\in\mathcal{B}$. This implies that $\mathcal{B}_{\delta}\subseteq\mathcal{B}\cup\{\emptyset\}$, so (i) holds.
	
	(ii) Suppose that $\mathcal{B}$ is both strongly stable under countable intersections and stable under finite intersections. Then, since $\mathcal{U}$ is countable, $\mathcal{B}$ is stable under finite intersections and $\emptyset\notin\mathcal{B}$, it follows that the family $\mathcal{U}$ is centered. Hence $U\in\mathcal{B}$ because $\mathcal{B}$ is strongly stable under countable intersetions. This shows that (ii) holds.
	
	It is obvious that (iii) holds.
\end{proof}

\begin{example}
	\label{s1e16}
	$[\mathbf{ZF}]$ (i) The family $\mathcal{B}=\{\{x\}: x\in\mathbb{R}\}\cup\{[x, +\infty): x\in\mathbb{Q}\}$ is a weakly stable under countable intersections base of $\mathbb{R}_{disc}$. That $\mathcal{B}$ is not almost stable under countable intersections follows from the fact that if $(x_n)_{n\in\mathbb{N}}$ is an increasing sequence of rational numbers such that $\lim\limits_{n\to +\infty}x_n=\pi$, then $\bigcap\limits_{n\in\mathbb{N}}[x_n, +\infty)=[\pi, +\infty)\notin\mathcal{B}$. 
	
	(ii) If $\mathbb{N}$ is endowed with the discrete topology, then the family $\mathcal{B}=\{U\subseteq\mathbb{N}: 0<|U|<\aleph_0\vee|\mathbb{N}\setminus U|<\aleph_0\}$ is a base of $\mathbb{N}$ such that $\mathcal{B}$ is almost stable under countable intersections but not strongly stable under countable intersections as $\mathcal{U}=\{\mathbb{N}\setminus\{n\}: n\in\mathbb{N}\}$ is a centered family of members of $\mathcal{B}$ such that $\bigcap\mathcal{U}=\emptyset\notin\mathcal{B}$. 
\end{example}

\begin{definition}
	\label{s1d17}
	For an infinite set $X$ and an element $\infty\notin X$, let $X(\infty)=X\cup\{\infty\}$. Then $\mathbf{X}(\infty)$ is the one-point Hausdorff compactification of the discrete space $X_{disc}$ such that $\infty$ is the unique accumulation point of $\mathbf{X}(\infty)$. More precisely, $\mathbf{X}(\infty)=\langle X(\infty), \tau\rangle$ where $\tau=\mathcal{P}(X)\cup\{X(\infty)\setminus F: F\in [X]^{<\omega}\}$.
\end{definition}

\begin{definition}
	\label{s1d18}
	(Cf. \cite{kw3}.) A topological space $\langle X, \tau\rangle$ is called a \emph{cuf space} if $X$ is a cuf set.
\end{definition}

\begin{definition}
	\label{s1d19}
	For a topological space $\mathbf{X}=\langle X, \tau\rangle$, let $\tau(\mathcal{G}_{\delta}(\mathbf{X}))$ be the topology on $X$ such that $\mathcal{G}_{\delta}(\mathbf{X})$ is a base of $\langle X, \tau(\mathcal{G}_{\delta}(\mathbf{X})\rangle$. Then $(\mathbf{X})_{\delta}=\langle X, \tau(\mathcal{G}_{\delta}(\mathbf{X})\rangle$. 
\end{definition}

Other topological notions, if not defined in this article, are standard and can be found in \cite{en} or \cite{w}.

\subsection{The list of new topological forms}
\label{s1.4}
In the following definition, we introduce several new forms, investigated deeper in Section \ref{s5}.

\begin{definition}
	\label{s1d20}
	\begin{enumerate}
		\item $\mathbf{G}(T_{3\frac{1}{2}}, {\mathcal{G}_{\delta}}_{\delta},\mathcal{G}_\delta)$: For every $T_{3\frac{1}{2}}$-space, the family $\mathcal{G}_{\delta}(\mathbf{X})$ is stable under countable intersections.
		\item $\mathbf{Z}(CR,\mathcal{Z}_{\delta}, \mathcal{Z})$: For every completely regular space $\mathbf{X}$, the family $\mathcal{Z}(\mathbf{X})$ is stable under countable intersections.
		\item $\mathbf{SBSqB}$: Every topological space with a strongly stable under countable intersections base is strongly quasi Baire.
		\item $\mathbf{SB0SqB}$: Every zero-dimensional $T_1$-space having a strongly stable under countable intersections base is strongly quasi Baire.
		\item $\mathbf{ABP}$: Every topological space having an almost stable under countable intersections base is a $P$-space.
		\item $\mathbf{AB0P}$: Every zero-dimensional $T_1$-space having an almost stable under countable intersections base is a $P$-space.
		\item $\mathbf{WBP}$: Every topological space having a weakly stable under countable intersections base is a $P$-space.
		\item $\mathbf{WB0P}$: Every zero-dimensional $T_1$-space having a weakly stable under countable intersections base is a $P$-space.
		\item $\mathbf{CRZOP}$: Every completely regular space whose every zero-set is open is a $P$-space.
		\item $\mathbf{PPP}$: For every pair $\mathbf{X}$, $\mathbf{Y}$ of $P$-spaces, the product $\mathbf{X}\times \mathbf{Y}$ is a $P$-space. 
	\end{enumerate}
\end{definition}

\subsection{The content of the article in brief}
\label{s1.5}
In Section \ref{s2}, for an arbitrary topological space $\mathbf{X}$, we investigate the sets $U_{\aleph_0}(\mathbf{X})$ and $A(\mathbf{X})$ (see Definition \ref{s1d10}) to apply them to strongly zero-dimensional and $P$-spaces in Sections \ref{s3} and \ref{s4}.  Using $U_{\aleph_0}(\mathbf{X})$, we show some equivalences of $\mathbf{CMC}$ and $\mathbf{CAC}$ in Theorem \ref{s2t6}. We notice that $\mathbf{A}(\sigma\sigma)$ and $\mathbf{A}(\delta\delta)$ are equivalent, and prove that the implications $\mathbf{CMC}\rightarrow\mathbf{A}(\sigma\sigma)\rightarrow\mathbf{CMC}_{\omega}$ are true in $\mathbf{ZF}$ (see Theorem \ref{s2t9}). We apply this result to the proof of Theorem \ref{s2t10} which is the main theorem on function spaces in Section \ref{s2}.  Among other facts, Theorem \ref{s2t10} asserts that it holds $\mathbf{ZF+CMC}$ that, for every topological space $\mathbf{X}$, the closure in $C_{u}(\mathbf{X})$ of the set $C(X, \mathbb{R}_{disc})$ is equal to both $U_{\aleph_0}(\mathbf{X})$ and $A(\mathbf{X})$.

In Section \ref{s3}, we prove that, for every topological space $\mathbf{X}$, it holds in $\mathbf{ZF+CMC}$ that $\mathbf{X}$ is strongly zero-dimensional if and only if $C(\mathbf{X})=A(\mathbf{X})=U_{\aleph_0}(\mathbf{X})$ (see Propositions \ref{s3p1} and \ref{s3p2}). We also show in Theorem \ref{s3t5} that $\mathbf{CAC}(\mathbb{R})$ is equivalent to each of the following statements: ``Every zero-dimensional subspace $\mathbf{X}$ of $\mathbb{R}$ has a countable base consisting of clopen sets in $\mathbf{X}$'' and ``Every zero-dimensional subspace of $\mathbb{R}$ is strongly zero-dimensional''.

In Section \ref{s4}, we summarize in Proposition \ref{s4p1} several elementary properties of $P$-spaces in $\mathbf{ZF}$.  Furthermore, we prove in $\mathbf{ZF}$ that $\mathbf{IQDI}$ is equivalent to the sentence ``Every $T_1$-space which is a limit point compact $P$-space is finite'', and the sentence ``Every strongly zero-dimensional $T_1$-space whose underlying set is quasi Dedekind-finite is a $P$-space'' implies $\mathbf{IDI}(\mathbb{R})$ (see Theorem \ref{s4t3}). We also show in Corollary \ref{s4c8} that $\mathbf{IQDI}$ is equivalent to each of the sentences: ``Every infinite space with the co-finite topology is not a $P$-space'' and ``For every infinite discrete space $\mathbf{X}$, the Alexandroff compactification of $\mathbf{X}$ is not a $P$-space''. We obtain several other consequences of $\mathbf{CMC}$ relevant to $P$-spaces. In particular,  Theorem \ref{s4t9} shows that, for every topological space $\mathbf{X}$, it holds in $\mathbf{ZF+CMC}$ that $(\mathbf{X})_{\delta}$ is a $P$-space, and the families $\mathcal{CO}_{\delta}(\mathbf{X})$, $\mathcal{Z}(\mathbf{X})$ and $\mathcal{G}_{\delta}(\mathbf{X})$ are stable under countable intersections.  Theorem \ref{s4t12}(iv) asserts that it holds in $\mathbf{ZF+CMC}$ that a zero-dimensional space $\mathbf{X}$ is a $P$-space if and only if $U_{\aleph_0}(\mathbf{X})=C(\mathbf{X}, \mathbb{R}_{disc})=C(\mathbf{X})$. 

Theorem \ref{s4t10} and its proof show that, among other facts, it holds in $\mathbf{ZF+\neg CMC_{\omega}}$ that there exists a $T_{3\frac{1}{2}}$-space $\mathbf{X}$ for which $(\mathbf{X})_{\delta}$ is not a $P$-space, the families $\mathcal{CO}_{\delta}(\mathbf{X})$, $\mathcal{Z}(\mathbf{X})$ and $\mathcal{G}_{\delta}(\mathbf{X})$ are not stable under countable intersections, and a countable product of compact, countable $T_{3\frac{1}{2}}$-spaces need not be first-countable. 

In Remark \ref{s4r13}, we notice that there is a model of $\mathbf{ZF}$ in which there exists a regular $P$-space which is not zero-dimensional.  Theorem \ref{s4t14} shows that it holds in $\mathbf{ZF+WDC}$ that every regular $P$-space is strongly zero-dimensional.  Theorem \ref{s4t15} concerns closed projections onto $P$-spaces and is a  modified, deeper, extended version of \cite[Theorem 2.1]{mis}. In particular, condition (iii) of Theorem \ref{s4t15} asserts that, in $\mathbf{ZF}$, given a second-countable but not countably compact space $\mathbf{Y}$ and a topological space $\mathbf{X}$, it holds that  $\mathbf{X}$ is a $P$-space if and only if the projection $\pi_X: \mathbf{X}\times\mathbf{Y}\to\mathbf{X}$ is closed. Moreover, the proof of condition (iv) of Theorem \ref{s4t15} shows that, in $\mathbf{ZF}$, a topological space $\mathbf{X}$ is a $P$-space if and only if the projection $\pi_X: \mathbf{X}\times\langle\omega,\omega+1\rangle\to\mathbf{X}$ is closed.  

Theorem \ref{s5t1} shows that, in $\mathbf{ZF+CAC}(\mathbb{R})$, for every subspace $\mathbf{X}$ of $\mathbb{R}$, the family $\mathcal{G}_{\delta}(\mathbf{X})$ is stable under countable intersections; moreover, it holds in $\mathbf{ZF}$ that if $\mathcal{G}_{\delta}(\mathbb{R})$ is stable under countable intersections, then $\mathbf{CAC}_{\omega}(\mathbb{R})$ is true. Other results of Section \ref{s5} concern mainly the forms given in Definition \ref{s1d20} and \ref{s1d7}. Let us mention some of the results. Theorem \ref{s5t3} shows that the following implications hold in $\mathbf{ZF}$: $\mathbf{CMC}\rightarrow(\mathbf{G}(T_{3\frac{1}{2}},\mathcal{G}_{\delta\delta},\mathcal{G}_{\delta})\wedge\mathbf{Z}(CR, \mathcal{Z}_{\delta},\mathcal{Z}))$ and $(\mathbf{G}(T_{3\frac{1}{2}},\mathcal{G}_{\delta\delta}, \mathcal{G}_{\delta})\vee\mathbf{Z}(CR, \mathcal{Z}_{\delta},\mathcal{Z}))\rightarrow (\mathbf{UT}(\aleph_0, cuc, cuc)\wedge\mathbf{UT}(\aleph_0, cuf, cuf))$. Theorem \ref{s5t9}(ii)-(iii) shows that, in $\mathbf{ZF}$,  $\mathbf{CACCLO}$ and $\mathbf{PCACCLO}$ are equivalent, follow from $\mathbf{AB0P}$ and imply $\mathbf{IDI}(\mathbb{R})$. Theorem \ref{s5t9}(i) shows that $\mathbf{ABP}$ holds in $\mathbf{ZF+CMC}(\aleph_0, \infty)$, and $\mathbf{PPP}$ holds in $\mathbf{ZF+ABP}$. That $\mathbf{DC}\rightarrow\mathbf{SBSqB}\rightarrow\mathbf{SB0SqB}\rightarrow\mathbf{CAC}$ and $\mathbf{WB0P}\rightarrow (\mathbf{CAC}(\mathbb{R})\wedge\mathbf{vDCP}(\omega))$ are true in $\mathbf{ZF}$ is shown by Theorem \ref{s5t10}. Corollary \ref{s5c10} states that $\mathbf{SB0SqB}$ implies $\mathbf{ABP}$ but this implication is not reversible in $\mathbf{ZF}$. We are unable to answer the following question:

\begin{question}
	\label{s1q21}
	Is $\mathbf{PPP}$ provable in $\mathbf{ZF}$?
\end{question}

However, by Theorem $\ref{s5t9}$(i), we know that $\mathbf{CMC}(\aleph_0, \infty)$ implies $\mathbf{PPP}$ in $\mathbf{ZF}$, and, by Theorem \ref{s5t12}, it holds in $\mathbf{ZF}$ that if a $P$-space $\mathbf{X}$ is either hereditarily Lindel\"of or a locally compact $T_2$-space, or every point of $\mathbf{X}$ has a well-orderable base of neighborhoods, then, for every $P$-space $\mathbf{Y}$, the space $\mathbf{X}\times\mathbf{Y}$ is a $P$-space.

Section \ref{s6} contains a shortlist of open problems.

\section{Special sets of continuous real functions}
\label{s2}

In this section, we assume that $\mathbf{X}=\langle X, \tau\rangle$ is a given topological space, $\mathbf{Y}=\langle Y, \rho\rangle$ a given metric space, and $E$ is a non-empty set. To have a deeper look at $U_{\aleph_0}(\mathbf{X})$ and $A(\mathbf{X})$, we are concerned with some special subsets of $C(\mathbf{X}, \mathbf{Y})$ that are defined as follows:

\begin{definition}
	\label{s2d1}
	\begin{enumerate}
		\item[(i)] $U_{|E|}(\mathbf{X},\mathbf{Y})$ is the set of all $f\in C(\mathbf{X}, \mathbf{Y})$ such that, for each $\varepsilon>0$, there exists a cover $\mathcal{A}=\{A_j: j\in J\}$ of $X$ such that $|J|\leq |E|$ and, for every $j\in J$, $A_j\in\mathcal{CO}(\mathbf{X})$ and  $\text{osc}_{A_j}(f)\leq\varepsilon$. 
		\item[(ii)] $U_{|E|,AC}(\mathbf{X}, \mathbf{Y})$ (respectively, $U_{|E|,MC}(\mathbf{X}, \mathbf{Y})$) is the set of all $f\in C(\mathbf{X}, \mathbf{Y})$ such that, for each $\varepsilon>0$, there exists a cover $\mathcal{A}=\{A_j: j\in J\}$ of $X$ such that $|J|\leq |E|$, $\mathcal{A}$  is pairwise disjoint and has a choice function (respectively, a multiple choice function), for every $j\in J$, $A_j\in\mathcal{CO}(\mathbf{X})$ and  $\text{osc}_{A_j}(f)\leq\varepsilon$.
		\item[(iii)] $U_{|E|,lf}(\mathbf{X},\mathbf{Y})$ is the set of all functions  $f\in C(\mathbf{X}, \mathbf{Y})$ such that, for every $\varepsilon>0$, there exists a locally finite clopen cover $\{A_j: j\in J\}$ of $X$ such that $|J|\leq|E|$ and, for every $j\in J$, $\text{osc}_{A_j}(f)\leq\varepsilon$.
	\end{enumerate}
	
	If $\mathbf{Y}=\mathbb{R}$, we put $U_{|E|}(\mathbf{X})=U_{|E|}(\mathbf{X}, \mathbb{R})$, $U_{|E|,AC}(\mathbf{X})=U_{|E|, AC}(\mathbf{X}, \mathbb{R})$, $U_{|E|,lf}(\mathbf{X})=U_{|E|,lf}(\mathbf{X},\mathbb{R})$. If $E=\omega$, instead of $|E|$, we write $\aleph_0$. 
\end{definition}

\begin{proposition}
	\label{s2p2}
	$[\mathbf{ZF}]$
	\begin{enumerate}
		\item[(i)]  $U_{|E|,AC}(\mathbf{X},\mathbf{Y})\subseteq U_{|E|,MC}(\mathbf{X},\mathbf{Y})\subseteq U_{|E|,lf}(\mathbf{X}, \mathbf{Y})\subseteq  U_{|E|}(\mathbf{X},\mathbf{Y})$.
		\item[(ii)] the sets  $U_{|E|}(\mathbf{X},\mathbf{Y})$, $U_{|E|,AC}(\mathbf{X},\mathbf{Y})$, $U_{|E|,MC}(\mathbf{X},\mathbf{Y})$ and $U_{|E|,lf}(\mathbf{X}, \mathbf{Y})$  are all closed in $C_u(\mathbf{X},\mathbf{Y})$.
		\item[(iii)] If $E$ is finite, then $U_{|E|,AC}(\mathbf{X},\mathbf{Y})= U_{|E|}(\mathbf{X},\mathbf{Y})$.
	\end{enumerate}
\end{proposition}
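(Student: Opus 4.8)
The plan is to prove the three assertions in order, since each builds on definitions that are essentially nested.

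First I would establish the chain of inclusions in (i). The outer inclusions are immediate from the definitions: a cover with a choice function is in particular a cover, and a cover with a multiple choice function is again just a cover, so $U_{|E|,AC}\subseteq U_{|E|,MC}$ and $U_{|E|,lf}\subseteq U_{|E|}$ follow directly once we unwind Definition \ref{s2d1}. The genuine content is the middle inclusion $U_{|E|,MC}(\mathbf{X},\mathbf{Y})\subseteq U_{|E|,lf}(\mathbf{X},\mathbf{Y})$. Here the hypothesis gives, for each $\varepsilon>0$, a pairwise disjoint clopen cover $\mathcal{A}=\{A_j:j\in J\}$ with $|J|\leq|E|$, admitting a multiple choice function, and with $\text{osc}_{A_j}(f)\leq\varepsilon$. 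Because the $A_j$ are pairwise disjoint \emph{and} clopen, the cover is automatically locally finite: each point $x\in X$ lies in exactly one $A_{j_0}$, which is an open neighborhood of $x$ meeting no other member of the family. So the same cover witnesses membership in $U_{|E|,lf}$, and crucially we do not even need the multiple choice function for this step. I expect the only subtlety is being careful that ``locally finite'' for a pairwise disjoint open cover reduces to ``each member is open,'' which is where clopenness is used.

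For (ii), the plan is to show each of the four sets is closed in $C_u(\mathbf{X},\mathbf{Y})$ by a uniform $\varepsilon/2$ (or $\varepsilon/3$) approximation argument. Fix one of these classes, call it $U_\ast$, and let $f\in\cl_{C_u(\mathbf{X},\mathbf{Y})}(U_\ast)$; given $\varepsilon\in(0,1)$, choose $g\in U_\ast$ with $d_u(f,g)<\varepsilon/3$, so $\rho(f(x),g(x))<\varepsilon/3$ for all $x\in X$ (using $\varepsilon<1$ to drop the $\min$ with $1$). Apply the defining property of $g$ to $\varepsilon/3$ to obtain the appropriate clopen cover $\{A_j:j\in J\}$ with $\text{osc}_{A_j}(g)\leq\varepsilon/3$; then for $x,y\in A_j$ the triangle inequality gives $\rho(f(x),f(y))\leq\rho(f(x),g(x))+\rho(g(x),g(y))+\rho(g(y),f(y))<\varepsilon$, so $\text{osc}_{A_j}(f)\leq\varepsilon$. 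The \emph{same} cover $\{A_j:j\in J\}$ is reused verbatim, so whatever structural feature $U_\ast$ demanded of the cover (pairwise disjointness, a choice or multiple choice function, local finiteness) is inherited without any new choices, and hence $f\in U_\ast$. The main point to be careful about is that no choice principle is invoked: we transport $g$'s witness directly rather than building a new cover, which is essential for a $\mathbf{ZF}$ proof.

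For (iii), suppose $E$ is finite; I would show $U_{|E|}\subseteq U_{|E|,AC}$, the reverse inclusion being part of (i). Given $f\in U_{|E|}(\mathbf{X},\mathbf{Y})$ and $\varepsilon>0$, take the clopen cover $\{A_j:j\in J\}$ with $|J|\leq|E|$ finite and each $\text{osc}_{A_j}(f)\leq\varepsilon$. A finite clopen cover can be disjointified in the standard way: enumerate $J=\{j_1,\dots,j_n\}$ and set $A'_{j_k}=A_{j_k}\setminus(A_{j_1}\cup\cdots\cup A_{j_{k-1}})$, which are pairwise disjoint clopen sets still covering $X$, with $A'_{j_k}\subseteq A_{j_k}$ so $\text{osc}_{A'_{j_k}}(f)\leq\varepsilon$. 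A finite family trivially has both a choice function and a multiple choice function in $\mathbf{ZF}$, so this disjointified cover witnesses $f\in U_{|E|,AC}(\mathbf{X},\mathbf{Y})$. The only thing to watch is that finiteness of $J$ is exactly what makes the disjointification and the existence of a choice function unproblematic in the absence of $\mathbf{AC}$; there is no real obstacle here.
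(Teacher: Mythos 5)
Your proof is correct and takes essentially the same approach as the paper: the paper dismisses (i) and (iii) as trivial and proves (ii) by exactly your $\varepsilon/3$ argument, reusing the approximating function's clopen cover verbatim as the witness for the limit function, so no choice principle is needed. (One cosmetic remark: in (iii), after disjointifying you should discard any empty pieces of the cover so that the resulting finite family actually admits a choice function, which is of course unproblematic in $\mathbf{ZF}$.)
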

\begin{proof}
	It is trivial that (i) and (iii) hold. To prove that $U_{|E|}(\mathbf{X},\mathbf{Y})$ is closed in $C_u(\mathbf{X},\mathbf{Y})$, we consider any function $f\in \cl_{C_u(\mathbf{X}, \mathbf{Y})}(U_{|E|}(\mathbf{X}, \mathbf{Y}))$ and  $\varepsilon\in (0,1]$. There exists $g\in U_{|E|}(\mathbf{X}, \mathbf{Y})$ such that $d_u(f,g)<\frac{\varepsilon}{3}$. There exists a clopen cover $\mathcal{A}=\{A_j: j\in J\}$ of $\mathbf{X}$ such that $|J|\leq |E|$ and, for every $j\in J$, $\text{osc}_{A_j}(g)<\frac{\varepsilon}{3}$. Let $j\in J$ and $x,z\in A_j$. Then $\rho(f(x), f(z))\leq \rho(f(x), g(x))+\rho(g(x),g(z))+\rho(g(z),f(z))\leq \varepsilon$. Hence $f\in U_{|E|}(\mathbf{X}, \mathbf{Y})$. This shows that  $U_{|E|}(\mathbf{X},\mathbf{Y})$ is closed in $C_u(\mathbf{X},\mathbf{Y})$. Using similar arguments, one can check that the sets $U_{|E|,AC}(\mathbf{X}, \mathbf{Y})$, $U_{|E|,MC}(\mathbf{X}, \mathbf{Y})$ and  $U_{|E|,lf}(\mathbf{X}, \mathbf{Y})$ are also closed in $C_u(\mathbf{X},\mathbf{Y})$.
\end{proof}

\begin{proposition}
	\label{s2p3}
	$[\mathbf{ZF}]$  Assume that the space $\mathbf{Y}$ is such that every open cover of $\mathbf{Y}$ has a subcover of size $\leq|E|$. Then  $C(\mathbf{X}, Y_{disc})\subseteq U_{|E|}(\mathbf{X},\mathbf{Y})$. 
\end{proposition}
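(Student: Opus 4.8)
The plan is to fix an arbitrary $f\in C(\mathbf{X}, Y_{disc})$ and verify directly that it meets the defining requirement of $U_{|E|}(\mathbf{X},\mathbf{Y})$. First I would note that $f$ actually belongs to $C(\mathbf{X},\mathbf{Y})$: the discrete topology $\mathcal{P}(Y)$ refines $\tau(\rho)$, so the identity $Y_{disc}\to\langle Y,\tau(\rho)\rangle$ is continuous and hence every member of $C(\mathbf{X},Y_{disc})$ is $\langle\tau,\tau(\rho)\rangle$-continuous. The key feature I would exploit is that, since $f$ is continuous into the \emph{discrete} space, the preimage $f^{-1}[S]$ of every subset $S\subseteq Y$ is open in $\mathbf{X}$; as its complement is the preimage of $Y\setminus S$, it is also closed, so $f^{-1}[S]\in\mathcal{CO}(\mathbf{X})$.

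Now I would fix $\varepsilon>0$ and apply the hypothesis to the open cover $\mathcal{V}=\{B_{\rho}(y,\varepsilon/2): y\in Y\}$ of $\mathbf{Y}$, obtaining a subcover $\mathcal{W}\subseteq\mathcal{V}$ with $|\mathcal{W}|\leq|E|$. For each $W\in\mathcal{W}$ put $A_W=f^{-1}[W]$. Each $A_W$ is clopen by the observation above, and $\{A_W: W\in\mathcal{W}\}$ covers $X$ because $\mathcal{W}$ covers $Y$ and $f$ takes values in $Y$. Moreover every $W\in\mathcal{W}$ is a ball of radius $\varepsilon/2$, so any two of its points are at $\rho$-distance $<\varepsilon$; hence for $x,z\in A_W$ we get $\rho(f(x),f(z))<\varepsilon$ and therefore $\text{osc}_{A_W}(f)\leq\varepsilon$. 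Thus $\{A_W: W\in\mathcal{W}\}$ is a clopen cover of $X$ indexed by a set of size $\leq|E|$ on which $f$ oscillates by at most $\varepsilon$, which is exactly the condition defining $U_{|E|}(\mathbf{X},\mathbf{Y})$.

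There is no serious obstacle here: the entire content is the clopenness of $f^{-1}[W]$, which rests on $f$ being continuous into $Y_{disc}$ and not merely into $\mathbf{Y}$. The only point deserving care in the choiceless setting is to confirm that no hidden use of choice occurs. For each fixed $\varepsilon$, the subcover $\mathcal{W}$ is supplied outright by the hypothesis, and the associated clopen cover is then produced by the single definable assignment $W\mapsto f^{-1}[W]$, so no selection is performed. Because the definition of $U_{|E|}$ quantifies over each $\varepsilon$ separately, there is no need to choose these covers simultaneously over all $\varepsilon$, and the argument is valid in $\mathbf{ZF}$. (The radius $\varepsilon/2$ is chosen precisely so that the oscillation bound comes out $\leq\varepsilon$ rather than $\leq2\varepsilon$.)
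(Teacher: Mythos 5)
Your proof is correct and follows essentially the same route as the paper's: both apply the covering hypothesis to the cover of $\mathbf{Y}$ by sets of diameter at most $\varepsilon$ (your balls $B_{\rho}(y,\varepsilon/2)$), then pull the resulting subcover back through $f$ to get a clopen cover of $X$ of size $\leq|E|$ with oscillation $\leq\varepsilon$. Your extra remarks (why $f\in C(\mathbf{X},\mathbf{Y})$, why preimages are clopen, and why no choice is used) merely make explicit what the paper leaves tacit.
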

\begin{proof}
	To show that $C(\mathbf{X}, Y_{disc})$ is a subset of $U_{|E|}(\mathbf{X},\mathbf{Y})$, let us fix $\varepsilon\in (0,1]$. There exists an open cover $\mathcal{V}$ of $\mathbf{Y}$ such that, for every $V\in\mathcal{V}$, $\delta_{\rho}(V)\leq \varepsilon$ and $|\mathcal{V}|\leq|E|$. Let $f\in C(\mathbf{X}, Y_{disc})$. Then, for every $V\in\mathcal{V}$, the set $f^{-1}[V]$ is clopen in $\mathbf{X}$, and the family $\mathcal{A}=\{ f^{-1}[V]: V\in\mathcal{V}\}$ is a cover of $\mathbf{X}$ such that, for every $V\in\mathcal{V}$, $\text{osc}_{f^{-1}[V]}(f)\leq\varepsilon$. Hence $f\in U_{|E|}(\mathbf{X},\mathbf{Y})$.  
\end{proof}

\begin{proposition}
	\label{s2p4}
	$[\mathbf{ZF}]$ 
	\begin{enumerate} 
		\item[(i)] $U_{|E|, AC}(\mathbf{X},\mathbf{Y})\subseteq\cl_{C_u(\mathbf{X}, \mathbf{Y})}(C(\mathbf{X}, Y_{disc})).$
		\item[(ii)] $U_{|E|, MC}(\mathbf{X})\subseteq\cl_{C_u(\mathbf{X})}(C(\mathbf{X}, \mathbb{R}_{disc})).$
	\end{enumerate}
\end{proposition}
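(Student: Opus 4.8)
The plan is to prove both inclusions by a single template: given a function in the left-hand set and a tolerance $\varepsilon\in(0,1]$, I produce one map $g\in C(\mathbf{X},Y_{disc})$ (resp. $g\in C(\mathbf{X},\mathbb{R}_{disc})$) with $d_u(f,g)\le\varepsilon$. Since the truncated metric $d_u$ is bounded by $1$, letting $\varepsilon$ range over $(0,1]$ then witnesses membership of $f$ in the required closure.

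For (i), fix $f\in U_{|E|,AC}(\mathbf{X},\mathbf{Y})$ and $\varepsilon\in(0,1]$. By definition there is a pairwise disjoint clopen cover $\mathcal{A}=\{A_j:j\in J\}$ of $X$ with $\mathrm{osc}_{A_j}(f)\le\varepsilon$ for every $j$, together with a choice function $c\in\prod_{j\in J}A_j$. Since $\mathcal{A}$ is pairwise disjoint and covers $X$, for each $x\in X$ there is exactly one index $j$ with $x\in A_j$, so I can define $g\colon X\to Y$ by $g(x)=f(c(j))$ for $x\in A_j$. I would then check that $g\in C(\mathbf{X},Y_{disc})$: for any $y\in Y$ the set $g^{-1}[\{y\}]=\bigcup\{A_j: f(c(j))=y\}$ is a union of clopen sets, hence open, and since the singletons generate the discrete topology this yields continuity into $Y_{disc}$. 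Finally, for $x\in A_j$ both $x$ and $c(j)$ lie in $A_j$, so $\rho(f(x),g(x))=\rho(f(x),f(c(j)))\le\mathrm{osc}_{A_j}(f)\le\varepsilon$; taking the supremum over $x$ gives $d_u(f,g)\le\varepsilon$, which is what is needed.

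For (ii), the only change is that a multiple choice function $m$ supplies, for each $j$, a non-empty finite set $m(j)\in[A_j]^{<\omega}$ rather than a single point. Here the restriction to $\mathbf{Y}=\mathbb{R}$ is essential: I use the canonical linear order on $\mathbb{R}$ to collapse the finite set $f[m(j)]$ to one real number, for instance $r_j=\min f[m(j)]$, a choice that is definable in $\mathbf{ZF}$ with no further appeal to choice. Then $g(x)=r_j$ for $x\in A_j$ defines an element of $C(\mathbf{X},\mathbb{R}_{disc})$ exactly as in (i), and since $r_j=f(x_0)$ for some $x_0\in m(j)\subseteq A_j$, the oscillation bound again gives $|f(x)-g(x)|\le\varepsilon$ on each $A_j$, hence $d_u(f,g)\le\varepsilon$.

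I do not anticipate a serious obstacle; the real content is the observation that the (multiple) choice function is precisely the data needed to define $g$ uniformly across the pieces $A_j$ without invoking any choice principle. The single delicate point is the reduction in (ii): for a general metric target there is no $\mathbf{ZF}$-definable way to select a point from an arbitrary finite subset, which is exactly why a multiple choice function suffices only when $\mathbf{Y}=\mathbb{R}$ and why (ii) is stated for $\mathbb{R}$ rather than for an arbitrary $\mathbf{Y}$.
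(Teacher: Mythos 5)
Your proposal is correct and follows essentially the same argument as the paper: take the pairwise disjoint clopen cover witnessing $f\in U_{|E|,AC}$ (resp.\ $U_{|E|,MC}$), use the choice function to define $g(x)=f(c(j))$ on each piece (resp.\ use $\min f[m(j)]$, exploiting the order on $\mathbb{R}$ exactly as the paper does with $h(x)=\min f[a(j)]$), and bound $d_u(f,g)$ by the oscillation. Your added remarks — the explicit continuity check via preimages of singletons, and the observation that the $\min$ trick is precisely why (ii) is restricted to $\mathbb{R}$ — are sound elaborations of steps the paper leaves implicit.
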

\begin{proof}
	Let $f\in U_{|E|, AC}(\mathbf{X}, \mathbf{Y})$ (respectively, $f\in U_{|E|, MC}(\mathbf{X})$) and let $\varepsilon\in (0,1]$. There exists a clopen pairwise disjoint cover $\mathcal{A}=\{A_j: j\in J\}$ of $\mathbf{X}$ such that the family $\mathcal{A}$ has a choice function (respectively, a multiple choice function), $|J|\leq |E|$ and, for every $j\in J$, $\text{osc}_{A_j}(f)\leq \varepsilon$.  Let $a\in\prod\limits_{j\in J}A_j$ (respectively, $a\in\prod\limits_{j\in J}([A_j]^{<\omega}\setminus\{\emptyset\})$. We define a function $h: X\to Y$ by putting, for every $j\in J$, $h(x)=f(a(j))$ (respectively, $h(x)=\min f[a(j)]$) if $x\in A_j$. Then $h\in C(\mathbf{X}, Y_{disc})$ (respectively, $h\in C(\mathbf{X}, \mathbb{R}_{disc})$). To show that $d_u(f, h)\leq\varepsilon$, we fix $x\in X$ and choose $j\in J$ such that $x\in A_j$. Then $\rho(f(x), h(x))\leq \text{osc}_{A_j}(f)\leq\varepsilon$ where $\rho=\rho_e$ if $\langle Y\ \rho\rangle=\langle\mathbb{R},\rho_e\rangle$. Hence $d_u(f,h)\leq\varepsilon$. This completes the proof. 
\end{proof}

\begin{proposition}
	\label{s2p5}
	$[\mathbf{ZF}]$
	\begin{enumerate}
		\item[(i)] Suppose that, for every $\varepsilon\in (0,1]$ and $f\in C(\mathbf{X},\mathbf{Y})$ (respectively, $f\in C(\mathbf{X})$), there exists a partition $\mathcal{V}$ of $f[X]$ such that $|\mathcal{V}|\leq |E|$, the family $\{f^{-1}[V]: V\in\mathcal{V}\}$ has a choice function (respectively, a multiple choice function) and, for every $V\in\mathcal{V}$, $\delta_{\rho}(V)\leq\varepsilon$ (respectively, $\delta_{\rho_e}(V)\leq\varepsilon$). Then 
		$$U_{|E|,AC}(\mathbf{X},\mathbf{Y})=\cl_{C_u(\mathbf{X}, \mathbf{Y})}(C(\mathbf{X},Y_{disc}))$$
		(respectively, $U_{|E|, MC}(\mathbf{X})=\cl_{C_u(\mathbf{X})}(\mathbf{X}, \mathbb{R}_{disc})$).

		\item[(ii)] $U_{\aleph_0}(\mathbf{X})=U_{\aleph_0,lf}(\mathbf{X})$. 
		\item[(iii)] $\mathbf{CMC}$ implies  
		$$U_{\aleph_0, MC}(\mathbf{X})=U_{\aleph_0}(\mathbf{X})=U_{\aleph_0, lf}(\mathbf{X})=\cl_{C_u(\mathbf{X})}(C(\mathbf{X},\mathbb{R}_{disc})).$$
		\item[(iv)] $\mathbf{CAC}$ implies  
		$$U_{\aleph_0, AC}(\mathbf{X})=U_{\aleph_0, MC}(\mathbf{X})=U_{\aleph_0}(\mathbf{X})=U_{\aleph_0, lf}(\mathbf{X})=\cl_{C_u(\mathbf{X})}(C(\mathbf{X},\mathbb{R}_{disc})).$$
	\end{enumerate}
\end{proposition}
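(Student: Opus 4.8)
Proposition \ref{s2p5} — proof plan.

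My plan is to treat the four parts in the order given, since each later part leans on the earlier ones together with the closure-and-inclusion machinery already established in Propositions \ref{s2p2}--\ref{s2p4}. The guiding observation is that all four statements are ``sandwich'' identities: I already have chains of inclusions among the various $U$-sets from Proposition \ref{s2p2}(i), and I already have both directions connecting the $AC$/$MC$-variants with the closure of $C(\mathbf{X},Y_{disc})$ (namely $U_{|E|,AC}\subseteq\cl(\dots)$ from Proposition \ref{s2p4}, and the reverse will come from the hypothesis). So each equality should reduce to verifying the one genuinely new inclusion and then collapsing the rest of the chain.

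For part (i), the nontrivial inclusion is $\cl_{C_u(\mathbf{X},\mathbf{Y})}(C(\mathbf{X},Y_{disc}))\subseteq U_{|E|,AC}(\mathbf{X},\mathbf{Y})$; the reverse is exactly Proposition \ref{s2p4}(i). First I would show $C(\mathbf{X},Y_{disc})\subseteq U_{|E|,AC}(\mathbf{X},\mathbf{Y})$ directly: given $f\in C(\mathbf{X},Y_{disc})$ and $\varepsilon\in(0,1]$, apply the hypothesis of (i) to get a partition $\mathcal{V}$ of $f[X]$ of size $\leq|E|$ whose preimages admit a choice function and satisfy $\delta_\rho(V)\leq\varepsilon$; since $f$ is continuous into the discrete space, each $f^{-1}[V]$ is clopen, so $\{f^{-1}[V]:V\in\mathcal{V}\}$ is a clopen, pairwise disjoint, choice-function-admitting cover with $\mathrm{osc}_{f^{-1}[V]}(f)=\delta_\rho(f[f^{-1}[V]])\leq\varepsilon$, giving $f\in U_{|E|,AC}(\mathbf{X},\mathbf{Y})$. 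Then, because $U_{|E|,AC}(\mathbf{X},\mathbf{Y})$ is closed in $C_u(\mathbf{X},\mathbf{Y})$ by Proposition \ref{s2p2}(ii), taking closures yields $\cl_{C_u(\mathbf{X},\mathbf{Y})}(C(\mathbf{X},Y_{disc}))\subseteq U_{|E|,AC}(\mathbf{X},\mathbf{Y})$, and combining with Proposition \ref{s2p4}(i) gives the equality. The respectively-version for $U_{|E|,MC}(\mathbf{X})$ is identical, using the $MC$ hypothesis, Proposition \ref{s2p4}(ii), and the $\min$-based construction as in Proposition \ref{s2p4}'s proof.

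Part (ii), $U_{\aleph_0}(\mathbf{X})=U_{\aleph_0,lf}(\mathbf{X})$, needs only the inclusion $U_{\aleph_0}(\mathbf{X})\subseteq U_{\aleph_0,lf}(\mathbf{X})$ since the reverse is in Proposition \ref{s2p2}(i). Here the key trick is disjointification: given a countable clopen cover $\{A_n:n\in\omega\}$ with $\mathrm{osc}_{A_n}(f)\leq\varepsilon$, replace it by $B_n=A_n\setminus\bigcup_{k<n}A_k$, which is still clopen (finite Boolean operations, no choice needed since the cover is indexed by $\omega$), pairwise disjoint, and has oscillation $\leq\varepsilon$ on each piece. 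A pairwise disjoint clopen cover is automatically locally finite (each point sits in exactly one $B_n$, whose complement-neighborhood meets no other piece), so $f\in U_{\aleph_0,lf}(\mathbf{X})$. I should note the cover remains indexed by a set of size $\leq\aleph_0$. For part (iii), I would use $\mathbf{CMC}$ to verify the hypothesis of (i)'s $MC$-version: given $f\in C(\mathbf{X})$ and $\varepsilon$, cover $f[X]\subseteq\mathbb{R}$ by the canonical countable family of intervals of length $\leq\varepsilon$ (e.g. $[\,m\varepsilon,(m+1)\varepsilon)$, $m\in\mathbb{Z}$), intersect with $f[X]$ to get a canonically-indexed countable partition $\mathcal{V}$; then $\{f^{-1}[V]:V\in\mathcal{V}\}$ is a countable family of non-empty sets, so $\mathbf{CMC}$ supplies a multiple choice function. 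This hands me $U_{\aleph_0,MC}(\mathbf{X})=\cl_{C_u(\mathbf{X})}(C(\mathbf{X},\mathbb{R}_{disc}))$ from (i); chaining with Proposition \ref{s2p2}(i) ($U_{\aleph_0,MC}\subseteq U_{\aleph_0,lf}\subseteq U_{\aleph_0}$) and Proposition \ref{s2p4}(ii) ($U_{\aleph_0,MC}\subseteq\cl(\dots)$, and $\cl(\dots)\subseteq U_{\aleph_0}$ needs to be closed up too) collapses all four sets. The one subtlety I must check is that $\cl_{C_u(\mathbf{X})}(C(\mathbf{X},\mathbb{R}_{disc}))\subseteq U_{\aleph_0}(\mathbf{X})$: this follows since $C(\mathbf{X},\mathbb{R}_{disc})\subseteq U_{\aleph_0}(\mathbf{X})$ (Proposition \ref{s2p3} with $E=\omega$, as $\mathbb{R}$ is Lindel\"of) and $U_{\aleph_0}(\mathbf{X})$ is closed (Proposition \ref{s2p2}(ii)). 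Part (iv) is the same argument with $\mathbf{CAC}$ replacing $\mathbf{CMC}$, now giving an honest choice function so that the full $AC$-variant enters the chain.

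The main obstacle I anticipate is purely bookkeeping rather than conceptual: making sure every partition/cover I produce is genuinely indexed by a set of size $\leq\aleph_0$ \emph{without} invoking choice to do the indexing, and that the intervals covering $f[X]$ are chosen by an explicit formula (keyed to $\mathbb{Z}$) so that the family $\{f^{-1}[V]\}$ is canonically enumerated before $\mathbf{CMC}$ or $\mathbf{CAC}$ is applied to it. The one place real care is required is confirming that the hypothesis of part (i) is actually met by this canonical interval construction under $\mathbf{CMC}$ (resp. $\mathbf{CAC}$) — i.e. that the preimage family is a countable family of non-empty sets and hence within the scope of those principles — after which parts (iii) and (iv) are immediate corollaries of part (i) combined with the inclusion chains.
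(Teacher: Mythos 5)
Your parts (i) and (ii) are correct and follow essentially the same route as the paper (use the hypothesis to put $C(\mathbf{X},Y_{disc})$ inside $U_{|E|,AC}(\mathbf{X},\mathbf{Y})$, then invoke closedness from Proposition \ref{s2p2}(ii) and the reverse inclusion from Proposition \ref{s2p4}; disjointify countable clopen covers for (ii)). However, parts (iii) and (iv) have a genuine gap: none of the facts you chain together ever produces the inclusion $U_{\aleph_0}(\mathbf{X})\subseteq U_{\aleph_0,MC}(\mathbf{X})$ (respectively, $U_{\aleph_0}(\mathbf{X})\subseteq U_{\aleph_0,AC}(\mathbf{X})$). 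What you assemble is $U_{\aleph_0,MC}(\mathbf{X})=\cl_{C_u(\mathbf{X})}(C(\mathbf{X},\mathbb{R}_{disc}))$ from (i), the inclusions $U_{\aleph_0,MC}(\mathbf{X})\subseteq U_{\aleph_0,lf}(\mathbf{X})\subseteq U_{\aleph_0}(\mathbf{X})$ from Proposition \ref{s2p2}(i), the equality $U_{\aleph_0,lf}(\mathbf{X})=U_{\aleph_0}(\mathbf{X})$ from (ii), and $\cl_{C_u(\mathbf{X})}(C(\mathbf{X},\mathbb{R}_{disc}))\subseteq U_{\aleph_0}(\mathbf{X})$. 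All of these point \emph{into} $U_{\aleph_0}(\mathbf{X})$; nothing forces $U_{\aleph_0}(\mathbf{X})$ back down into $U_{\aleph_0,MC}(\mathbf{X})$, so ``collapses all four sets'' does not follow. Verifying the hypothesis of (i) cannot close this, because for an arbitrary $f\in U_{\aleph_0}(\mathbf{X})\subseteq C(\mathbf{X})$ the canonical-interval preimages $f^{-1}\left[[m\varepsilon,(m+1)\varepsilon)\right]$ need not be clopen, so a multiple choice function on that family says nothing about membership in $U_{\aleph_0,MC}(\mathbf{X})$, whose definition demands a pairwise disjoint \emph{clopen} cover. The missing step---and the place where the paper actually applies $\mathbf{CMC}$ (resp.\ $\mathbf{CAC}$)---is: given $f\in U_{\aleph_0}(\mathbf{X})$ and $\varepsilon>0$, take a countable clopen cover with oscillation $\leq\varepsilon$, disjointify it exactly as in your part (ii), and then apply $\mathbf{CMC}$ (resp.\ $\mathbf{CAC}$) to the resulting countable family of non-empty pairwise disjoint clopen sets to obtain the multiple choice (resp.\ choice) function required. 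This yields $U_{\aleph_0}(\mathbf{X})=U_{\aleph_0,MC}(\mathbf{X})$ (the paper's first assertion in its proof of (iii)), after which your chain does close.

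A secondary flaw: you justify $C(\mathbf{X},\mathbb{R}_{disc})\subseteq U_{\aleph_0}(\mathbf{X})$ by Proposition \ref{s2p3} ``as $\mathbb{R}$ is Lindel\"of''. In $\mathbf{ZF}$ the Lindel\"ofness of $\mathbb{R}$ is not a theorem; it is equivalent to $\mathbf{CAC}(\mathbb{R})$ (the paper itself uses this equivalence in the proof of Theorem \ref{s4t15}(v)), so you may not invoke it in a $\mathbf{ZF}$ argument. The inclusion itself does hold in $\mathbf{ZF}$: what the proof of Proposition \ref{s2p3} really needs is a single countable open cover of $\mathbb{R}$ by sets of diameter $\leq\varepsilon$, and that is given explicitly by intervals indexed by $\mathbb{Z}$, with no choice. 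In the paper's arrangement this inclusion is not needed at all, since $U_{\aleph_0}(\mathbf{X})=U_{\aleph_0,MC}(\mathbf{X})$ together with (i) and (ii) already gives all equalities.
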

\begin{proof}
	(i)  Let $f\in C(\mathbf{X}, Y_{disc})$ (respectively, $f\in C(\mathbf{X}, \mathbb{R}_{disc})$).  Suppose, for an arbitrary $\varepsilon\in(0,1]$, we are given a partition $\mathcal{V}$ of $f[X]$ such that $|\mathcal{V}|\leq |E|$, the family $\mathcal{A}=\{f^{-1}[V]: V\in\mathcal{V}\}$ has a choice function (respectively, a multiple choice function)  and, for every $V\in\mathcal{V}$, $\delta_{\rho}(V)\leq\varepsilon$ (respectively, $\delta_{\rho_e}(V)\leq\varepsilon$). Since every member of $\mathcal{A}$ is clopen in $\mathbf{X}$, we infer that $f\in U_{|E|, AC}(\mathbf{X}, \mathbf{Y})$ (respectively, $f\in U_{|E|, MC}(\mathbf{X})$). To complete the proof of (i),  it suffices to apply Propositions \ref{s2p2}(ii) and \ref{s2p4}.
	
	(ii) It is obvious that $U_{\aleph_0,lf}(\mathbf{X})\subseteq U_{\aleph_0}(\mathbf{X})$. That $U_{\aleph_0}(\mathbf{X})\subseteq U_{\aleph_0,lf}(\mathbf{X})$ follows from the fact that every countable clopen cover of $\mathbf{X}$ has a countable pairwise disjoint clopen refinement. Hence (ii) holds.
	
	For the proof of (iii), we assume $\mathbf{CMC}$. Then $U_{\aleph_0}(\mathbf{X})= U_{\aleph_0,MC}(\mathbf{X})$ and it follows from (i) that $U_{\aleph_0,MC}(\mathbf{X})=\cl_{C_u(\mathbf{X})}(C(\mathbf{X},\mathbb{R}_{disc}))$. This, together with (ii), proves (iii). The proof of (iv) is similar to that of (iii).
\end{proof}

It is interesting that the following theorem holds:

\begin{theorem}
	\label{s2t6}
	$[\mathbf{ZF}]$ 
	\begin{enumerate}
		\item[(i)] $\mathbf{CMC}$ is equivalent to the statement: For every discrete space $\mathbf{X}$, the inclusion $U_{\aleph_0}(\mathbf{X})\subseteq U_{\aleph_0, MC}(\mathbf{X})$ holds.
		\item[(ii)] $\mathbf{CAC}$ is equivalent to the statement: For every discrete space $\mathbf{X}$, the inclusion $U_{\aleph_0}(\mathbf{X})\subseteq U_{\aleph_0, AC}(\mathbf{X})$ holds.
	\end{enumerate}
\end{theorem}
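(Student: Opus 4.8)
The plan is to prove each equivalence by noting that the two ``forward'' implications are immediate from Proposition \ref{s2p5}, and then to put the real work into the two reverse implications, which are entirely parallel.

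For the forward directions there is nothing new to do. Proposition \ref{s2p5}(iii) gives, under $\mathbf{CMC}$, the equality $U_{\aleph_0,MC}(\mathbf{X})=U_{\aleph_0}(\mathbf{X})$ for \emph{every} topological space $\mathbf{X}$, in particular for every discrete space, so the inclusion in (i) holds. Likewise Proposition \ref{s2p5}(iv) yields $U_{\aleph_0,AC}(\mathbf{X})=U_{\aleph_0}(\mathbf{X})$ under $\mathbf{CAC}$, giving the inclusion in (ii).

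For the reverse direction of (i) I would assume the inclusion $U_{\aleph_0}(\mathbf{X})\subseteq U_{\aleph_0,MC}(\mathbf{X})$ for every discrete space $\mathbf{X}$ and derive $\mathbf{CMC}$. Let $\{A_n:n\in\omega\}$ be a denumerable family of non-empty sets. Replacing each $A_n$ by $A_n\times\{n\}$ (a multiple choice function of the new family projects onto one of the old), I may assume the $A_n$ are pairwise disjoint. Put $X=\bigcup_{n}A_n$, let $\mathbf{X}=X_{disc}$, and define $f\colon X\to\mathbb{R}$ by $f(x)=n$ for $x\in A_n$. Since $\mathbf{X}$ is discrete, $f\in C(\mathbf{X})$ and every subset of $X$ is clopen; moreover the partition $\{A_n:n\in\omega\}$ is a countable clopen cover on each member of which $f$ is constant, so $\text{osc}_{A_n}(f)=0$ and hence $f\in U_{\aleph_0}(\mathbf{X})$. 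By hypothesis $f\in U_{\aleph_0,MC}(\mathbf{X})$, so, applying the definition with $\varepsilon=\frac12$, I obtain a pairwise disjoint clopen cover $\mathcal{C}=\{C_j:j\in J\}$ of $X$ with $|J|\leq\aleph_0$ admitting a multiple choice function $m$ and satisfying $\text{osc}_{C_j}(f)\leq\frac12$ for all $j$. Two observations then finish the argument: because distinct values of $f$ differ by at least $1>\frac12$, each $C_j$ lies inside a single block $A_{n(j)}$, and since $\mathcal{C}$ covers $X$, every non-empty block $A_n$ contains at least one member of $\mathcal{C}$; and because $m$ exists, each $C_j$ is non-empty, so $j\mapsto C_j$ is injective and $\mathcal{C}$ is a countable, hence well-orderable, family of sets.

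The main (and essentially only) obstacle is that a block $A_n$ may be cut into infinitely many pieces of $\mathcal{C}$, so I cannot simply take a union; this is exactly where well-orderability of the countable cover is used. Fixing a well-order of $\mathcal{C}$ and letting $C^{(n)}$ be the least member contained in $A_n$, I set $F(n)=m(C^{(n)})\in[C^{(n)}]^{<\omega}\setminus\{\emptyset\}\subseteq[A_n]^{<\omega}\setminus\{\emptyset\}$, and $F$ is a multiple choice function of $\{A_n:n\in\omega\}$, proving $\mathbf{CMC}$. The reverse direction of (ii) runs identically: using $U_{\aleph_0}(\mathbf{X})\subseteq U_{\aleph_0,AC}(\mathbf{X})$ one gets a genuine choice function $c$ on $\mathcal{C}$, and $F(n)=c(C^{(n)})\in C^{(n)}\subseteq A_n$ defines a choice function of $\{A_n:n\in\omega\}$, proving $\mathbf{CAC}$.
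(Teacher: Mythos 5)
Your proof is correct and follows essentially the same route as the paper's: the forward directions are quoted from Proposition \ref{s2p5}(iii)--(iv), and the converses use the very same test function $f(x)=n$ on $A_n$ with $\varepsilon=\tfrac12$, observing that each piece of the resulting cover sits inside a single block, that each block contains a piece, and then selecting the least such piece to transfer the (multiple) choice function to $\{A_n:n\in\omega\}$. The only cosmetic differences are that you make the disjointification $A_n\mapsto A_n\times\{n\}$ explicit (the paper silently starts from a disjoint family) and that you well-order the countable set of pieces $\mathcal{C}$ itself (via injectivity of $j\mapsto C_j$) rather than taking least elements of an index set $J\subseteq\omega$ as the paper does; these are interchangeable.
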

\begin{proof} In view of Proposition \ref{s2p5}(iii), it suffices to show that if, for every discrete space $\mathbf{X}$, the inclusion $U_{\aleph_0}(\mathbf{X})\subseteq U_{\aleph_0, MC}(\mathbf{X})$ (respectively, $U_{\aleph_0}(\mathbf{X})\subseteq U_{\aleph_0, AC}(\mathbf{X})$) holds, then $\mathbf{CMC}$ (respectively, $\mathbf{CAC}$) is true. To this end we fix a disjoint family $\mathcal{A}=\{A_n: n\in\omega\}$ of non-empty sets. We put $X=\bigcup\mathcal{A}$ and define a function $f: X\to\mathbb{R}$ as follows: for every $n\in\omega$ and every $x\in A_n$, $f(x)=n$. Then $f\in U_{\aleph_0}(X_{disc})$. Suppose that $f\in U_{\aleph_0,MC}(X_{disc})$ (respectively, $f\in U_{\aleph_0, AC}(X_{disc})$). Then there exist a set $J\subseteq\omega$ and a cover $\mathcal{V}=\{V_j: j\in J\}$ of $X$, such that $\mathcal{V}$ has a multiple choice function  (respectively, a choice function) and, for every $j\in J$, $\text{osc}_{V_j}(f)\leq\frac{1}{2}$. Then, for every $j\in J$, there exists a unique $n(j)\in\omega$ such that $V_j\subseteq A_{n(j)}$. Furthermore, for every $n\in\omega$, the set $J(n)=\{j\in J: V_j\subseteq A_n\}$ is non-empty. For every $n\in\omega$, let $j(n)=\min J(n)$. Let $\psi$ be a multiple choice function (respectively, a choice function) of $\mathcal{V}$. Then, for every $n\in\omega$, we can define $\phi(n)=\psi(j(n))$ to obtain a multiple choice function (respectively, a choice function) of $\mathcal{A}$. 
\end{proof}

\begin{proposition} 
	\label{s2p7}
	$[\mathbf{ZF}]$ Let $E$ be an infinite well-orderable set. If $X$ is well-orderable, then  $U_{|E|,lf}(\mathbf{X}, \mathbf{Y})=U_{|E|, AC}(\mathbf{X}, \mathbf{Y})$.
\end{proposition}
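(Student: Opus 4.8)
The plan is to start from Proposition~\ref{s2p2}(i), which gives the inclusion $U_{|E|,AC}(\mathbf{X},\mathbf{Y})\subseteq U_{|E|,lf}(\mathbf{X},\mathbf{Y})$ with no choice assumptions whatsoever, so the entire task reduces to proving the reverse inclusion $U_{|E|,lf}(\mathbf{X},\mathbf{Y})\subseteq U_{|E|,AC}(\mathbf{X},\mathbf{Y})$ under the hypotheses that $E$ is an infinite well-orderable set and $X$ is well-orderable. I would fix an arbitrary $f\in U_{|E|,lf}(\mathbf{X},\mathbf{Y})$ together with $\varepsilon>0$, and aim to produce a pairwise disjoint clopen cover of $\mathbf{X}$ of size $\leq|E|$, with all oscillations $\leq\varepsilon$, that admits a choice function. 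By the definition of $U_{|E|,lf}$, there is a locally finite clopen cover $\{A_j:j\in J\}$ of $X$ with $|J|\leq|E|$ and $\mathrm{osc}_{A_j}(f)\leq\varepsilon$ for every $j\in J$.

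Since $|J|\leq|E|$ and $E$ is well-orderable, the index set $J$ is well-orderable; I would fix a well-ordering $<$ of $J$ and disjointify in the standard way by setting $B_j=A_j\setminus\bigcup_{i<j}A_i$ for each $j\in J$. The family $\{B_j:j\in J\}$ is then pairwise disjoint, still covers $X$, and satisfies $B_j\subseteq A_j$, whence $\mathrm{osc}_{B_j}(f)\leq\mathrm{osc}_{A_j}(f)\leq\varepsilon$. The crucial point, which I expect to be the heart of the argument, is that each $B_j$ is clopen. Each $A_i$ is closed, and the subfamily $\{A_i:i<j\}$ inherits local finiteness from $\{A_j:j\in J\}$; since a union of a locally finite family of closed sets is closed in $\mathbf{ZF}$ (given a point $x$ outside the union, a neighborhood of $x$ meeting only finitely many members can be shrunk by removing those finitely many closed sets, none of which contains $x$, to obtain a neighborhood disjoint from the union), the set $\bigcup_{i<j}A_i$ is closed, and being a union of open sets it is also open, hence clopen. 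Therefore $B_j=A_j\cap(X\setminus\bigcup_{i<j}A_i)$ is the intersection of two clopen sets and so is itself clopen.

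It then remains only to discard the empty members and to supply a choice function. I would set $J'=\{j\in J:B_j\neq\emptyset\}$, so that $\{B_j:j\in J'\}$ is a pairwise disjoint clopen cover of $X$ with $|J'|\leq|J|\leq|E|$ and oscillations $\leq\varepsilon$. This is precisely where the hypothesis that $X$ is well-orderable enters: fixing a well-ordering of $X$, the map sending each $j\in J'$ to the least element of $B_j$ is a choice function of $\{B_j:j\in J'\}$. This exhibits $f\in U_{|E|,AC}(\mathbf{X},\mathbf{Y})$ and establishes the reverse inclusion. I do not anticipate any difficulty beyond the local-finiteness argument for clopenness; the two well-orderability hypotheses are exactly what the proof consumes, one to linearly order $J$ so that the transfinite disjointification makes sense, and the other to select points from the resulting disjoint clopen pieces.
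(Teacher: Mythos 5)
Your proof is correct and follows essentially the same route as the paper's: both reduce to the inclusion $U_{|E|,lf}(\mathbf{X},\mathbf{Y})\subseteq U_{|E|,AC}(\mathbf{X},\mathbf{Y})$ via Proposition~\ref{s2p2}(i), disjointify the locally finite clopen cover along a well-ordering of $J$, and use a well-ordering of $X$ to produce the choice function on the non-empty pieces. In fact you spell out the one step the paper leaves implicit, namely that local finiteness makes each union $\bigcup_{i<j}A_i$ closed (hence clopen) in $\mathbf{ZF}$, so that the disjointified sets $B_j$ are indeed clopen.
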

\begin{proof}
	In view of Proposition \ref{s2p2}(i), it suffices to check that $U_{|E|,lf}(\mathbf{X}, \mathbf{Y})\subseteq U_{|E|, AC}(\mathbf{X}, \mathbf{Y})$. Suppose that $f\in U_{|E|,lf}(\mathbf{X}, \mathbf{Y})$. We fix a real number $\varepsilon>0$. Let $\mathcal{A}=\{A_j: j\in J\}$ be a locally finite clopen cover of $\mathbf{X}$ such that $|J|\leq|E|$ and, for every $j\in J$, $\text{osc}_{A_j}(f)\leq\varepsilon$. Let us fix a well-ordering $\leq_J$ of $J$. Let $j_0$ be the first element of $\langle J, \leq_J\rangle$. Put $B_{j_0}=A_{j_0}$ and, for every $j\in J\setminus\{j_0\}$, let $B_j=A_j\setminus\bigcup\{A_k: k\in J\text{ and } k<_J j\}$. The collection $\{B_j: j\in J\}$ is a clopen cover of $\mathbf{X}$ such that, for every $j\in J$, $\text{osc}_{B_j}(f)\leq\varepsilon$. Let $J^{\ast}=\{j\in J: B_j\neq\emptyset\}$. Then $|J^{\ast}|\leq |E|$. Assuming that $X$ is well-orderable, to show that $f\in U_{|E|, AC}(\mathbf{X},\mathbf{Y})$, it suffices to notice that $\{B_j: j\in J^{\ast}\}$ has a choice function.  
\end{proof}

\begin{proposition}
	\label{s2p8} 
	$[\mathbf{ZF}]$ Let $E$ be an infinite well-orderable set. If  $X$ is well-orderable, then:
	$$U_{|E|, AC}(\mathbf{X})=U_{|E|,lf}(\mathbf{X})=U_{\aleph_0, AC}(\mathbf{X})=U_{\aleph_0,lf}(\mathbf{X})=\cl_{C_u(\mathbf{X})}(C(\mathbf{X},\mathbb{R}_{disc})).$$
\end{proposition}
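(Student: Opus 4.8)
The plan is to collapse all five sets onto $\cl_{C_u(\mathbf{X})}(C(\mathbf{X},\mathbb{R}_{disc}))$ by applying Proposition \ref{s2p5}(i) twice, once with the given $E$ and once with $E=\omega$, and then to identify the locally finite variants with the $AC$-variants through Proposition \ref{s2p7}. Since Proposition \ref{s2p5}(i) already bundles the closedness from Proposition \ref{s2p2}(ii) and the inclusion from Proposition \ref{s2p4}(i), these two earlier results are the only machinery I need.

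First I would verify the hypothesis of Proposition \ref{s2p5}(i) in its $AC$ (non-``respectively'') form with $\mathbf{Y}=\mathbb{R}$. Fix $\varepsilon\in(0,1]$ and $f\in C(\mathbf{X})$, and let $\mathcal{V}$ consist of the non-empty members of the family $\{f[X]\cap[k\varepsilon,(k+1)\varepsilon): k\in\mathbb{Z}\}$. This is a countable partition of $f[X]$ with $\delta_{\rho_e}(V)\leq\varepsilon$ for each $V\in\mathcal{V}$. Because $E$ is infinite and well-orderable we have $\aleph_0\leq|E|$, hence $|\mathcal{V}|\leq|E|$; and because $X$ is well-orderable, fixing a well-ordering $\leq_X$ of $X$ and sending each $V\in\mathcal{V}$ to the $\leq_X$-least element of $f^{-1}[V]$ gives a choice function of $\{f^{-1}[V]:V\in\mathcal{V}\}$. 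Thus the hypothesis holds, and Proposition \ref{s2p5}(i) yields
$$U_{|E|,AC}(\mathbf{X})=\cl_{C_u(\mathbf{X})}(C(\mathbf{X},\mathbb{R}_{disc})).$$
Running the identical argument with $E$ replaced by $\omega$ (which is also infinite and well-orderable) gives
$$U_{\aleph_0,AC}(\mathbf{X})=\cl_{C_u(\mathbf{X})}(C(\mathbf{X},\mathbb{R}_{disc})),$$
so in particular $U_{|E|,AC}(\mathbf{X})=U_{\aleph_0,AC}(\mathbf{X})$.

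It remains to bring in the $lf$-variants. Since $E$ and $\omega$ are infinite well-orderable sets and $X$ is well-orderable, Proposition \ref{s2p7} (with $\mathbf{Y}=\mathbb{R}$), applied to $E$ and to $\omega$ in turn, gives $U_{|E|,lf}(\mathbf{X})=U_{|E|,AC}(\mathbf{X})$ and $U_{\aleph_0,lf}(\mathbf{X})=U_{\aleph_0,AC}(\mathbf{X})$. Chaining these with the two displayed equalities produces the full string $U_{|E|,AC}(\mathbf{X})=U_{|E|,lf}(\mathbf{X})=U_{\aleph_0,AC}(\mathbf{X})=U_{\aleph_0,lf}(\mathbf{X})=\cl_{C_u(\mathbf{X})}(C(\mathbf{X},\mathbb{R}_{disc}))$, which is the assertion.

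I do not anticipate a genuine obstacle; the proof is a careful bookkeeping of the earlier propositions. The three points demanding attention are that the partition must be taken of $f[X]$ rather than of $\mathbb{R}$, so that every block is non-empty and the choice function is well defined; that the choice function is furnished by the well-ordering of $X$ and not by any fragment of choice, which is exactly what lets the argument run in $\mathbf{ZF}$; and that the cardinality bound $|\mathcal{V}|\leq|E|$ rests on $\aleph_0\leq|E|$, the sole place where the infinitude of $E$ is used. One should also take care to invoke Proposition \ref{s2p5}(i) in its $AC$ form with target $\mathbb{R}$, so that its conclusion is phrased in terms of $U_{|E|,AC}$ and $\cl_{C_u(\mathbf{X})}(C(\mathbf{X},\mathbb{R}_{disc}))$ rather than the $MC$ form.
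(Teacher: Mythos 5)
Your proof is correct and takes essentially the same route as the paper, whose entire proof is the citation ``This follows directly from Proposition \ref{s2p5} and \ref{s2p7}.'' What you wrote is precisely the filling-in of that citation: verifying the hypothesis of Proposition \ref{s2p5}(i) (once for $E$, once for $\omega$) via the interval partition of $f[X]$ together with the choice function supplied by a well-ordering of $X$, and then invoking Proposition \ref{s2p7} twice to identify the $lf$-variants with the $AC$-variants.
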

\begin{proof}
	This follows directly from Proposition \ref{s2p5} and \ref{s2p7}.
\end{proof}

For some of our forthcoming results,  we need the following theorem:

\begin{theorem}
	\label{s2t9}
	$[\mathbf{ZF}]$
	\begin{enumerate}
		\item[(i)] $\mathbf{A}(\sigma\sigma)\leftrightarrow\mathbf{A}(\delta\delta)$.
		\item[(ii)] $\mathbf{CMC}\rightarrow \mathbf{A}(\sigma\sigma)$.
		\item[(iii)] $\mathbf{A}(\sigma\sigma)\rightarrow\mathbf{CMC}_{\omega}$.
	\end{enumerate}
\end{theorem}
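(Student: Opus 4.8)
The three parts ask us to relate the forms $\mathbf{A}(\sigma\sigma)$, $\mathbf{A}(\delta\delta)$, $\mathbf{CMC}$, and $\mathbf{CMC}_{\omega}$. I will treat them in the order (i), (ii), (iii), since (i) lets me phrase the remaining parts in whichever of the two dual formulations is more convenient.

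\emph{Part (i).} The plan is to prove $\mathbf{A}(\sigma\sigma)\leftrightarrow\mathbf{A}(\delta\delta)$ by exploiting a De Morgan duality between the families. Given a family $\mathcal{A}$ of subsets of $X$ that is stable under finite unions and finite intersections, consider the family $\mathcal{A}^c=\{X\setminus A: A\in\mathcal{A}\}$, which is then also stable under finite unions and finite intersections. For a countable subfamily $\mathcal{U}_n\subseteq\mathcal{A}$, the complements $\{X\setminus U: U\in\mathcal{U}_n\}$ form a countable subfamily of $\mathcal{A}^c$, and $\bigcup\mathcal{U}_n$ is the complement of $\bigcap\{X\setminus U: U\in\mathcal{U}_n\}$. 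Thus a decomposition $\bigcap(\text{intersections in }\mathcal{A}^c)=\bigcap_m W_m$ witnessing $\mathbf{A}(\delta\delta)$ for $\mathcal{A}^c$ translates by complementation into a decomposition $\bigcup\mathcal{U}_n=\bigcup_m(X\setminus W_m)$ witnessing $\mathbf{A}(\sigma\sigma)$ for $\mathcal{A}$, and symmetrically. The only point to check is that the required witnessing family $\{V_{n,m}\}$ is produced uniformly in $n$ and $m$ (no choice is smuggled in), which is immediate since complementation is a definable bijection between $\mathcal{A}$ and $\mathcal{A}^c$.

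\emph{Part (ii).} To show $\mathbf{CMC}\rightarrow\mathbf{A}(\sigma\sigma)$, fix $X$, a family $\mathcal{A}$ stable under finite unions and finite intersections, and countable subfamilies $\{\mathcal{U}_n: n\in\omega\}$. For each $n$, enumerate $\mathcal{U}_n=\{U_{n,k}: k\in\omega\}$ (this uses a choice of enumerations, but the set being enumerated is countable and we can apply $\mathbf{CMC}$ to the family of sets of surjections $\omega\to\mathcal{U}_n$; more carefully, I would apply $\mathbf{CMC}$ to choose for each $n$ a finite set of enumerations and then combine them). Given an enumeration, set $V_{n,m}=\bigcup_{k\leq m}U_{n,k}$, which lies in $\mathcal{A}$ by stability under finite unions, and clearly $\bigcup\mathcal{U}_n=\bigcup_m V_{n,m}$. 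The subtlety is that $\mathbf{CMC}$ yields only a multiple choice, i.e.\ for each $n$ a nonempty finite set $\Phi_n$ of enumerations of $\mathcal{U}_n$ rather than a single one; to finish I would take the finitely many resulting partial-union families and amalgamate them (for instance by re-indexing the finite product over $\Phi_n$), so that a single doubly-indexed family $\{V_{n,m}\}$ still witnesses $\mathbf{A}(\sigma\sigma)$. \textbf{This amalgamation is the main obstacle}: one must arrange the combination so that it remains choice-free and still produces exactly the union $\bigcup\mathcal{U}_n$ for each $n$.

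\emph{Part (iii).} To derive $\mathbf{CMC}_{\omega}$ from $\mathbf{A}(\sigma\sigma)$, I would encode a given denumerable family of denumerable sets as countable subfamilies of a suitable closure-stable family and read a multiple choice function off the witnessing decomposition. Concretely, given denumerable pairwise-disjoint sets $\{D_n: n\in\omega\}$, put $X=\bigcup_n D_n$ and let $\mathcal{A}$ be the algebra generated by the singletons together with suitable ``tails,'' arranged so that $\mathcal{A}$ is stable under finite unions and intersections and so that the $\mathbf{A}(\sigma\sigma)$-witness for the subfamilies $\mathcal{U}_n$ associated to $D_n$ is forced to isolate, for each $n$, a nonempty finite subset of $D_n$. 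Comparing the prescribed union $\bigcup\mathcal{U}_n=D_n$ with its representation $\bigcup_m V_{n,m}$ as a countable union of members of $\mathcal{A}$, the first $V_{n,m}$ that is nonempty must be a finite subset of $D_n$, and the map $n\mapsto$ (this finite set) is a multiple choice function, definable uniformly from the witness. The delicate step here is choosing the family $\mathcal{A}$ so that members of $\mathcal{A}$ meeting $D_n$ are \emph{finite} (forcing finiteness of the extracted sets), which is exactly what converts the union-decomposition into a legitimate multiple choice.
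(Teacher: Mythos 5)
Your part (i) is exactly the paper's proof: complementation turns $\mathcal{A}$, $\mathcal{U}_n$ into $\mathcal{A}^c$, $\mathcal{U}_n^c$ and interchanges the two forms, uniformly in $n,m$; nothing to add. Your part (iii) also has the paper's core idea (extract, from the witnessing decomposition $\bigcup_{m}V_{n,m}=H_n$, the first non-empty $V_{n,m}$ as the value of a multiple choice function), but note that no ``tails'' and no algebra-generation are needed: the paper simply takes $\mathcal{A}=[X]^{<\omega}$ and $\mathcal{U}_n=[H_n]^{<\omega}$. The point you were worried about is automatic: since $\bigcup_{m\in\omega}V_{n,m}=\bigcup\mathcal{U}_n=H_n$, every witness satisfies $V_{n,m}\subseteq H_n$, so it suffices that \emph{all} members of $\mathcal{A}$ be finite; then $F_n=V_{n,m(n)}$ with $m(n)=\min\{m\in\omega: V_{n,m}\neq\emptyset\}$ works. (Be careful with the word ``algebra'': closing under complements would admit cofinite, hence infinite, sets. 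Also, your pairwise-disjointness hypothesis is not in $\mathbf{CMC}_{\omega}$; it is harmless, since one can pass to $H_n\times\{n\}$ choice-freely, but it is simply unnecessary here.)

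The genuine gap is in part (ii), and it is the step you yourself flagged: how to amalgamate the $\mathbf{CMC}$-chosen finite set $\Phi_n$ of enumerations into a single witnessing family. Your suggested repair, ``re-indexing the finite product over $\Phi_n$'' (interleaving the finitely many partial-union sequences into one $\omega$-sequence), does not go through in $\mathbf{ZF}$: interleaving presupposes an enumeration of each finite set $\Phi_n$, chosen uniformly for all $n\in\omega$, and choosing linear orderings of infinitely many finite sets is itself a form of choice not available in $\mathbf{ZF}$. The correct amalgamation---which is the crux of the paper's proof---uses the stability of $\mathcal{A}$ under finite unions a second time, termwise rather than by re-indexing: for each $n,m\in\omega$ put
\[
V_{n,m}=\bigcup\Bigl\{\,\bigcup_{j\leq m}\phi(j)\;:\;\phi\in\Phi_n\Bigr\}.
\]
Each $V_{n,m}$ is a finite union of members of $\mathcal{A}$, hence lies in $\mathcal{A}$; it is defined uniformly from $\Phi_n$ with no further choices; every $V_{n,m}$ is contained in $\bigcup\mathcal{U}_n$; and $\bigcup_{m\in\omega}V_{n,m}=\bigcup\mathcal{U}_n$ already because any single $\phi\in\Phi_n$ satisfies $\bigcup_{m\in\omega}\bigcup_{j\leq m}\phi(j)=\bigcup\mathcal{U}_n$. (The paper packages the same move slightly differently: assuming w.l.o.g.\ each $\mathcal{U}_n$ stable under finite unions, it applies $\mathbf{CMC}$ to the sets $\mathcal{D}_n$ of increasing sequences in $\mathcal{U}_n$ with union $\bigcup\mathcal{U}_n$, obtaining finite $\mathcal{E}_n\subseteq\mathcal{D}_n$, and then takes the union of the $m$-th terms over $\mathcal{E}_n$.) With this one line your outline of (ii) closes, and the whole proposal coincides with the paper's argument.
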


\begin{proof}
	Let $\mathcal{A}$ be a family of subsets of a set $X$ such that $\mathcal{A}$ is stable under finite unions and finite intersections. Let $\{\mathcal{U}_n: n\in\omega\}$ be a family of non-empty countable subfamilies of $\mathcal{A}$. 
	
	(i) To see that (i) holds, we consider the family $\mathcal{A}^{c}=\{X\setminus A: A\in\mathcal{A}\}$ and, for every $n\in\omega$, the family $\mathcal{U}_n^{c}=\{X\setminus U: U\in\mathcal{U}_n\}$. Suppose that $\{V_{n,m}: n,m\in\omega\}$ is a family of members of $\mathcal{A}$ (respectively, $\mathcal{A}^c$) such that, for every $n\in\omega$, $\bigcup\mathcal{U}_n=\bigcup\limits_{m\in\omega}V_{n,m}$ (respectively, $\bigcap\mathcal{U}_n^c=\bigcap\limits_{m\in\omega}V_{n,m}$). Then, for every $n\in\omega$, $\bigcap\mathcal{U}_n^c=\bigcap\limits_{m\in\omega}(X\setminus V_{n,m})$ (respectively, $\bigcup\mathcal{U}_n=\bigcup\limits_{m\in\omega}(X\setminus V_{n,m})$).\medskip
	
	(ii)  Without loss of generality, we may assume that, for every $n\in\omega$, the family $\mathcal{U}_n$ is stable under finite unions. For every $n\in\omega$, let $\mathcal{D}_n$ be the family of all sequences $(V_m)_{m\in\omega}$ of members of $\mathcal{U}_n$ such that $\bigcup\mathcal{U}_n=\bigcup\limits_{m\in\omega}V_m$ and, for every $m\in\omega$,  $V_m\subseteq V_{m+1}$. Clearly, for every $n\in\omega$, $\mathcal{D}_n\neq\emptyset$. Assuming $\mathbf{CMC}$, we fix a family $\{\mathcal{E}_n: n\in\omega\}$ of non-empty finite sets such that, for every $n\in\omega$, $\mathcal{E}_n\subseteq\mathcal{D}_n$. Let us fix $n\in\omega$. For $\mathcal{P}\in\mathcal{E}_n$, let $\mathcal{P}=(V(\mathcal{P},n)_m)_{m\in\omega}$. We define $V_{n,m}=\bigcup\{V(\mathcal{P},n)_m: \mathcal{P}\in\mathcal{E}_n\}$ to see that (ii) holds. 
	
	(iii) Now, we assume $\mathbf{A}(\sigma\sigma)$ and consider any family $\mathcal{H}=\{H_n: n\in\omega\}$ of non-empty countable sets. We describe a way of choosing a family $\{F_n: n\in\omega\}$ such that, for every $n\in\omega$, $F_n$ is a non-empty finite subset of $H_n$. We put $X=\bigcup\mathcal{H}$ and $\mathcal{A}=[X]^{<\omega}$. For every $n\in\omega$, let $\mathcal{U}_n=[H_n]^{<\omega}$. Then, for every $n\in\omega$, the family $\mathcal{U}_n$ is a countable subfamily of $\mathcal{A}$, and $H_n=\bigcup\mathcal{U}_n$. Assuming $\mathcal{A}(\sigma\sigma)$, we can fix a family $\{V_{n,m}: n,m\in\omega\}$ of members of $\mathcal{A}$ such that, foe every $n\in\omega$, $H_n=\bigcup\limits_{m\in\omega}V_{n,m}$. To complete the proof, for every $n\in\omega$, we define $m(n)=\min\{m\in\omega: V_{n,m}\neq\emptyset\}$ and $F_n=V_{n,m(n)}$. 
\end{proof}

\begin{theorem}
	\label{s2t10}
	$[\mathbf{ZF}]$ $\mathbf{CMC}$ implies the following:
	\begin{enumerate}
		\item[(i)] for every $f\in U_{\aleph_0}(\mathbf{X})$, the zero-set $Z(f)$ is a countable intersection of clopen sets of $\mathbf{X}$; 
		\item[(ii)] $A(\mathbf{X})=U_{\aleph_0, MC}(\mathbf{X})=U_{\aleph_0, lf}(\mathbf{X})=U_{\aleph_0}(\mathbf{X})=\cl_{C_u(\mathbf{X})}(C(\mathbf{X},\mathbb{R}_{disc}))$.
	\end{enumerate}
\end{theorem}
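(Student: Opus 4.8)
The plan is to prove both parts under $\mathbf{CMC}$, leaning heavily on the machinery already developed in Section \ref{s2}. For part (ii), the chain of equalities $U_{\aleph_0, MC}(\mathbf{X})=U_{\aleph_0, lf}(\mathbf{X})=U_{\aleph_0}(\mathbf{X})=\cl_{C_u(\mathbf{X})}(C(\mathbf{X},\mathbb{R}_{disc}))$ is already furnished by Proposition \ref{s2p5}(iii), so the genuinely new content is the identification of $A(\mathbf{X})$ with this common set. I would prove $A(\mathbf{X})=U_{\aleph_0}(\mathbf{X})$ by showing both inclusions. For $U_{\aleph_0}(\mathbf{X})\subseteq A(\mathbf{X})$: given $f\in U_{\aleph_0}(\mathbf{X})$ and an open $O\in\tau_{nat}$, I would write $O$ as a countable union of open intervals and, using the $\varepsilon$-fine countable clopen covers guaranteed by membership in $U_{\aleph_0}(\mathbf{X})$, express $f^{-1}[O]$ as a countable union of clopen sets; here $\mathbf{CMC}$ is needed to simultaneously select the relevant covers across the countably many intervals. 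For the reverse inclusion $A(\mathbf{X})\subseteq U_{\aleph_0}(\mathbf{X})$: given $f\in A(\mathbf{X})$ and $\varepsilon>0$, I would cover $\mathbb{R}$ by countably many open intervals of length $\leq\varepsilon$, pull each back via $f$ to a countable union of clopen sets (by definition of $A(\mathbf{X})$), and collect all these clopen pieces into a single countable clopen cover of $\mathbf{X}$ with oscillation $\leq\varepsilon$ on each piece — again $\mathbf{CMC}$ (or rather the countable-union bookkeeping it supports) is what lets me amalgamate the countably many countable families into one countable family.

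For part (i), I would start from $f\in U_{\aleph_0}(\mathbf{X})$ and aim to write $Z(f)=f^{-1}[\{0\}]$ as $\bigcap_{n}f^{-1}[(-\tfrac1n,\tfrac1n)]$, which is immediate; the work is to show each $f^{-1}[(-\tfrac1n,\tfrac1n)]$, or rather a suitable clopen set sandwiched appropriately, is a clopen set (or that $Z(f)$ is a countable intersection of clopen sets directly). The cleaner route is to invoke part (ii): since $f\in U_{\aleph_0}(\mathbf{X})=A(\mathbf{X})$, the cozero set $X\setminus Z(f)=f^{-1}[\mathbb{R}\setminus\{0\}]$ is a countable union of clopen sets because $\mathbb{R}\setminus\{0\}$ is open in $\tau_{nat}$; taking complements shows $Z(f)$ is a countable intersection of clopen sets, i.e.\ $Z(f)\in\mathcal{CO}_{\delta}(\mathbf{X})$. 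Thus (i) follows formally from (ii), so I would prove (ii) first and then derive (i) in one line, or prove (i) directly as a warm-up for the $A(\mathbf{X})\subseteq U_{\aleph_0}(\mathbf{X})$ argument.

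I expect the main obstacle to be the careful use of $\mathbf{CMC}$ in the inclusion $A(\mathbf{X})\subseteq U_{\aleph_0}(\mathbf{X})$ and in assembling the countable clopen covers with controlled oscillation. The difficulty is that the definition of $A(\mathbf{X})$ only asserts that \emph{each} preimage $f^{-1}[O]$ is \emph{some} countable union of clopen sets, without giving a uniform or canonical such representation; to build a single countable clopen cover of $\mathbf{X}$ refining the $\varepsilon$-net on $\mathbb{R}$, I must choose, for each of the countably many intervals, an enumeration of an appropriate countable clopen family, and then diagonalize. In $\mathbf{ZF}$ alone these choices may fail to cohere, but $\mathbf{CMC}$ — together with the fact (used via Theorem \ref{s2t9} and Proposition \ref{s2p5}) that countable unions of the relevant families can be organized into increasing sequences — supplies exactly the multiple-choice selection needed to pin down a genuine countable clopen cover. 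Once this amalgamation is executed, verifying $\operatorname{osc}_A(f)\leq\varepsilon$ on each resulting clopen piece is routine from the triangle inequality and the diameter bound on the intervals.
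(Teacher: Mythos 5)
Your proposal is correct, and it genuinely reorganizes the paper's argument rather than reproducing it. The paper proves (i) first, by a self-contained trick that avoids amalgamating countable families altogether: since $f\in\cl_{C_u(\mathbf{X})}(C(\mathbf{X},\mathbb{R}_{disc}))$ by Proposition \ref{s2p5}(iii), $\mathbf{CMC}$ selects for each $n$ a finite set $H_n$ of locally constant $\tfrac{1}{n}$-approximants of $f$, whose pointwise minimum $f_n(x)=\min\{h(x):h\in H_n\}$ is canonical; the single clopen set $U_n=\{x\in X:|f_n(x)|\leq\tfrac{1}{n}\}$ per scale gives $Z(f)=\bigcap_{n}U_n$. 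The paper then derives $U_{\aleph_0}(\mathbf{X})\subseteq A(\mathbf{X})$ \emph{from} (i), applied to the truncations $(f-a)\vee 0$ and $(f-b)\wedge 0$, which exhibit $f^{-1}[(a,b)]$ as a countable union of clopen sets, and finishes with the Theorem \ref{s2t9}(ii) amalgamation over a countable family of intervals. You invert this: you prove $A(\mathbf{X})=U_{\aleph_0}(\mathbf{X})$ with no appeal to (i) — the inclusion $U_{\aleph_0}(\mathbf{X})\subseteq A(\mathbf{X})$ done directly from fine covers — and then (i) falls out in one line by taking $O=\mathbb{R}\setminus\{0\}$ and complementing; moreover, in the direction $A(\mathbf{X})\subseteq U_{\aleph_0}(\mathbf{X})$ you conclude straight from the definition of $U_{\aleph_0}(\mathbf{X})$, where the paper detours through a locally constant $\varepsilon$-approximant $h$ and the closure characterization. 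Two points need care for your route to be airtight. First, in the fine-cover argument the $\mathbf{CMC}$-selection must run over oscillation scales, not over the intervals decomposing $O$: you need, for every $n\in\mathbb{N}$, a countable clopen cover $\mathcal{B}_n$ of mesh $\leq\tfrac{1}{n}$ (take the union, or common refinement, of the finite set of such covers that $\mathbf{CMC}$ hands you — a canonical step), because $f^{-1}[O]=\bigcup_{n}\bigcup\{B\in\mathcal{B}_n : f[B]\subseteq O\}$ requires arbitrarily fine pieces near the preimages of the boundary of $O$; a single cover of mesh comparable to an interval's length would not capture that interval's preimage. Second, $\mathbf{CMC}$ alone does not make $\bigcup_{n}\mathcal{B}_n$ a countable family (a finite, unordered set of enumerations cannot be merged canonically into one), so the last step must indeed pass through Theorem \ref{s2t9}(ii) applied to $\mathcal{A}=\mathcal{CO}(\mathbf{X})$ and the families $\mathcal{U}_n=\{B\in\mathcal{B}_n: f[B]\subseteq O\}\cup\{\emptyset\}$; your closing paragraph shows you intend exactly this, so the issue is one of exposition, not of substance. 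The trade-off between the two routes: the paper's order keeps (i) independent of the heavier $\mathbf{A}(\sigma\sigma)$ machinery (and (i) is cited on its own elsewhere, e.g.\ in Proposition \ref{s3p2}(iii)), while yours leans on that machinery in both inclusions but gets (i) for free.
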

\begin{proof} To prove that (i) holds, we assume $\mathbf{CMC}$ and fix $f\in U_{\aleph_0}(\mathbf{X})$. By Proposition \ref{s2p5}(iii), $f\in \cl_{C_u(\mathbf{X})}(C(\mathbf{X},\mathbb{R}_{disc}))$. For every $n\in\mathbb{N}$, let $V_n=\{g\in C(\mathbf{X}, \mathbb{R}_{disc}): (\forall x\in X) |f(x)-g(x)|\leq \frac{1}{n}\}$. By $\mathbf{CMC}$, there exists a family $\{H_n: n\in\mathbb{N}\}$ of non-empty finite sets such that, for every $n\in\mathbb{N}$, $H_n\subseteq V_n$. For every $n\in\mathbb{N}$, we can define a function $f_n\in V_n$ as follows: for every $x\in X$, $f_n(x)=\min\{h(x): h\in H_n\}$. Then, for every $n\in\mathbb{N}$, the set $U_n=\{x\in X: |f_n(x)|\leq \frac{1}{n}\}$ is clopen in $\mathbf{X}$. It is easily seen that $Z(f)=\bigcap_{n\in\mathbb{N}}U_n$. Hence (i) holds.
	
	(ii) In view of Propositions \ref{s2p2}(i) and \ref{s2p5}(iii), it remains to check that $\mathbf{CMC}$ implies that $A(\mathbf{X})=U_{\aleph_0}(\mathbf{X})$. To show that $A(\mathbf{X})\subseteq U_{\aleph_0}(\mathbf{X})$, we fix $f\in A(\mathbf{X})$ and $\varepsilon>0$. There exists a collection $\mathcal{J}=\{J_n: n\in\mathbb{N}\}$ of non-degenerate open intervals of $\mathbb{R}$ such that $\mathcal{J}$ is a cover of $\mathbb{R}$ and, for each $n\in\mathbb{N}$, the interval $J_n$ is of length $\leq \varepsilon$. Since $f\in A(\mathbf{X})$, it follows from $\mathbf{CMC}$ that there exists a family $\{\mathcal{U}_n: n\in\mathbb{N}\}$ of countable families of clopen subsets of $\mathbf{X}$ such that, for every $n\in\mathbb{N}$, $f^{-1}[J_n]=\bigcup\mathcal{U}_n$. By $\mathbf{CMC}$ and Theorem \ref{s2t9} (ii), there exists a family $\{\mathcal{V}_n: n\in\omega\}$ of countable subfamilies of $\mathcal{CO}(\mathbf{X})$ such that the family $\mathcal{V}=\bigcup\limits_{n\in\mathbb{N}}\mathcal{V}_n$ is countable and, for every $n\in\mathbb{N}$, $f^{-1}[J_n]=\bigcup\mathcal{V}_n$.  Clearly, $\mathcal{V}$ is a clopen cover of $\mathbf{X}$. Let $\mathcal{V}=\{V_n: n\in\mathbb{N}\}$. We put $U_1=V_1$ and, for every $n\in\mathbb{N}$ with $n>1$, we put $U_n=V_n\setminus\bigcup\limits_{i=1}^{n+1}V_i$.  Then $\{U_n: n\in\mathbb{N}\}$ is a pairwise disjoint clopen cover of $\mathbf{X}$ such that, for every $n\in\mathbb{N}$, the set $M(n)=\{m\in\mathbb{N}: f(U_n)\subseteq J_m\}$ is non-empty. For $n\in\mathbb{N}$, let $m(n)=\min M(n)$ and $r(n)=\frac{\sup J_{m(n)}-\inf J_{m(n)}}{2}$. We define a function $h: X\to\mathbb{R}$ as follows. For every $n\in\mathbb{N}$ and $x\in U_n$, $h(x)=r(n)$. Then $h\in C(\mathbf{X}, \mathbb{R}_{disc})$ and, for every $x\in X$, $|h(x)-f(x)|\leq\varepsilon$. This implies that $f\in \cl_{C_u(\mathbf{X})}(C(\mathbf{X},\mathbb{R}_{disc}))$. It follows from the equality $U_{\aleph_0}(\mathbf{X})=\cl_{C_u(\mathbf{X})}(C(\mathbf{X},\mathbb{R}_{disc}))$ that $f\in U_{\aleph_0}(\mathbf{X})$. Hence $A(\mathbf{X})\subseteq U_{\aleph_0}(\mathbf{X})$.
	
	To show that $U_{\aleph_0}(\mathbf{X})\subseteq A(\mathbf{X})$, let us consider an arbitrary function $f\in U_{\aleph_0}(\mathbf{X})$ and real numbers $a,b$ such that $a<b$. Let $f_a=(f-a)\vee 0$ and $g_b=(f-b)\wedge 0$. One can easily check that $f_a,g_b\in U_{\aleph_0}(\mathbf{X})$. It follows from (i) that the zero-sets $Z(f_a)$ and $Z(g_b)$ are countable intersections of clopen subsets of $\mathbf{X}$. Since $f^{-1}[(a,b)]=(X
	\setminus Z(f_a))\cap (X\setminus Z(g_b))$, the set $f^{-1}[(a,b)]$ is a countable union of clopen subsets of $\mathbf{X}$. Let $O\in\tau_{nat}$. Then $O=\bigcup\mathcal{W}$ where $\mathcal{W}$ is a countable family of bounded open intervals. Since,  $f^{-1}(O)=\bigcup_{W\in\mathcal{W}}f^{-1}[W]$ and, for every $W\in\mathcal{W}$, the set $f^{-1}[W]$ is a countable union of clopen sets, it follows from $\mathbf{CMC}$ and Theorem \ref{s2t9}(ii) that $f^{-1}[O]$  is a countable union of clopen sets of $\mathbf{X}$. Hence $f\in A(\mathbf{X})$. 
\end{proof}

\begin{remark}
	\label{s2r11}
	Let $\mathbf{X}$ be a topological space. It is obvious that $C(\mathbf{X}, \mathbb{Q}_{disc})\subseteq U_{\aleph_0}(\mathbf{X})$. Let us assume $\mathbf{CMC}$ and show that $\cl_{C_u(\mathbf{X})}(C(\mathbf{X}, \mathbb{Q}_{disc}))= U_{\aleph_0}(\mathbf{X})$. To this aim, we fix $f\in U_{\aleph_o}(\mathbf{X})$ and $\varepsilon>0$. By Theorem \ref{s2t10}, $f\in A(\mathbf{X})$. Now, we follow the part of the proof of Theorem \ref{s2t10} saying that $A(X)\subseteq U_{\aleph_0}(\mathbf{X})$. We choose a sequence $(q_n)_{n\in\mathbb{N}}$ of rational numbers such that, for every $n\in\mathbb{N}$, $|r(n)-q_n|<\frac{\varepsilon}{4}$. We define a function $g: X\to\mathbb{Q}$ as follows. For every $n\in\mathbb{N}$ and $x\in U_n$, $g(x)=q_n$. Then, for every $x\in X$, $|g(x)-f(x)|\leq\varepsilon$. Of course, $g\in C(\mathbf{X},\mathbb{Q}_{disc})$. This shows that $f\in \cl_{C_u(\mathbf{X})}(C(\mathbf{X}, \mathbb{Q}_{disc}))$ and, in consequence, $\cl_{C_u(\mathbf{X})}(C(\mathbf{X}, \mathbb{Q}_{disc}))= U_{\aleph_0}(\mathbf{X})$. One can also note that, in the above comments, $\mathbb{Q}$ can be replaced by any separable dense subspace of $\mathbb{R}$.
\end{remark}

\section{Algebras of real functions on strongly zero-dimensional spaces}
\label{s3}

Clearly, every strongly zero-dimensional space is zero-dimensional and every zero-dimensional space is completely regular. It was shown in \cite{ter} that the statement ``For $T_{3\frac{1}{2}}$-space $\mathbf{X}$, it holds that $\mathbf{X}$ is strongly zero-dimensional if and only if every zero-set in $\mathbf{X}$ is the intersection of a countable family of clopen sets of $\mathbf{X}$'' is true in $\mathbf{ZFC}$. To start a deeper analysis of the above-mentioned statement in $\mathbf{ZF}$, let us prove the following two propositions which will be applied in Section \ref{s4}.

\begin{proposition}
	\label{s3p1}
	$[\mathbf{ZF}]$ $\mathbf{CMC}$ implies that if $\mathbf{X}$ is a strongly zero-dimensional space, then every zero-set in $\mathbf{X}$ is the intersection of a countable family of clopen sets in $\mathbf{X}$ and, moreover, $A(\mathbf{X})=U_{\aleph_0}(\mathbf{X})=C(\mathbf{X})$.
\end{proposition}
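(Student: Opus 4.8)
The plan is to reduce the whole statement to the single inclusion $C(\mathbf{X})\subseteq U_{\aleph_0}(\mathbf{X})$ and then read off everything else from Theorem~\ref{s2t10}. Note first that $U_{\aleph_0}(\mathbf{X})\subseteq C(\mathbf{X})$ holds by the very definition of $U_{\aleph_0}(\mathbf{X})$, while Theorem~\ref{s2t10}(ii) already yields, under $\mathbf{CMC}$ and for an \emph{arbitrary} space, the equality $A(\mathbf{X})=U_{\aleph_0}(\mathbf{X})$. Hence, once I have shown $C(\mathbf{X})\subseteq U_{\aleph_0}(\mathbf{X})$, the chain $A(\mathbf{X})=U_{\aleph_0}(\mathbf{X})=C(\mathbf{X})$ is immediate. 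The first assertion of the proposition then comes for free: an arbitrary zero-set of $\mathbf{X}$ is $Z(g)$ for some $g\in C(\mathbf{X})=U_{\aleph_0}(\mathbf{X})$, so Theorem~\ref{s2t10}(i) shows it is a countable intersection of clopen sets of $\mathbf{X}$.

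To prove $C(\mathbf{X})\subseteq U_{\aleph_0}(\mathbf{X})$, I would fix $f\in C(\mathbf{X})$ and $\varepsilon>0$, and for each $k\in\mathbb{Z}$ set $P_k=Z\big((f-\frac{k\varepsilon}{2})\vee 0\big)=\{x\in X: f(x)\leq\frac{k\varepsilon}{2}\}$ and $Q_k=Z\big((\frac{(k+1)\varepsilon}{2}-f)\vee 0\big)=\{x\in X: f(x)\geq\frac{(k+1)\varepsilon}{2}\}$. These are disjoint zero-sets, so strong zero-dimensionality (Definition~\ref{s1d11}(vi)) furnishes a clopen set $V$ with $P_k\subseteq V\subseteq X\setminus Q_k$. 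The point is that the inclusion $X\setminus Q_k\subseteq P_{k+1}$ forces \emph{any} such $V_k$ to satisfy $V_k\subseteq V_{k+1}$, while $\bigcup_{k}V_k=X$ and $\bigcap_k V_k=\emptyset$. Putting $A_k=V_k\setminus V_{k-1}$ then produces a countable, pairwise disjoint, clopen cover $\{A_k:k\in\mathbb{Z}\}$ of $\mathbf{X}$ on each member of which $f$ takes values in an open interval of length $\varepsilon$, so $\text{osc}_{A_k}(f)\leq\varepsilon$; this witnesses $f\in U_{\aleph_0}(\mathbf{X})$.

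The delicate step, and the only reason $\mathbf{CMC}$ is needed, is the simultaneous choice of the separating clopen sets $V_k$, which strong zero-dimensionality guarantees to exist only one index at a time. I would let $\mathcal{W}_k$ be the non-empty family of all clopen $V$ with $P_k\subseteq V\subseteq X\setminus Q_k$, apply $\mathbf{CMC}$ to the countable family $\{\mathcal{W}_k:k\in\mathbb{Z}\}$ to obtain non-empty finite $\mathcal{F}_k\subseteq\mathcal{W}_k$, and then put $V_k=\bigcup\mathcal{F}_k$. A finite union of clopen sets is clopen, and passing to the union preserves both inclusions $P_k\subseteq V_k$ and $V_k\subseteq X\setminus Q_k$, so the monotonicity and covering properties used above remain valid for this genuinely selected sequence. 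This manoeuvre of converting a multiple-choice selection into a single admissible object by a finite union is precisely what makes $\mathbf{CMC}$ (rather than full $\mathbf{CAC}$) suffice, and it is the part of the argument I expect to need the most care to state cleanly.
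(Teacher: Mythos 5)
Your proof is correct, but it decomposes the proposition differently from the paper. The paper attacks the zero-set statement first and head-on: given $f\in C(\mathbf{X})$ with $Z=Z(f)$, it applies $\mathbf{CMC}$ to the countable family of sets $\mathcal{V}_n$ of clopen separators between $Z$ and the zero-set $\{x\in X: |f(x)|\geq \frac{1}{n}\}$, collapses each finite selection $\mathcal{K}_n$ to a single admissible set via $V_n=\bigcap\mathcal{K}_n$, and gets $Z=\bigcap_{n\in\mathbb{N}}V_n$; from this it deduces $C(\mathbf{X})=A(\mathbf{X})$ and only then invokes Theorem \ref{s2t10}(ii) to bring in $U_{\aleph_0}(\mathbf{X})$. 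You instead prove the strongest containment, $C(\mathbf{X})\subseteq U_{\aleph_0}(\mathbf{X})$, directly by a staircase construction: $\mathbf{CMC}$ applied to the $\mathbb{Z}$-indexed families of clopen separators between the level sets $P_k$ and $Q_k$ (collapsing finite selections by unions rather than intersections, which works equally well since both defining inclusions are preserved), the automatic monotonicity $V_k\subseteq X\setminus Q_k\subseteq P_{k+1}\subseteq V_{k+1}$, and the disjoint clopen cover $A_k=V_k\setminus V_{k-1}$ with $\mathrm{osc}_{A_k}(f)\leq\varepsilon$; you then read off $A(\mathbf{X})=U_{\aleph_0}(\mathbf{X})$ from Theorem \ref{s2t10}(ii) and the zero-set claim from Theorem \ref{s2t10}(i). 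Both routes use the same essential trick (finite-union or finite-intersection collapse of a multiple choice into a single choice, which is exactly why $\mathbf{CMC}$ suffices), but the trade-off is this: the paper's zero-set argument is elementary and self-contained, while its step ``$C(\mathbf{X})=A(\mathbf{X})$ is clear'' silently hides a further $\mathbf{CMC}$-dependent gluing (expressing $g^{-1}[O]$ for an arbitrary open $O$ as a countable union of clopen sets needs Theorem \ref{s2t9}(ii)); your route avoids that gloss entirely by landing in $U_{\aleph_0}(\mathbf{X})$ in one explicit construction, at the price of making the (elementary-looking) zero-set statement depend on the approximation machinery behind Theorem \ref{s2t10}(i).
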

\begin{proof}
	We assume $\mathbf{CMC}$ and suppose that $\mathbf{X}$ is a strongly zero-dimensional space. Let $f\in C(\mathbf{X})$ and $Z=Z(f)$. For every $n\in\mathbb{N}$, let 
	$$\mathcal{V}_n=\{ V: V\text{ is clopen in }\mathbf{X}\text{ and } Z\subseteq V\subseteq f^{-1}[(-\frac{1}{n}, \frac{1}{n})]\}.$$
	It follows from $\mathbf{CMC}$ that there exists a collection $\{\mathcal{K}_n: n\in\mathbb{N}\}$ such that, for every $n\in\mathbb{N}$, $\mathcal{K}_n$ is a non-empty finite subset of $\mathcal{V}_n$. For every $n\in\mathbb{N}$, let $V_n=\bigcap\mathcal{K}_n$. Then, for every $n\in\mathbb{N}$, $V_n\in\mathcal{V}_n$. Furthermore, $Z=\bigcap\limits_{n\in\mathbb{N}}V_n$. Now, it is clear that if $g\in C(\mathbf{X})$, then $g\in A(\mathbf{X})$, so $C(\mathbf{X})=A(\mathbf{X})$. Hence, it follows from Theorem  \ref{s2t10}(ii) that $A(\mathbf{X})=U_{\aleph_0}(\mathbf{X})=C(\mathbf{X})$. 
\end{proof}

\begin{proposition}
	\label{s3p2}
	$[\mathbf{ZF}]$ 
	Let $\mathbf{X}=\langle X, \tau\rangle$ be a completely regular space.
	\begin{enumerate}
		\item[(i)] If $A(\mathbf{X})=C(\mathbf{X})$,  then $\mathbf{X}$ is strongly zero-dimensional. 
		\item[(ii)] $\mathbf{CMC}$ implies that the space $\mathbf{X}$ is strongly zero-dimensional if and only if $A(\mathbf{X})=C(\mathbf{X})$.
		\item[(iii)] $\mathbf{CMC}$ implies that the space $\mathbf{X}$ is strongly zero-dimensional if and only if $U_{\aleph_0}(\mathbf{X})=C(\mathbf{X})$.
	\end{enumerate}
\end{proposition}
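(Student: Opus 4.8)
The plan is to prove (i) by a direct construction and then obtain (ii) and (iii) as short consequences of (i) together with Proposition \ref{s3p1} and Theorem \ref{s2t10}(ii); essentially all the work sits in (i).

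For (i), I start from two disjoint zero-sets $Z_1=Z(f_1)$ and $Z_2=Z(f_2)$ with $f_1,f_2\in C(\mathbf{X})$. Since $Z_1\cap Z_2=\emptyset$, the denominator in $f=|f_1|/(|f_1|+|f_2|)$ is everywhere positive, so $f$ is a well-defined continuous function with $0\leq f\leq 1$, $f=0$ exactly on $Z_1$ and $f=1$ exactly on $Z_2$; this is a plain $\mathbf{ZF}$ construction. By hypothesis $A(\mathbf{X})=C(\mathbf{X})$, hence $f\in A(\mathbf{X})$. Applying the defining property of $A(\mathbf{X})$ to the two open sets $(-\infty,2/3)$ and $(1/3,+\infty)$, I fix sequences of clopen sets $(C_n)_{n\in\omega}$ and $(D_n)_{n\in\omega}$, which I may take increasing (replace each term by the union of the preceding ones), with $f^{-1}[(-\infty,2/3)]=\bigcup_n C_n$ and $f^{-1}[(1/3,+\infty)]=\bigcup_n D_n$. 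Because $(-\infty,2/3)\cup(1/3,+\infty)=\mathbb{R}$, the family $\{C_n\cup D_n:n\in\omega\}$ is an increasing clopen cover of $X$; moreover $Z_1\subseteq\bigcup_n C_n$ and $Z_1$ is disjoint from $\bigcup_n D_n$ (on $Z_1$ one has $f=0$), while $Z_2\subseteq\bigcup_n D_n$ and $Z_2$ is disjoint from $\bigcup_n C_n$ (on $Z_2$ one has $f=1$).

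The key step is to interpolate a single clopen set between these two countable unions. For $x\in X$ let $n(x)=\min\{n:x\in C_n\cup D_n\}$ and put $U=\{x\in X: x\in C_{n(x)}\}$. Setting $P_n=(C_n\cup D_n)\setminus(C_{n-1}\cup D_{n-1})$ (with $C_{-1}=D_{-1}=\emptyset$), the sets $P_n$ form a clopen partition of $X$, and $U=\bigcup_n(C_n\cap P_n)$ while $X\setminus U=\bigcup_n(P_n\setminus C_n)$; both are unions of clopen sets, so $U$ is clopen. Finally $Z_1\subseteq U$, since a point of $Z_1$ lies in no $D_n$, whence $n(x)$ is attained through membership in $C_{n(x)}$; and $U\cap Z_2=\emptyset$, since a point of $Z_2$ lies in no $C_n$. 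Thus $Z_1\subseteq U\subseteq X\setminus Z_2$, giving strong zero-dimensionality. I expect the clopenness of $U$ to be the one delicate point, because a countable union of clopen sets is in general only open; the first-index device $n(x)$ is exactly what upgrades it. I also note that (i) invokes no choice: only two existential witnesses (the sequences $(C_n)$ and $(D_n)$) are instantiated, once each.

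For (ii), the direction ``$A(\mathbf{X})=C(\mathbf{X})\Rightarrow\mathbf{X}$ strongly zero-dimensional'' is precisely (i) and needs no choice, while the converse is Proposition \ref{s3p1}, which under $\mathbf{CMC}$ yields $A(\mathbf{X})=C(\mathbf{X})$ for every strongly zero-dimensional $\mathbf{X}$. For (iii), Theorem \ref{s2t10}(ii) gives $A(\mathbf{X})=U_{\aleph_0}(\mathbf{X})$ under $\mathbf{CMC}$; hence $U_{\aleph_0}(\mathbf{X})=C(\mathbf{X})$ is equivalent to $A(\mathbf{X})=C(\mathbf{X})$, and (iii) follows at once from (ii).
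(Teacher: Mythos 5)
Your proof is correct and follows essentially the same route as the paper: in (i) both arguments take a continuous $f$ separating $Z_1$ from $Z_2$, use $A(\mathbf{X})=C(\mathbf{X})$ to write two overlapping open preimages covering $X$ as countable unions of clopen sets, disjointify the resulting countable clopen cover (a choice-free step), and group the pieces into a clopen set between $Z_1$ and $X\setminus Z_2$ --- your first-index device $n(x)$ is just a repackaging of the paper's parity split of the disjointified sequence $V_n$. Parts (ii) and (iii) are then deduced exactly as in the paper, from (i), Proposition \ref{s3p1} and Theorem \ref{s2t10}.
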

\begin{proof}
	(i) Suppose that $A(\mathbf{X})=C(\mathbf{X})$. Let $Z_1, Z_2$ be a pair of disjoint zero-sets in $\mathbf{X}$. Take a function $f\in C(\mathbf{X})$ such that $Z_1\subseteq f^{-1}[\{0\}]$ and $Z_2\subseteq f^{-1}[\{1\}]$. Put $U_1=f^{-1}[(-\infty, 1)]$ and $U_2=f^{-1}[(0, +\infty)]$. Since $A(\mathbf{X})=C(\mathbf{X})$, there exists a collection $\{W_n: n\in\mathbb{N}\}$ of clopen sets of $\mathbf{X}$ such that $U_1=\bigcup_{n\in\mathbb{N}}W_{2n}$ and $U_2=\bigcup_{n\in\mathbb{N}}W_{2n-1}$. Let $V_1=W_1$ and, for every $n\in\mathbb{N}$ with $n>1$, let $V_n=W_n\setminus\bigcup_{i=1}^{n-1}W_i$. The sets $V_n$ are all clopen in $\mathbf{X}$ and pairwise disjoint. Let $G=\bigcup_{n\in\mathbb{N}}V_{2n}$ and $H=\bigcup_{n\in\mathbb{N}}V_{2n-1}$. Then $G\cup H=X$ and $G\cap H=\emptyset$, so the sets $G,H$ are both clopen in $\mathbf{X}$. Moreover, $G\subseteq U_1$ and $H\subseteq U_2$. Hence $Z_1\subseteq G\subseteq X\setminus Z_2$. This proves that $\mathbf{X}$ is strongly zero-dimensional.
	
	That (ii) holds,  follows from (i) and Proposition \ref{s3p1}. That (iii) holds, follows (ii) and Theorem \ref{s2t10}(i).
\end{proof}

It is worth to notice that the following proposition holds:
\begin{proposition}
	\label{s3p3}
	$[\mathbf{ZF}]$ Let $\mathbf{X}$ be a zero-dimensional space such that $U_{\aleph_0}(\mathbf{X})$ is separable. Then $\mathbf{X}$ is second-countable.
\end{proposition}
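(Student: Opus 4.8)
The plan is to exploit the hypothesis that $\mathbf{X}$ is zero-dimensional, which means $\mathcal{CO}(\mathbf{X})$ is a base of $\mathbf{X}$; hence it suffices to prove that $\mathcal{CO}(\mathbf{X})$ is countable, for then $\mathbf{X}$ has a countable base and is second-countable. To connect $\mathcal{CO}(\mathbf{X})$ with the separable space $U_{\aleph_0}(\mathbf{X})$, I would attach to every $U\in\mathcal{CO}(\mathbf{X})$ its characteristic function $\chi_U$. First I would check that $\chi_U\in U_{\aleph_0}(\mathbf{X})$: the function $\chi_U$ is continuous because $U$ is clopen, and for every $\varepsilon>0$ the finite clopen cover $\{U, X\setminus U\}$ witnesses the defining condition, since $\text{osc}_U(\chi_U)=\text{osc}_{X\setminus U}(\chi_U)=0$.

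The crucial observation is that $U\mapsto\chi_U$ sends $\mathcal{CO}(\mathbf{X})$ to a uniformly discrete subset of $C_u(\mathbf{X})$. Indeed, if $U,V\in\mathcal{CO}(\mathbf{X})$ and $U\neq V$, then $U\triangle V\neq\emptyset$, so there is a point $x$ lying in exactly one of $U,V$, whence $|\chi_U(x)-\chi_V(x)|=1$ and therefore $d_u(\chi_U,\chi_V)=1$. Thus any two distinct members of $\{\chi_U: U\in\mathcal{CO}(\mathbf{X})\}$ are at distance exactly $1$ in $C_u(\mathbf{X})$.

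Next I would convert separability of $U_{\aleph_0}(\mathbf{X})$ into countability of $\mathcal{CO}(\mathbf{X})$. Fix a countable dense subset $D$ of $U_{\aleph_0}(\mathbf{X})$ together with an injection $\iota\colon D\to\omega$ (which exists because $D$ is countable). For each $U\in\mathcal{CO}(\mathbf{X})$ the set $D_U=\{d\in D: d_u(\chi_U,d)<\tfrac{1}{3}\}$ is non-empty by density, so I can define $\Phi(U)$ to be the element $d\in D_U$ with least value $\iota(d)$. If $\Phi(U)=\Phi(V)$, then $d_u(\chi_U,\chi_V)<\tfrac{2}{3}<1$, which by the previous paragraph forces $U=V$; hence $\Phi\colon\mathcal{CO}(\mathbf{X})\to D$ is injective and $\mathcal{CO}(\mathbf{X})$ is countable. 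As $\mathcal{CO}(\mathbf{X})$ is a countable base of the zero-dimensional space $\mathbf{X}$, the space $\mathbf{X}$ is second-countable.

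The only delicate point is the final selection step, where I must avoid any appeal to the Axiom of Choice. Choosing the $\iota$-least element of the non-empty set $D_U\subseteq D$ is a canonical, choice-free selection, using only the fixed well-ordering of $\omega$ transported through $\iota$; so building $\Phi$ and concluding that $\mathcal{CO}(\mathbf{X})$ embeds into the countable set $D$ requires nothing beyond $\mathbf{ZF}$. I expect this to be the main thing to get right, since the rest of the argument is a routine separation estimate, and the whole proof accordingly goes through in $\mathbf{ZF}$.
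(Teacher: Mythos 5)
Your proof is correct in $\mathbf{ZF}$, and it reaches the conclusion by a route that is organized differently from the paper's, even though both hinge on the same mechanism (characteristic functions of clopen sets lie in $U_{\aleph_0}(\mathbf{X})$ and can be approximated within $\frac{1}{3}$ by members of the countable dense set). The paper never counts $\mathcal{CO}(\mathbf{X})$: it takes the dense set $\{g_n: n\in\omega\}$ and directly declares the sublevel sets $W_n=\{x\in X: |g_n(x)|<\frac{2}{3}\}$ to be the countable base, verifying the base property by approximating $\chi_{X\setminus W}$ by some $g_{n_f}$; the resulting base consists of open, not necessarily clopen, sets, and the enumeration of the dense set itself indexes the base, so no injection $\iota$ or coding map is needed. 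You instead prove that $\mathcal{CO}(\mathbf{X})$ itself is countable, by observing that $\{\chi_U: U\in\mathcal{CO}(\mathbf{X})\}$ is $1$-separated in $C_u(\mathbf{X})$ and coding each $U$ canonically by the $\iota$-least element of $D$ within $d_u$-distance $\frac{1}{3}$ of $\chi_U$; your handling of the choice issue is exactly right, since this selection is canonical and needs nothing beyond $\mathbf{ZF}$. What your route buys is a strictly stronger conclusion: countability of $\mathcal{CO}(\mathbf{X})$ is more than second-countability (for instance, $\mathbb{Q}$ is second-countable and zero-dimensional yet has uncountably many clopen sets), so your argument shows that $\mathbf{X}$ has a countable base consisting of clopen sets and hence, by Proposition \ref{s3p5}, that $\mathbf{X}$ is in fact strongly zero-dimensional --- a bonus the paper's proof does not record. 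What the paper's route buys is economy: if second-countability is all that is wanted, the sublevel-set construction gets there with no coding step at all.
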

\begin{proof}
	Let $\{g_n: n\in\omega\}$ be a dense subset of $U_{\aleph_0}(\mathbf{X})$. For each $n\in\omega$, let $W_n=\{x\in X: |g_n(x)|<\frac{2}{3}\}$. The family $\mathcal{B}=\{W_n: n\in\omega\}$ consists of open sets of $\mathbf{X}$. To check that $\mathcal{B}$ is a base of $\mathbf{X}$, we consider an arbitrary non-empty clopen subset $W$ of $\mathbf{X}$ and a point $x\in W$. For the characteristic function  $f=\chi_{X\setminus W}$ of $X\setminus W$, we have $f\in U_{\aleph_0}(\mathbf{X})$, so there exists $n_f\in\omega$ such that, for every $t\in X$, $|f(t)-g_{n_f}(t)|<\frac{1}{3}$. Since $f(x)=0$, we have $x\in W_{n_f}$. If $t\in W_{n_f}$, then $|f(t)|\leq\frac{1}{3}+|g_{n_f}(t)|<1$. This implies that $W_{n_f}\subseteq W$. Hence $\mathcal{B}$ is a countable base of $\mathbf{X}$.
\end{proof}

\begin{remark}
	\label{s3r4}
	Let $\mathbf{X}=\langle X,\tau\rangle$ be a second-countable space. Then one can show in $\mathbf{ZF}$ that every disjoint family of open sets of $\mathbf{X}$ is countable. Indeed, let $\mathcal{V}$ be a disjoint subfamily of $\tau$ and let $\mathcal{B}=\{U_n: n\in\omega\}$ be a base of $\mathbf{X}$ such that, for every $n\in\omega$, $U_n\neq\emptyset$.  For every $V\in\mathcal{V}$, we can define $n(V)=\min\{n\in\omega: U_n\subseteq V\}$.  Then the function $\mathcal{V}\ni V\to n(v)\in\omega$ is an injection, so $\mathcal{V}$ is countable.
\end{remark}

The following proposition can be proved by using similar arguments to \cite[proof of Theorem 6.2.7]{en}:

\begin{proposition}
\label{s3p5}
$[\mathbf{ZF}]$ If a topological space $\mathbf{X}$ has a countable base consisting of clopen sets, then $\mathbf{X}$ is strongly zero-dimensional. Every zero-dimensional Lindel\"of space is strongly zero-dimensional.
\end{proposition}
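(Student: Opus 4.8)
The plan is to prove the two assertions separately, following the classical $\mathbf{ZFC}$ argument from Engelking's Theorem 6.2.7 and verifying that no choice is secretly used. For the first assertion, suppose $\mathbf{X}$ has a countable base $\mathcal{B}=\{B_n : n\in\omega\}$ consisting of clopen sets. Complete regularity is immediate: for every clopen $B$ and $x\in B$, the characteristic function $\chi_{X\setminus B}$ is continuous, witnessing that $B\in\mathcal{Z}^c(\mathbf{X})$, so $\mathcal{CO}(\mathbf{X})\subseteq\mathcal{Z}^c(\mathbf{X})$ is a base and $\mathbf{X}$ is completely regular (in fact zero-dimensional). It remains to separate disjoint zero-sets $Z_1, Z_2$ by a clopen set. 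First I would show that $\mathbf{X}$ is Lindel\"of: any open cover refines to a subcover indexed by the \emph{canonically chosen} subset $\{n\in\omega : B_n \text{ is contained in some member of the cover}\}$, which is countable and requires no choice since $\omega$ is well-ordered. Then I would reduce the separation problem to the Lindel\"of case treated in the second assertion.

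For the second assertion, let $\mathbf{X}$ be a zero-dimensional Lindel\"of space and let $Z_1, Z_2$ be disjoint zero-sets, say $Z_i = Z(f_i)$ with $f_i\in C(\mathbf{X})$. Using zero-dimensionality, the family $\mathcal{CO}(\mathbf{X})$ is a base, so for each point $x\in Z_1$ I can find a clopen neighborhood $U_x$ of $x$ disjoint from $Z_2$ (possible since $Z_2$ is closed and $x\notin Z_2$), and symmetrically clopen neighborhoods for points of $Z_2$. The key step is to extract a countable clopen cover without choice: the collection $\{U : U\in\mathcal{CO}(\mathbf{X}),\ U\cap Z_2=\emptyset\}\cup\{U : U\in\mathcal{CO}(\mathbf{X}),\ U\cap Z_1=\emptyset\}$ is an open cover of $X$ (every point misses at least one of the disjoint closed sets $Z_1, Z_2$), so by the Lindel\"of property it has a countable subcover $\{W_n : n\in\omega\}$, each $W_n$ clopen and disjoint from $Z_1$ or from $Z_2$. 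I would then disjointify exactly as in the proof of Proposition \ref{s3p2}(i): set $V_1 = W_1$ and $V_n = W_n\setminus\bigcup_{i=1}^{n-1}W_i$, which are pairwise disjoint clopen sets, and let $G$ be the union of those $V_n$ whose corresponding $W_n$ is disjoint from $Z_2$. This $G$ is clopen (as a union of pairwise disjoint clopen sets whose complement is also such a union), satisfies $Z_1\subseteq G\subseteq X\setminus Z_2$, and completes the proof that $\mathbf{X}$ is strongly zero-dimensional.

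The main obstacle, and the reason the result is worth stating separately in $\mathbf{ZF}$, is ensuring that every invocation of the Lindel\"of property and every selection of clopen sets is choice-free. The Lindel\"of reduction itself is safe because subcovers can be indexed through the well-ordered base or through $\omega$, but the labeling of each $W_n$ as ``disjoint from $Z_1$'' versus ``disjoint from $Z_2$'' must be made canonical: I would define the partition of indices by the rule that $n$ belongs to the first class if $W_n\cap Z_2=\emptyset$ and to the second class otherwise, a decidable condition that needs no choice. Consequently the disjointification and the definition of $G$ are fully explicit, and the argument goes through verbatim in $\mathbf{ZF}$, exactly mirroring the classical proof.
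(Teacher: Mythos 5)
Your proof of the second assertion (zero-dimensional Lindel\"of $\Rightarrow$ strongly zero-dimensional) is correct and genuinely choice-free: the cover by clopen sets missing $Z_1$ or missing $Z_2$, the countable subcover handed to you by the Lindel\"of property, the canonical disjointification $V_n=W_n\setminus\bigcup_{i=1}^{n-1}W_i$, and the canonical labeling of indices all work in $\mathbf{ZF}$. The gap is in the first assertion, at the step ``First I would show that $\mathbf{X}$ is Lindel\"of.'' What the countable clopen base gives you canonically is a countable \emph{refinement} of a given open cover, namely $\{B_n: B_n\text{ lies in some member of the cover}\}$; to turn this into a \emph{subcover} you must pick, for each such $n$, a member of the cover containing $B_n$, and that is an instance of countable choice for arbitrary families. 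Indeed, ``every second-countable space is Lindel\"of'' is not a theorem of $\mathbf{ZF}$: $\mathbb{R}$ is second-countable, and the paper itself (proof of Theorem \ref{s4t15}(v), citing Herrlich) invokes $\mathbf{CAC}(\mathbb{R})$ precisely to get $\mathbb{R}$ Lindel\"of. Even more pointedly, Theorem \ref{s3t5} shows that ``every zero-dimensional subspace of $\mathbb{R}$ is strongly zero-dimensional'' is \emph{equivalent} to $\mathbf{CAC}(\mathbb{R})$; since every subspace of $\mathbb{R}$ is second-countable via the canonical base of rational intervals, your Lindel\"of claim combined with your (correct) second assertion would make that statement a $\mathbf{ZF}$ theorem, contradicting its independence. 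So the reduction of the first assertion to the second cannot be repaired as stated.

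The fix is short and is exactly the ``similar arguments to Engelking's Theorem 6.2.7'' that the paper alludes to (the paper gives no proof beyond this citation): bypass Lindel\"ofness entirely. Given disjoint zero-sets $Z_1, Z_2$ and the countable clopen base $\{B_n: n\in\omega\}$, put $N=\{n\in\omega: B_n\cap Z_1=\emptyset\ \vee\ B_n\cap Z_2=\emptyset\}$. Since $Z_1$ and $Z_2$ are disjoint closed sets and the $B_n$ form a base, $\{B_n: n\in N\}$ is already a countable clopen cover of $X$, each member of which misses $Z_1$ or misses $Z_2$, and it is canonically indexed by a subset of $\omega$ with no selection made anywhere. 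Now run your own disjointification and labeling argument on this cover, which is fully explicit; together with your (correct) observation that clopen sets are co-zero sets, this proves the first assertion in $\mathbf{ZF}$.
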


Theorem \ref{s3t5} below shows that the statement ``Every second-countable zero-dimensional space is strongly zero-dimensional'' is independent of $\mathbf{ZF}$. It also shows that the statement ``Every second-countable zero-dimensional $T_1$-space has a countable base consisting of clopen sets'' is unprovable in $\mathbf{ZF}$.

\begin{theorem}
	\label{s3t5}
	$[\mathbf{ZF}]$ The following are equivalent:
	\begin{enumerate}
		\item[(i)] $\mathbf{CAC}(\mathbb{R})$;
		\item [(ii)] every zero-dimensional subspace $\mathbf{X}$ of $\mathbb{R}$ has a countable base consisting of clopen sets of $\mathbf{X}$;
		\item[(iii)] for every $Y\subseteq \mathbb{R}$ such that the subspace $\mathbb{R}\setminus Y$ of $\mathbb{R}$ is zero-dimensional, it holds that the subspace $\mathbf{Y}$ of $\mathbb{R}$ is separable;
		\item[(iv)] every zero-dimesional subspace of $\mathbb{R}$ is strongly zero-dimensional.
	\end{enumerate}
\end{theorem}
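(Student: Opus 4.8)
The plan is to prove the cycle $(i)\Rightarrow(ii)\Rightarrow(iv)\Rightarrow(i)$ together with the equivalence $(i)\Leftrightarrow(iii)$. First I would record two $\mathbf{ZF}$-reformulations. For $X\subseteq\mathbb{R}$ the family $\{(a,b)\cap X:a,b\in\mathbb{Q}\}$ is canonically a countable base, so every subspace of $\mathbb{R}$ is second-countable; moreover $X$ is zero-dimensional if and only if $\inter_{\mathbb{R}}(X)=\emptyset$, i.e. $\mathbb{R}\setminus X$ is dense (if $\mathbb{R}\setminus X$ is dense, then for $x\in X$ and $\varepsilon>0$ one finds $a\in(x-\varepsilon,x)\setminus X$ and $b\in(x,x+\varepsilon)\setminus X$, whence $(a,b)\cap X$ is a clopen neighborhood of $x$; conversely a nonempty interior contains a nondegenerate interval, which is connected). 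In particular, in $(iii)$ the hypothesis ``$\mathbb{R}\setminus Y$ is zero-dimensional'' is equivalent to ``$Y$ is dense in $\mathbb{R}$'', so $(iii)$ reads: \emph{every dense subspace of $\mathbb{R}$ is separable}.

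For $(i)\Rightarrow(ii)$, let $\mathbf{X}$ be a zero-dimensional subspace of $\mathbb{R}$ with canonical base $\{W_n:n\in\omega\}$. The key observation is that clopen sets can be coded by reals: each $C\in\mathcal{CO}(\mathbf{X})$ is open, hence determined by $\iota(C)=\{(a,b)\in\mathbb{Q}^2:a<b,\ (a,b)\cap X\subseteq C\}$, giving a canonical injection $\iota$ of $\mathcal{CO}(\mathbf{X})$ into $\mathcal{P}(\mathbb{Q}^2)\approx\mathbb{R}$. For each pair $(m,n)$ with $\mathcal{D}_{m,n}=\{C\in\mathcal{CO}(\mathbf{X}):W_m\subseteq C\subseteq W_n\}\neq\emptyset$, the set $\iota[\mathcal{D}_{m,n}]$ is a nonempty subset of $\mathbb{R}$; $\mathbf{CAC}(\mathbb{R})$ selects $C_{m,n}\in\mathcal{D}_{m,n}$. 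Zero-dimensionality makes $\{C_{m,n}\}$ a base: given $x\in U\in\tau$, pick $W_n$ with $x\in W_n\subseteq U$, then clopen $C$ with $x\in C\subseteq W_n$, then $W_m$ with $x\in W_m\subseteq C$, so $\mathcal{D}_{m,n}\neq\emptyset$ and $x\in W_m\subseteq C_{m,n}\subseteq W_n\subseteq U$. Then $(ii)\Rightarrow(iv)$ is immediate from Proposition \ref{s3p5}, and $(i)\Rightarrow(iii)$ is routine: under $\mathbf{CAC}(\mathbb{R})$ a choice from each nonempty $(a,b)\cap Y$ ($a,b\in\mathbb{Q}$) is a countable dense subset of $Y$, so every subspace of $\mathbb{R}$ is separable.

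For $(iii)\Rightarrow(i)$ I would argue contrapositively. If $\mathbf{CAC}(\mathbb{R})$ fails, fix nonempty $A_n\subseteq\mathbb{R}$ with no choice function. First I spread them densely: replacing $A_n$ by $A_n+\mathbb{Q}$ keeps choicelessness, since from $x\in A_n+\mathbb{Q}$ one recovers a point of $A_n$ canonically as the least-indexed element (in a fixed enumeration of $x+\mathbb{Q}$) of the nonempty countable set $A_n\cap(x+\mathbb{Q})$. Transporting each $A_n+\mathbb{Q}$ into $(n,n+1)$ by a fixed homeomorphism yields $B_n$ dense in $(n,n+1)$ with $\{B_n\}$ still having no choice function. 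Put $Y=\bigcup_n B_n\cup(\mathbb{Q}\cap(-\infty,0))$. Then $Y$ is dense in $\mathbb{R}$, while $Y\cap(n,n+1)=B_n$ is relatively open and nonempty, so any countable dense $D\subseteq Y$ meets every $B_n$ and, via least index, produces a choice function for $\{B_n\}$ — a contradiction. Hence $Y$ is dense and non-separable, so $(iii)$ fails.

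The hard part will be $(iv)\Rightarrow(i)$, and this is the main obstacle. I would again argue contrapositively, constructing from a choiceless family a zero-dimensional subspace of $\mathbb{R}$ that is not strongly zero-dimensional. To make zero-dimensionality automatic, I would pass to a Cantor normal form: from $\neg\mathbf{CAC}(\mathbb{R})$ obtain nonempty $A_n$ lying inside pairwise disjoint Cantor sets $K_n\subseteq(n,n+1)$ with no choice function, so that any subspace built from the $K_n$ has empty interior. I would then adjoin canonical anchor points producing two explicit disjoint closed (hence zero-) sets $F_0,F_1$, and exploit the fact that any clopen $C$ of such a subspace $\mathbf{X}$ satisfies $\partial_{\mathbb{R}}C\subseteq\mathbb{R}\setminus X$ (a boundary point in $X$ would have a relative neighborhood inside $C$ or inside $X\setminus C$). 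The aim is that a clopen separation of $F_0$ from $F_1$ forces, in each $K_n$, a cut whose location is defined by a choice-free $\sup/\inf$ and selects an element of $A_n$, yielding a forbidden choice function. The delicate step — where the construction must be engineered with care — is arranging the anchors so that this cut location provably lands in $A_n$ itself and not merely in its closure, thereby making clopen separation genuinely equivalent to choosing from the $A_n$; once that is secured, $\mathbf{X}$ is a zero-dimensional but not strongly zero-dimensional subspace of $\mathbb{R}$, completing the cycle.
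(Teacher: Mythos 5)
Four of your five arrows are correct, and one of them is genuinely nicer than what the paper does: your proof of (i)$\Rightarrow$(ii) by coding $\mathcal{CO}(\mathbf{X})$ into $\mathcal{P}(\mathbb{Q}^2)\approx\mathbb{R}$ via $C\mapsto\{(a,b)\in\mathbb{Q}^2: (a,b)\cap X\subseteq C\}$ and then applying $\mathbf{CAC}(\mathbb{R})$ to the families $\mathcal{D}_{m,n}$ is valid and different from the paper, which instead invokes the equivalence of $\mathbf{CAC}(\mathbb{R})$ with ``every subspace of $\mathbb{R}$ is separable,'' takes a countable dense set $D\subseteq\mathbb{R}\setminus X$, and uses $\{(a,b)\cap X: a,b\in D,\ a<b\}$ as the clopen base. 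Your reformulation of (iii) as ``every dense subspace of $\mathbb{R}$ is separable,'' the $A_n+\mathbb{Q}$ spreading trick, and (ii)$\Rightarrow$(iv) via Proposition \ref{s3p5} are all sound. However, the proposal does not prove the theorem: in your scheme the only arrow returning from $\{$(ii),(iv)$\}$ to (i) is (iv)$\Rightarrow$(i), and you do not prove it --- you describe a strategy and explicitly leave its ``delicate step'' unresolved. What you actually establish is (i)$\Leftrightarrow$(iii) and (i)$\Rightarrow$(ii)$\Rightarrow$(iv), which is not the equivalence. (The paper avoids this dependence by proving (ii)$\Rightarrow$(iii) directly: with a countable clopen base $\{U_n\}$ of $\mathbf{X}=\mathbb{R}\setminus Y$, each $u_n=\inf U_n$ lies in $Y$, and $(\mathbb{Q}\cap Y)\cup\{u_n:n\in\omega\}$ is dense in $Y$.)

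The plan you sketch for (iv)$\Rightarrow$(i) also works against itself, which is exactly why you could not close it. If $X$ is assembled inside $\bigcup_n K_n$ plus anchor points, with $K_n$ Cantor sets, then $\mathbb{R}\setminus X$ contains the complement of $\bigcup_n K_n$ and is enormous; your (correct) observation that a clopen $C$ of $\mathbf{X}$ has $\partial_{\mathbb{R}}C\subseteq\mathbb{R}\setminus X$ then imposes no constraint at all, since the cut produced by a clopen separation may fall at any point of that huge complement, with no reason to land in $A_n$ or even near it. The paper's construction is the inversion of yours: arrange canonically (your $A_n+\mathbb{Q}$ trick composed with a definable injection into the irrationals handles this) that each $A_n$ is dense in $(n,n+1)$ with $\inter_{\mathbb{R}}(A_n)=\emptyset$, and put $X=\mathbb{R}\setminus\bigcup_{n}A_n$, so that $(\mathbb{R}\setminus X)\cap(n,n+1)$ is \emph{exactly} $A_n$ and every cut occurring in $(n,n+1)$ lands in $A_n$ automatically --- the issue you call delicate evaporates by design. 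Concretely, $C=\bigcup_n\left([n-\tfrac18,n+\tfrac18]\cap X\right)$ and $D=\bigcup_n\left([n+\tfrac14,n+\tfrac34]\cap X\right)$ are disjoint zero-sets of the metric space $\mathbf{X}$, so (iv) gives a clopen $H$ with $C\subseteq H\subseteq X\setminus D$, and $h(n)=\sup\left([n,n+\tfrac14]\cap H\right)$ must lie in $A_n$: it cannot lie in $X$, because $H$ and $X\setminus H$ are both open in $\mathbf{X}$ and $X\cap(n,n+1)$ is dense in $(n,n+1)$ (this is where $\inter_{\mathbb{R}}(A_n)=\emptyset$ is used), so either membership of $h(n)$ in $H$ or in $X\setminus H$ contradicts the definition of the supremum; and $n<h(n)\leq n+\tfrac14$. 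Thus $h$ is a choice function for $\{A_n\}$, which is the contradiction you were aiming for. If you want to keep your architecture, the repair is not cleverer anchors but this inversion: the sets you must choose from have to constitute the complement of your space on each interval.
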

\begin{proof}
	(i)$\rightarrow$(ii) Let $X\subseteq\mathbb{R}$ be such that the subspace $\mathbf{X}$ of $\mathbb{R}$ is zero-dimensional. It is known from, e.g., \cite[Theorem 4.54]{her} that $\mathbf{CAC}(\mathbb{R})$ is equivalent to the statement: Every subspace of $\mathbb{R}$ is separable. Assuming $\mathbf{CAC}(\mathbb{R})$, we can fix a countable dense subset $D$ of the subspace $\mathbb{R}\setminus X$ of $\mathbb{R}$. Then the family $\{ (a, b)\cap X: a,b\in D\wedge a<b\}$ is a countable base of $\mathbf{X}$ consisting of clopen sets of $\mathbf{X}$.
	
	(ii)$\rightarrow$(iii) Let $Y\subseteq \mathbb{R}$ and $X=\mathbb{R}\setminus Y$. Suppose that the subspace $\mathbf{X}$ of $\mathbb{R}$ has a base $\mathcal{B}=\{U_n: n\in\omega\}$ such that $\mathcal{B}\subseteq\mathcal{CO}(\mathbf{X})$. We may assume that, for every $n\in\omega$, the set $U_n$ is a non-empty bounded subset of $\mathbb{R}$.  For every $n\in\omega$, let $u_n=\inf(U_n)$. Since the sets $U_n$ are clopen in $\mathbf{X}$, we have that, for every $n\in\omega$,  $u_n\in Y$. Then the set $(\mathbb{Q}\cap Y)\cup\{u_n: n\in\omega\}$ is countable and dense in the subspace $\mathbf{Y}$ of $\mathbb{R}$. Hence (ii) implies (iii).\medskip
	
	To prove that each of (iii) and (iv) implies (i), let us fix a family $\mathcal{A}=\{A_n: n\in\mathbb{Z}\}$ of non-empty subsets of $\mathbb{R}$ where $\mathbb{Z}$ is the set of all integers.  Assuming either (iii) or (iv), we show that $\mathcal{A}$ has a choice function. Without loss of generality, we may assume that, for every $n\in\mathbb{Z}$, $A_n$ is a dense subset of the interval $(n, n+1)$, and $\inter_{\mathbb{R}}(A_n)=\emptyset$. In what follows, let $Y=\bigcup\limits_{n\in\mathbb{Z}} A_n$ and $X=\mathbb{R}\setminus Y$. Then the subspace $\mathbf{X}$ of $\mathbb{R}$ is zero-dimensional. 
	
	(iii)$\rightarrow$(i) By (iii), the subspace $\mathbf{Y}$ of $\mathbb{R}$ is separable, and the separability of $\mathbf{Y}$ implies immediately that $\mathcal{A}$ has a choice function. 
	
	(iv)$\rightarrow$(i) Let us assume (iv). For every $n\in\mathbb{Z}$, let $C_n=[n-\frac{1}{8}, n+\frac{1}{8}]\cap X$, $D_n=[n+\frac{1}{4}, n+\frac{3}{4}]\cap X$, $C=\bigcup\limits_{n\in\mathbb{Z}}C_n$ and $D=\bigcup\limits_{n\in\mathbb{Z}}D_n$. Of course, $C,D\in\mathcal{Z}(\mathbf{X})$ and $C\cap D=\emptyset$.  By (iv), the space $\mathbf{X}$ is strongly zero-dimensional, so there exists a set $H\in\mathcal{CO}(\mathbf{X})$ such that $C\subseteq H\subseteq X\setminus D$. We define a function $h\in\prod\limits_{n\in\mathbb{Z}}A_n$ as follows. For every $n\in\mathbb{Z}$, let $h(n)=\sup([ n, n+\frac{1}{4}]\cap H)$. Since $H$ is clopen in $\mathbf{X}$, we infer that, for every $n\in\mathbb{Z}$, $h(n)\in A_n$. Hence $h$ is a choice function of $\mathcal{A}$.
	
	(i)$\rightarrow$(iv) If (i) holds, then (iv) also holds by (ii) and Proposition \ref{s3p5}.
	
\end{proof} 

\section{Basic facts about $P$-spaces in $\mathbf{ZF}$}
\label{s4}

Let us begin our investigation of $P$-spaces in the absence of $\mathbf{AC}$ with the following proposition in which we summarize obvious facts observed in \cite{gh} in $\mathbf{ZFC}$ to emphasize that they hold in $\mathbf{ZF}$. Several new results are also included in the proposition below.

\begin{proposition}
	\label{s4p1}
	$[\mathbf{ZF}]$
	\begin{enumerate}
		\item (Cf. \cite[Theorem 5.3(2)]{gh}.)  A topological space $\mathbf{X}$ is a $P$-space if and only if every $G_{\delta}$-set of $\mathbf{X}$ is open in $\mathbf{X}$. In particular, if $\mathbf{X}$ is a $P$-space, then every zero-set of $\mathbf{X}$ (so also every $c_{\delta}$-set of $\mathbf{X}$) is open in $\mathbf{X}$. 
		\item (Cf. \cite[Corollary 5.6]{gh}.) Every subspace of a $P$-space is a $P$-space. 
		\item (Cf. \cite[Corollary 5.7]{gh}.) Every direct sum of $P$-spaces is a $P$-space. 
		\item Every completely regular $P$-space is strongly zero-dimensional.
		\item If $\mathbf{X}$ is a $P$-space, then $C(\mathbf{X}, \mathbb{R}_{disc})=C(\mathbf{X})$.
		\item A $P$-space is Baire if and only if it is strongly Baire.
		\item If a $T_1$-space $\mathbf{X}$ contains a cuf set $Y$ which has an accumulation point in $\mathbf{X}$, then $\mathbf{X}$ is not a $P$-space.
		\item If a $T_1$-space $\mathbf{X}$ is a $P$-space, then every cuf subspace of $\mathbf{X}$ is closed and discrete.
		\item Every topological space whose underlying set is weakly Dedekind-finite is a $P$-space.
		\item If $X$ is an infinite quasi Dedekind-finite set then, for any element $\infty\notin X$, the one-point compactification $\mathbf{X}(\infty)$ of $X_{disc}$ is a Tychonoff $P$-space.
		\item If a first-countable $T_1$-space is a $P$-space, it is discrete. In particular, every metrizable $P$-space is discrete.
		\item A topological space $\mathbf{X}=\langle X, \tau\rangle$ is a $P$-space if and only if the base $\mathcal{B}=\tau\setminus\{\emptyset\}$ is almost stable under countable intersections.
		\item If a $T_1$-space $\mathbf{X}$ is a $P$-space, then every countable subspace of $\mathbf{X}$ is discrete and closed in $\mathbf{X}$. In particular, if a separable $T_1$-space is a $P$-space, it is discrete and countable.
	\end{enumerate}
\end{proposition}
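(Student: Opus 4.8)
The plan is to prove statement (1) first and then derive nearly everything else from it. For (1), the reformulation that $\mathbf{X}$ is a $P$-space if and only if every $G_{\delta}$-set of $\mathbf{X}$ is open is immediate from the definition of a $P$-point: if each point lies in the interior of every $G_{\delta}$-set containing it, then any $G_{\delta}$-set $U$ satisfies $U\subseteq\inter_{\mathbf{X}}(U)$ and is open; conversely, if every $G_{\delta}$-set is open, then trivially each point lies in the interior of any $G_{\delta}$-set containing it. The ``in particular'' clause then follows from the inclusions $\mathcal{CO}(\mathbf{X})\subseteq\mathcal{CO}_{\delta}(\mathbf{X})\subseteq\mathcal{Z}(\mathbf{X})\subseteq\mathcal{G}_{\delta}(\mathbf{X})$ recorded in Remark \ref{s1r9}, since every zero-set and every $c_{\delta}$-set is then a $G_{\delta}$-set.

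Several items are short deductions from (1). For (2), given a $G_{\delta}$-set $U=\bigcap_n V_n$ in a subspace $\mathbf{Y}$, I would avoid choosing witnesses in $\mathbf{X}$ by taking for each $n$ the canonical open set $W_n=\bigcup\{G\in\tau: G\cap Y\subseteq V_n\}$, which satisfies $W_n\cap Y=V_n$; then $\bigcap_n W_n$ is a $G_{\delta}$-set in $\mathbf{X}$, open by (1), with trace $U$ on $Y$. Statement (3) is analogous, using that each summand is open and that the trace of a $G_{\delta}$-set on a summand is a $G_{\delta}$-set there. For (4), in a $P$-space every zero-set is open by (1) and closed as always, hence clopen, so a completely regular $P$-space separates a disjoint pair $Z_1,Z_2$ of zero-sets by the clopen set $Z_1$ itself. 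For (5), the inclusion $C(\mathbf{X},\mathbb{R}_{disc})\subseteq C(\mathbf{X})$ is clear, and conversely each fibre $f^{-1}[\{r\}]=\bigcap_n f^{-1}[(r-\tfrac1n,r+\tfrac1n)]$ is a $G_{\delta}$-set, hence open, so $f$ is continuous into $\mathbb{R}_{disc}$. Statement (6) holds because in a $P$-space every countable intersection of open sets is open, so a dense such intersection is automatically open and dense. For (11), in a first-countable $T_1$-space the intersection of a countable open neighbourhood base at $x$ equals $\{x\}$ (every other point is excluded by some base member), and this $G_{\delta}$-set is open; metric spaces are first-countable $T_1$. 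For (12), since $\mathcal{B}\cup\{\emptyset\}=\tau$, ``$\mathcal{B}$ almost stable under countable intersections'' is literally the statement that $\tau$ is stable under countable intersections, i.e.\ (1).

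The technical core is (7), and I expect its choice-free $G_{\delta}$ construction to be the \emph{main obstacle}. Fixing a decomposition $Y=\bigcup_n F_n$ into finite sets and an accumulation point $p$, I would set $U_n=X\setminus(F_n\setminus\{p\})$, which is open because $\mathbf{X}$ is $T_1$ and $F_n\setminus\{p\}$ is finite; hence $U=\bigcap_n U_n=X\setminus(Y\setminus\{p\})$ is a $G_{\delta}$-set containing $p$, and since $p$ is an accumulation point of $Y$, no open neighbourhood of $p$ lies in $U$, so $p\notin\inter_{\mathbf{X}}(U)$ and $\mathbf{X}$ is not a $P$-space. Statement (8) is the contrapositive: a cuf subspace of a $T_1$ $P$-space has no accumulation point in $\mathbf{X}$, hence is closed (a point of its closure outside it would be an accumulation point) and discrete (a non-isolated point would be one). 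Statement (13) follows because every countable set is cuf, together with the density argument that a closed discrete dense set must be the whole space.

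Finally, (9) and (10) are the genuinely Dedekind-theoretic steps. For (9), if $X$ is weakly Dedekind-finite then $\mathcal{P}(X)$ is Dedekind-finite, so any sequence $(V_n)$ of open sets has finite range; hence a $G_{\delta}$-set $\bigcap_n V_n$ is really a finite intersection and is open, giving a $P$-space by (1). For (10), the only non-isolated point of $\mathbf{X}(\infty)$ is $\infty$, and a $G_{\delta}$-set $U\ni\infty$ has the form $X(\infty)\setminus\bigcup_n F_n$ with each $F_n\in[X]^{<\omega}$; since $X$ is quasi Dedekind-finite, $[X]^{<\omega}$ is Dedekind-finite, so $(F_n)$ has finite range, $\bigcup_n F_n$ is finite, and $U$ is a neighbourhood of $\infty$, showing $\infty$ is a $P$-point. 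That $\mathbf{X}(\infty)$ is Tychonoff I would obtain from zero-dimensionality, as the singletons and the cofinite sets $X(\infty)\setminus F$ form a clopen base, together with the $T_1$ separation already noted.
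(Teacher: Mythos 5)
Your proof is correct and follows essentially the same approach as the paper: the $G_\delta$ reformulation (1) drives items (2)--(6), (11), (12); canonical open sets of the form $\bigcup\{G\in\tau: G\cap Y\subseteq V_n\}$ handle subspaces without any choice; Dedekind-finiteness of $\mathcal{P}(X)$, respectively of $[X]^{<\omega}$, forces sequences of open sets, respectively of finite sets, to have finite range in (9)--(10); and the accumulation-point argument settles (7), (8), (13). Your only departures are local and sound: in (7) you argue directly with the $G_\delta$-set $X\setminus(Y\setminus\{p\})$ where the paper passes to the subspace $Z=Y\cup\{x\}$ and invokes (2) (your version, by deleting $p$ from each $F_n$, also covers the case $p\in Y$, which the paper's identity $\{x\}=\bigcap_{n}(Z\setminus Y_n)$ tacitly assumes away); in (10) you prove Tychonoffness via the clopen base rather than citing the external reference; and you deduce (13) from (8) (countable sets are cuf) instead of repeating the paper's direct countable-intersection argument.
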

\begin{proof}
	It is obvious that (1), (3)--(6) hold in $\mathbf{ZF}$. Let us give a proof that (2) is true in $\mathbf{ZF}$ for our proof must differ from that in \cite{gh}. In what follows, we assume that $\mathbf{X}=\langle X, \tau\rangle$ is a topological space, and $Y\subseteq X$. \medskip
	
	(2)  Suppose that $\mathbf{X}$ is a $P$-space. Let $\{U_n: n\in\omega\}$ be a family of subsets of $Y$ such that, for every $n\in\omega$, the set $U_n$ is open in the subspace $\mathbf{Y}$ of $\mathbf{X}$. For every $n\in\omega$, let $V_n=\bigcup\{ V\in\tau: V\cap Y=U_n\}$ and let $V=\bigcap\limits_{n\in\omega}V_n$. Since $\mathbf{X}$ is a $P$-space, the set $V$ is open in $\mathbf{X}$. Since $Y\cap V=\bigcap\limits_{n\in\omega}U_n$, the set $\bigcap\limits_{n\in\omega}U_n$ is open in $\mathbf{Y}$, so $\mathbf{Y}$ is a $P$-space. \medskip
	
	(7) Now, suppose that $\mathbf{X}$ is a $T_1$-space, and $Y=\bigcup\limits_{n\in\omega}Y_n$ where, for every $n\in\omega$, the set $Y_n$ is finite. Suppose that $x$ is an accumulation point of $Y$ in $\mathbf{X}$. Let $Z=Y\cup\{x\}$. In the subspace $\mathbf{Z}$ of $\mathbf{X}$, we have $\{x\}=\bigcap\limits_{n\in\omega}(Z\setminus Y_n)$. Hence, if the space $\mathbf{X}$ were a $P$-space, then $\mathbf{Z}$ would be a $P$-space, so  the set $\{x\}$ would be open in $\mathbf{Z}$. However, since $x$ is an accumulation point of $\mathbf{Z}$, $\{x\}$ is not open in $\mathbf{Z}$. This implies that $\mathbf{X}$ is not a $P$-space. Hence (7) holds. That (8) holds is a consequence of (7).\medskip
	
	(9) Suppose that $X$ is a weakly Dedekind-finite set. Let $\{U_n: n\in\omega\}$ be a collection of subsets of $X$ such that, for every $n\in\omega$, $U_{n+1}\subseteq U_n$. It follows from the definition of a weakly Dedekind-finite set that there exists $n_0\in\omega$ such that, for every $n\in\omega$ with $n_0\in n$, $U_n= U_{n_0}$. Hence, if all the sets $U_n$ are open in $\mathbf{X}$, the set $\bigcap\limits_{n\in\omega} U_n= U_{n_0}$ is open in $\mathbf{X}$, so $\mathbf{X}$ is a $P$-space, and (9) is true.\medskip
	
	(10) Now, assume that $X$ is an infinite quasi Dedekind-finite set. That $\mathbf{X}(\infty)$ is a Tychonoff space was observed in \cite[Proposition 2.12]{kw1}. To prove that $\mathbf{X}(\infty)$ is a $P$-space, we consider any family $\{V_n: n\in\omega\}$ of open sets of $\mathbf{X}(\infty)$ and put $V=\bigcap\limits_{n\in\omega}V_n$. We may assume that, for every $n\in\omega$, $V_{n+1}\subseteq V_n$. If $V\subseteq X$, then $V$ is open in $\mathbf{X}(\infty)$. Suppose that $\infty\in V$. Then, for every $n\in\omega$, the set $K_n= X\setminus V_n$ is finite and $K_{n}\subseteq K_{n+1}$. Since $X$ is quasi Dedekind-finite, there exists $n_0\in\omega$ such that, for every $n\in\omega$ with $n_0\in n$, $K_n=K_{n_0}$. Then $V=V_{n_0}$, so $V$ is open in $\mathbf{X}(\infty)$. Thus, it follows from (1) that $\mathbf{X}(\infty)$ is a $P$-space.
	
	It follows from (1) that (11) and (12) hold.
	
	To show that (13) holds, let us assume that $D$ is a countable subset of a $T_1$-space $\mathbf{X}$ such that $\mathbf{X}$ is a $P$-space. Suppose that $x\in X\setminus D$. Then the set  $U=\bigcap\{X\setminus\{t\}: t\in D\}$ is open in $\mathbf{X}$ and $x\in U\subseteq X\setminus D$. This shows that $D$ is closed in $\mathbf{X}$. Using similar arguments, one can show that, for every $y\in D$, the set $\{y\}$ is open in the subspace $\mathbf{D}$ of $\mathbf{X}$. Hence $\mathbf{D}$ is discrete.
\end{proof}

\begin{remark}
	\label{s4r2}
	We notice that an infinite quasi Dedekind-finite set exists in every model of $\mathbf{ZF}+\neg\mathbf{NAS}$ (see Remark \ref{s1r5}(i)). For instance, infinite quasi Dedekind-finite sets exist in the model $\mathcal{M}37$ in \cite{hr} because $\mathbf{NAS}$ fails in $\mathcal{M}37$. It was shown in \cite{kw2} that $\mathbf{IQDI}$ fails in Cohen's original model $\mathcal{M}1$ of \cite{hr} although $\mathbf{IWDI}$ holds in $\mathcal{M}1$. 
\end{remark}

Corollary 5.4 of \cite{gh} asserts that, in $\mathbf{ZFC}$, every countably compact Tychonoff $P$-space is finite and every locally countably compact Tychonoff $P$-space is discrete. To show that Corollary 5.4 of \cite{gh} may fail in $\mathbf{ZF}$, we shall apply the following theorem:

\begin{theorem}
	\label{s4t3}
	$[\mathbf{ZF}]$
	\begin{enumerate}
		\item The sentence ``Every compact Tychonoff $P$-space is finite'' implies $\mathbf{IQDI}$. 
		\item The sentence ``Every locally compact Tychonoff $P$-space is discrete'' implies $\mathbf{IQDI}$.
		\item $\mathbf{IQDI}$ is equivalent to the sentence ``Every $P$-space which is a limit point compact $T_1$-space is finite''.
		\item The sentence ``Every strongly zero-dimensional $T_1$-space whose underlying set is quasi Dedekind-finite is a $P$-space'' implies $\mathbf{IDI}(\mathbb{R})$.
	\end{enumerate}
\end{theorem}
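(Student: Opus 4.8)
The plan is to handle parts (1), (2) and one direction of (3) uniformly via the one-point compactification, and to treat the key statement (4) separately by a contrapositive construction inside $\mathbb{R}$. For the three implications \emph{into} $\mathbf{IQDI}$, I would argue by contraposition: assume $\mathbf{IQDI}$ fails and fix an infinite quasi Dedekind-finite set $X$. By Proposition \ref{s4p1}(10) the one-point compactification $\mathbf{X}(\infty)$ of $X_{disc}$ is a Tychonoff (hence $T_1$) $P$-space; it is compact and infinite, and since $\infty$ is its unique accumulation point it is neither discrete nor finite. As compactness implies limit point compactness in $\mathbf{ZF}$ (if an infinite $A$ had no accumulation point, each point would have a neighbourhood meeting $A$ in at most itself, and a finite subcover would force $A$ to be finite), the single space $\mathbf{X}(\infty)$ simultaneously refutes ``every compact Tychonoff $P$-space is finite'', ``every locally compact Tychonoff $P$-space is discrete'', and ``every limit point compact $T_1$ $P$-space is finite''. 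This yields (1), (2) and the ``$\Leftarrow$'' implication of (3).

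For the forward direction of (3), I would assume $\mathbf{IQDI}$ and let $\mathbf{X}$ be a limit point compact $T_1$-space that is a $P$-space, supposing $X$ infinite. Then $X$ is quasi Dedekind-infinite, so $[X]^{<\omega}$ is Dedekind-infinite and there is a sequence $(F_n)_{n\in\omega}$ of pairwise distinct finite subsets of $X$. The set $Y=\bigcup_{n\in\omega}F_n$ is infinite (otherwise $\mathcal{P}(Y)$ would be finite, contradicting distinctness of the $F_n$) and is a cuf set by construction. By limit point compactness $Y$ has an accumulation point in $\mathbf{X}$, so Proposition \ref{s4p1}(7) shows $\mathbf{X}$ is not a $P$-space, a contradiction. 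Hence $X$ is finite.

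The heart of the theorem is (4), which I would also prove contrapositively: assume $\neg\mathbf{IDI}(\mathbb{R})$ and fix an infinite Dedekind-finite $A\subseteq\mathbb{R}$, viewed as a subspace of $\mathbb{R}$, aiming to make $A$ witness the failure of the quoted statement. First, $A$ is quasi Dedekind-finite, because if $[A]^{<\omega}$ were Dedekind-infinite then the union of a sequence of distinct finite subsets of $A$ would be an infinite cuf subset of $\mathbb{R}$, which is denumerable (enumerate the successive finite pieces in the order inherited from $\mathbb{R}$), forcing $A$ to be Dedekind-infinite. Next, $A$ is not discrete: $A$ is second countable, and a discrete second-countable space is countable since each singleton would have to belong to the countable base; thus discreteness would again make $A$ denumerable. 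As $A$ is metrizable and $T_1$, Proposition \ref{s4p1}(11) then gives that $A$ is not a $P$-space.

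It remains to verify that $A$ is strongly zero-dimensional, and I expect this to be the main obstacle, since for a \emph{general} zero-dimensional subspace of $\mathbb{R}$ strong zero-dimensionality is equivalent to $\mathbf{CAC}(\mathbb{R})$ by Theorem \ref{s3t5} and may fail in $\mathbf{ZF}$. The Dedekind-finiteness of $A$ is precisely what rescues the construction: $\mathbb{Q}\cap A$ is a Dedekind-finite subset of the denumerable set $\mathbb{Q}$, hence finite, so $\mathbb{Q}\setminus A$ is dense in $\mathbb{R}$. I would then show that $\mathcal{C}=\{(r,r')\cap A : r,r'\in\mathbb{Q}\setminus A,\ r<r'\}$ is a countable clopen base of $A$: each such set is clopen because its closure in $A$ lies in $[r,r']\cap A=(r,r')\cap A$ as $r,r'\notin A$; and given $a$ in an open $U$ of $A$, one finds a rational interval around $a$ inside a real-open set defining $U$ and shrinks its endpoints into $\mathbb{Q}\setminus A$. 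By Proposition \ref{s3p5}, a space with a countable clopen base is strongly zero-dimensional. Thus $A$ is a strongly zero-dimensional $T_1$-space with quasi Dedekind-finite underlying set that is not a $P$-space, contradicting the quoted statement and establishing $\mathbf{IDI}(\mathbb{R})$.
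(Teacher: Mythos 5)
Your proof is correct and follows essentially the same route as the paper's: parts (1), (2) and the backward implication of (3) are obtained from Proposition \ref{s4p1}(10) via the one-point compactification of an infinite quasi Dedekind-finite discrete set, the forward implication of (3) comes from producing an infinite cuf subset and applying Proposition \ref{s4p1}(7), and part (4) is proved contrapositively from an infinite Dedekind-finite $A\subseteq\mathbb{R}$, exploiting the finiteness of $\mathbb{Q}\cap A$ exactly as the paper does. The only (cosmetic) divergence is the final step of (4): you conclude strong zero-dimensionality from the countable clopen base $\{(r,r')\cap A: r,r'\in\mathbb{Q}\setminus A,\ r<r'\}$ via Proposition \ref{s3p5}, whereas the paper uses the same interval construction to show that every open set of the subspace is a countable union of clopen sets, hence $A(\mathbf{X})=C(\mathbf{X})$, and then cites Proposition \ref{s3p2}(i).
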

\begin{proof} 
	It follows from Proposition \ref{s4p1}(10) that (1), (2) and the statement ``If every limit point compact $T_1$ $P$-space is finite,  then $\mathbf{IQDI}$'' are all true in $\mathbf{ZF}$. To complete the proof of (3), we assume $\mathbf{IQDI}$ and consider any infinite limit point compact $T_1$-space $\mathbf{X}=\langle X, \tau\rangle$. It follows from $\mathbf{IQDI}$ that there exists a family $\{K_n: n\in\omega\}$ of finite subsets of $X$ such that the set $Y=\bigcup\limits_{n\in\omega}K_n$ is infinite. Since $\mathbf{X}$ is limit point compact, the set $Y$ has an accumulation point in $\mathbf{X}$. Clearly, $Y$ is a cuf set. Then,  by Proposition \ref{s4p1}(7), $\mathbf{X}$ is not a $P$-space. Hence (3) holds.
	
	To prove (4), let us suppose that $\mathbf{IDI}(\mathbb{R})$ fails. Let $X$ be an infinite Dedekind-finite subset of $\mathbb{R}$. Then the subspace $\mathbf{X}$ of $\mathbb{R}$ with the natural topology is metrizable but not discrete. In view of Proposition \ref{s4p1}(11), $\mathbf{X}$ is not a $P$-space.  Since the family $[\mathbb{R}]^{<\omega}\setminus\{\emptyset\}$ has a choice function, the set $X$ is quasi Dedekind-finite. The set $\mathbb{Q}\cap X$ is finite, hence, if $U$ is a non-empty open subset of $\mathbb{R}$, there exist sequences $(a_n)_{n\in\omega}$ and $(b_n)_{n\in\omega}$ of numbers from $\mathbb{Q}\setminus X$ such that, for every $n\in\omega$, $a_n<b_n$, and $U=\bigcup\limits_{n\in\omega}(a_n, b_n)$. Then $U\cap X=\bigcup\limits_{n\in\omega}(X\cap(a_n, b_n))$ and, for every $n\in\omega$, the set $X\cap (a_n, b_n)$ is clopen in $\mathbf{X}$. This implies that $A(\mathbf{X})=C(\mathbf{X})$, so, by Proposition \ref{s3p2}(i), the space $\mathbf{X}$ is strongly zero-dimensional. This completes the proof of (4).
\end{proof}

Since Corollary 5.4 of \cite{gh} holds in $\mathbf{ZFC}$ but, in view of Remark \ref{s4r2} and Theorem \ref{s4t3}, it fails in the model $\mathcal{M}37$ of \cite{hr}, we deduce that the following theorem holds:

\begin{theorem}
	\label{s4t4}
	Corollary 5.4 of \cite{gh} is independent of $\mathbf{ZF}$.
\end{theorem}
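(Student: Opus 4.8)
The plan is to prove independence by producing one model of $\mathbf{ZF}$ in which Corollary 5.4 of \cite{gh} holds and another in which it fails; recall that this corollary asserts that every countably compact Tychonoff $P$-space is finite and every locally countably compact Tychonoff $P$-space is discrete.

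First I would dispose of the consistency (non-refutability) half. Corollary 5.4 was established in \cite{gh} working in $\mathbf{ZFC}$, so it is true in every model of $\mathbf{ZFC}$. By G\"{o}del's relative consistency theorem, $\mathrm{Con}(\mathbf{ZF})$ implies $\mathrm{Con}(\mathbf{ZFC})$; indeed, inside any model of $\mathbf{ZF}$ the constructible universe $L$ is a model of $\mathbf{ZFC}$, hence a model of $\mathbf{ZF}$ in which Corollary 5.4 holds. Thus $\mathbf{ZF}$ cannot refute Corollary 5.4.

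Next I would treat the non-provability half inside the model $\mathcal{M}37$ of \cite{hr}. By Remark \ref{s4r2}, $\mathcal{M}37$ contains an infinite quasi Dedekind-finite set $X$ (equivalently, $\mathbf{IQDI}$ fails there, which follows from the failure of $\mathbf{NAS}$ via the chain in Remark \ref{s1r5}(i), since $\mathbf{IQDI}\rightarrow\mathbf{NAS}$). Choosing $\infty\notin X$, Proposition \ref{s4p1}(10) shows that the one-point compactification $\mathbf{X}(\infty)$ of $X_{disc}$ is a Tychonoff $P$-space. Since $X$ is infinite, $\mathbf{X}(\infty)$ is infinite and compact, hence countably compact; being compact it is also locally compact, hence locally countably compact, and it is not discrete because $\infty$ is an accumulation point. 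Therefore $\mathbf{X}(\infty)$ violates both assertions of Corollary 5.4, so Corollary 5.4 is false in $\mathcal{M}37$, and $\mathbf{ZF}$ cannot prove Corollary 5.4.

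Putting the two halves together establishes the independence. The substantive mathematics has already been carried out in Theorem \ref{s4t3} and Proposition \ref{s4p1}(10), so the remaining work is light; the only point requiring care is the metamathematical bookkeeping. In particular, I would stress that $\mathcal{M}37$ is a genuine $\mathbf{ZF}$ model and not merely a $\mathbf{ZFA}$ permutation model, so that the failure of Corollary 5.4 in $\mathcal{M}37$ directly witnesses the consistency of its negation with $\mathbf{ZF}$ without any appeal to a transfer theorem. This is the one step where I would expect a careless argument to go wrong, and it is the main (if modest) obstacle.
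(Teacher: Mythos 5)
Your proposal is correct and takes essentially the same route as the paper: the paper likewise observes that Corollary 5.4 of \cite{gh} holds in $\mathbf{ZFC}$ and fails in the model $\mathcal{M}37$ of \cite{hr}, where the failure of $\mathbf{NAS}$ gives an infinite quasi Dedekind-finite set $X$ whose one-point compactification $\mathbf{X}(\infty)$ is an infinite, (countably) compact, non-discrete Tychonoff $P$-space (Proposition \ref{s4p1}(10), packaged in the paper as Theorem \ref{s4t3}(1),(2)). Your extra metamathematical bookkeeping (G\"{o}del's $L$ for the non-refutability half, and the remark that $\mathcal{M}37$ is a genuine $\mathbf{ZF}$ model so no transfer theorem is needed) is sound and merely makes explicit what the paper leaves implicit.
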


\begin{remark}
	\label{s4r5}
	Since,  $\mathbf{IDI}(\mathbb{R})$ is false, for instance, in Cohen's original model $\mathcal{M}1$ in \cite{hr}, it follows from Theorem \ref{s4t3}(iv) that the statement  ``Every strongly zero-dimensional $T_1$-space whose underlying set is quasi Dedekind-finite is a $P$-space'' is unprovable in $\mathbf{ZF}$.
\end{remark}

\begin{definition}
	\label{s4d6}
	For a set $X$, let $\tau_{cof}=\{\emptyset\}\cup\{X\setminus F: F\in [X]^{<\omega}\}$. Then $\tau_{cof}$ is the \emph{cofinite topology} on $X$, and $\mathbf{X}_{cof}=\langle X, \tau_{cof}\rangle$.
\end{definition}

\begin{theorem}
	\label{s4t7}
	$[\mathbf{ZF}]$  
	\begin{enumerate}
		\item[(i)] For every non-empty set $X$, it holds that $\mathbf{X}_{cof}$ is a $P$-space if and only if $X$ is quasi Dedekind-finite. 
		\item[(ii)] For every infinite set $X$ and every element $\infty$ with $\infty\notin X$, the space $\mathbf{X}(\infty)$ (i.e., the Alexandroff compactification of $X_{disc}$) is a $P$-space if and only if $X$ is quasi Dedekind-finite.
	\end{enumerate}
\end{theorem}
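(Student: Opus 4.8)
The plan is to prove both equivalences by the same mechanism: the implication ``$P$-space $\Rightarrow$ $X$ quasi Dedekind-finite'' is obtained in contrapositive form from Proposition~\ref{s4p1}(7), while the converse is a direct inspection of countable intersections of open sets. Everything rests on one set-theoretic observation that I would record at the outset: \emph{a set $X$ is quasi Dedekind-finite if and only if every countable family of finite subsets of $X$ has finite union} (equivalently, every $\subseteq$-increasing sequence of finite subsets of $X$ is eventually constant). The nontrivial implication is that quasi Dedekind-infiniteness produces a sequence with infinite union: from an injection $\omega\to[X]^{<\omega}$ witnessing that $[X]^{<\omega}$ is Dedekind-infinite, the values cannot all lie in a single finite subset, so their union is infinite; conversely, given a sequence $(F_n)$ with infinite union I would pass to $G_n=\bigcup_{i\le n}F_i$ and extract a strictly increasing subsequence to get an injection $\omega\to[X]^{<\omega}$. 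This is essentially the computation already used in the proof of Proposition~\ref{s4p1}(10).

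For part (i), assume first that $X$ is quasi Dedekind-finite. A $G_{\delta}$-set of $\mathbf{X}_{cof}$ is a countable intersection $\bigcap_{n}V_n$ of open sets; if some $V_n$ is empty the intersection is empty and hence open, so I may assume each $V_n=X\setminus F_n$ with $F_n\in[X]^{<\omega}$, whence $\bigcap_n V_n=X\setminus\bigcup_n F_n$. By the characterization $\bigcup_n F_n$ is finite, so the intersection is cofinite, thus open, and $\mathbf{X}_{cof}$ is a $P$-space by Proposition~\ref{s4p1}(1). For the reverse implication I argue contrapositively: if $X$ is quasi Dedekind-infinite, then $X$ is infinite, so $\mathbf{X}_{cof}$ is a $T_1$-space, and the characterization supplies an infinite cuf set $Y\subseteq X$. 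In the cofinite topology every point of $X$ is an accumulation point of each infinite subset, so $Y$ has an accumulation point; hence $\mathbf{X}_{cof}$ is not a $P$-space by Proposition~\ref{s4p1}(7).

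For part (ii), the implication ``$X$ quasi Dedekind-finite $\Rightarrow$ $\mathbf{X}(\infty)$ is a $P$-space'' is precisely Proposition~\ref{s4p1}(10), so nothing new is needed there. For the converse I again argue contrapositively and reuse the infinite cuf set $Y\subseteq X$ produced above. The space $\mathbf{X}(\infty)$ is Hausdorff, hence $T_1$, and every neighborhood of $\infty$ is cofinite, so it meets the infinite set $Y$ in an infinite set; therefore $\infty$ is an accumulation point of $Y$ with $\infty\notin Y$, and Proposition~\ref{s4p1}(7) shows that $\mathbf{X}(\infty)$ is not a $P$-space.

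I expect the only delicate point to be the clean application of Proposition~\ref{s4p1}(7). Its proof genuinely uses that the accumulation point lies outside the cuf set $Y$; in part (ii) this is automatic because $\infty\notin Y$, but in part (i) the accumulation point may happen to lie in $Y$. This is harmless: if $x\in Y$ is an accumulation point, then in the $T_1$-space $\mathbf{X}_{cof}$ the same $x$ is still an accumulation point of the still-infinite cuf set $Y\setminus\{x\}$, which no longer contains $x$, so Proposition~\ref{s4p1}(7) applies as stated. Beyond this, the argument is routine.
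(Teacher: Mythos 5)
Your proof is correct and takes essentially the same route as the paper's: the paper likewise derives the ``$P$-space $\Rightarrow$ quasi Dedekind-finite'' direction from Proposition~\ref{s4p1}(7) applied to an infinite cuf set (it invokes limit point compactness via the proof of Theorem~\ref{s4t3}(3), where you exhibit the accumulation points directly), and it proves the positive direction by the same stabilization of increasing finite sets for $\mathbf{X}_{cof}$ and by citing Proposition~\ref{s4p1}(10) for $\mathbf{X}(\infty)$, exactly as you do. Your closing patch---replacing $Y$ by $Y\setminus\{x\}$ when the accumulation point $x$ lies in $Y$---is a worthwhile extra precaution, since the paper's own proof of Proposition~\ref{s4p1}(7) tacitly assumes the accumulation point is outside the cuf set.
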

\begin{proof}
	If $X$ is a finite set,  then $\mathbf{X}_{cof}$ is a $P$-space and $X$ is quasi Dedekind-finite. In what follows, we assume  that $X$ is an infinite set and $\infty$ is an element such that $\infty\notin X$. The spaces $\mathbf{X}_{cof}$ and $\mathbf{X}(\infty)$ are both limit point compact $T_1$-spaces. Therefore, the proof of Theorem \ref{s4t3}(3) shows that if $X$ is quasi Dedekind-infinite, then the spaces $\mathbf{X}_{cof}$ and $\mathbf{X}(\infty)$ are not $P$-spaces. 
	
	Now, let us assume that $X$ is an infinite quasi Dedekind-finite set. It follows from Proposition \ref{s4p1}(10) that $\mathbf{X}(\infty)$ is a $P$-space. To complete the proof, let us show that $\mathbf{X}_{cof}$ is a $P$-space. To this end, we fix a point $x_0\in X$ and a family $\{K_n: n\in\omega\}$  of finite subsets of $X$ such that, for every $n\in\omega$, $K_n\subseteq K_{n+1}$ and  $x_0\in\bigcap\limits_{n\in\omega}(X\setminus K_n)$. Since $X$ is quasi Dedekind-finite, there exists $n_0\in\omega$ such that, for every $n\in\omega$ with $n_0\in n$, $K_n=K_{n_0}$. Then $x_0\in\bigcap\limits_{n\in\omega}(X\setminus K_n)=X\setminus K_{n_0}$, so $x_0$ is a $P$-point of $\mathbf{X}_{cof}$. 
\end{proof}

\begin{corollary}
	\label{s4c8}
	$[\mathbf{ZF}]$ The following are equivalent:
	\begin{enumerate}
		\item[(i)] $\mathbf{IQDI}$;
		\item[(ii)] for every infinite set $X$, the space $\mathbf{X}_{cof}$ is not a $P$-space;
		\item[(iii)] for every infinite set $X$ and any element $\infty\notin X$, the space $\mathbf{X}(\infty)$ is not a $P$-space.
	\end{enumerate}
\end{corollary}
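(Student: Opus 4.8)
The plan is to derive this corollary directly from Theorem \ref{s4t7}, which already does all the real work; what remains is purely a matter of negating both sides of the biconditionals in that theorem and recalling that $\mathbf{IQDI}$ is exactly the assertion that every infinite set is quasi Dedekind-infinite (equivalently, fails to be quasi Dedekind-finite). The only facts I would keep in mind are that a finite set is trivially quasi Dedekind-finite and that $\mathbf{X}_{cof}$ is discrete (hence a $P$-space) when $X$ is finite, so that the quantifier ``for every infinite set $X$'' in (ii) and (iii) matches the scope of $\mathbf{IQDI}$.

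For the equivalence (i)$\leftrightarrow$(ii), I would argue as follows. By Theorem \ref{s4t7}(i), for every non-empty set $X$ the space $\mathbf{X}_{cof}$ is a $P$-space if and only if $X$ is quasi Dedekind-finite; taking contrapositives, $\mathbf{X}_{cof}$ is \emph{not} a $P$-space if and only if $X$ is quasi Dedekind-infinite. Hence the statement ``for every infinite set $X$, the space $\mathbf{X}_{cof}$ is not a $P$-space'' is equivalent to ``every infinite set $X$ is quasi Dedekind-infinite'', which is precisely $\mathbf{IQDI}$.

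The equivalence (i)$\leftrightarrow$(iii) is handled in the same way using Theorem \ref{s4t7}(ii): for an infinite set $X$ and $\infty\notin X$, the space $\mathbf{X}(\infty)$ is a $P$-space if and only if $X$ is quasi Dedekind-finite, so $\mathbf{X}(\infty)$ fails to be a $P$-space if and only if $X$ is quasi Dedekind-infinite. For (i)$\rightarrow$(iii) this applies verbatim to an arbitrary $\infty\notin X$; for (iii)$\rightarrow$(i), given an infinite $X$ I would simply fix some element $\infty\notin X$ (such an element always exists in $\mathbf{ZF}$) and read off that $X$ is quasi Dedekind-infinite. There is no genuine obstacle here: the substantive content---especially the forward direction of each biconditional in Theorem \ref{s4t7}, where quasi Dedekind-finiteness of $X$ is used to collapse a decreasing $\omega$-indexed sequence of cofinite complements into an eventually constant one---has already been established, and this corollary is merely its reformulation in terms of $\mathbf{IQDI}$.
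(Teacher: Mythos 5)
Your proposal is correct and matches the paper's intent exactly: the paper states Corollary \ref{s4c8} without a separate proof precisely because it is the contrapositive reformulation of the two biconditionals in Theorem \ref{s4t7}, which is what you carry out. Your added care about the quantifier scope (finite sets being trivially quasi Dedekind-finite, and the existence in $\mathbf{ZF}$ of some $\infty\notin X$ for the (iii)$\rightarrow$(i) direction) is exactly the right bookkeeping and introduces no gap.
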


\begin{theorem}
	\label{s4t9}
	$[\mathbf{ZF}]$ $\mathbf{CMC}$ implies the following:
	\begin{enumerate}
		\item[(i)] for every topological space $\mathbf{X}$, the families $\mathcal{CO}_{\delta}(\mathbf{X})$, $\mathcal{Z}(\mathbf{X})$ and $\mathcal{G}_{\delta}(\mathbf{X})$ are stable under countable intersections;
		\item[(ii)] for every topological space $\mathbf{X}$, the space $(\mathbf{X})_{\delta}$ is a $P$-space.
	\end{enumerate}
	Furthermore, (i) implies (ii).
\end{theorem}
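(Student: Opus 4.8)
The plan is to establish the three stability assertions in (i) by the same ``$\mathbf{CMC}$-collapse'' device already used in Theorems \ref{s2t9}(ii) and \ref{s2t10}(i) and in Proposition \ref{s3p1} — namely, using $\mathbf{CMC}$ to replace, for each index $n$, a \emph{finite} set of witnesses by a \emph{single} canonical one — and then to deduce (ii) from (i) by producing, for each point, one basic $G_{\delta}$-neighbourhood sitting inside a prescribed countable intersection of open sets. Throughout, $\tau$ is the topology of $\mathbf{X}$ and I write $\sigma=\tau(\mathcal{G}_{\delta}(\mathbf{X}))$ for the topology of $(\mathbf{X})_{\delta}$, recalling that $\mathcal{G}_{\delta}(\mathbf{X})$ is a base of $(\mathbf{X})_{\delta}$.

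For $\mathcal{G}_{\delta}(\mathbf{X})$, fix a countable family $\{G_n:n\in\omega\}\subseteq\mathcal{G}_{\delta}(\mathbf{X})$. For each $n$ the set $\mathcal{D}_n$ of all sequences $(U_m)_{m\in\omega}\in\tau^{\omega}$ with $\bigcap_{m}U_m=G_n$ is non-empty, so $\mathbf{CMC}$ yields a non-empty finite $\mathcal{E}_n\subseteq\mathcal{D}_n$; putting $U_{n,m}=\bigcap\{s(m):s\in\mathcal{E}_n\}$ gives a finite intersection of open sets, hence an open set, and $\bigcap_{m}U_{n,m}=\bigcap_{s\in\mathcal{E}_n}\bigcap_{m}s(m)=G_n$ for every $n$. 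Therefore $\bigcap_{n}G_n=\bigcap_{n,m}U_{n,m}\in\mathcal{G}_{\delta}(\mathbf{X})$. Replacing $\tau$ by $\mathcal{CO}(\mathbf{X})$ verbatim, and using that a finite intersection of clopen sets is clopen, gives the assertion for $\mathcal{CO}_{\delta}(\mathbf{X})$. For $\mathcal{Z}(\mathbf{X})$ the collapse is algebraic: given zero-sets $\{Z_n:n\in\omega\}$, the set of $f\in C(\mathbf{X})$ with $Z(f)=Z_n$ is non-empty, so $\mathbf{CMC}$ produces finite $\mathcal{E}_n$ of such functions; then $f_n=\sum_{f\in\mathcal{E}_n}|f|\in C(\mathbf{X})$ satisfies $Z(f_n)=Z_n$, and $g=\sum_{n}2^{-n}\min\{|f_n|,1\}$ is continuous in $\mathbf{ZF}$ (the series converges uniformly and a uniform limit of continuous functions is continuous) with $Z(g)=\bigcap_{n}Z(f_n)=\bigcap_{n}Z_n$, whence $\bigcap_{n}Z_n\in\mathcal{Z}(\mathbf{X})$.

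For the implication (i)$\rightarrow$(ii), by Proposition \ref{s4p1}(1) it suffices to prove that every $G_{\delta}$-set of $(\mathbf{X})_{\delta}$ is $\sigma$-open. Write such a set as $A=\bigcap_{n}O_n$ with each $O_n\in\sigma$ and fix $x\in A$. Since $x$ lies in the $\sigma$-open set $O_n$ and $\mathcal{G}_{\delta}(\mathbf{X})$ is a base of $(\mathbf{X})_{\delta}$, there is a member of $\mathcal{G}_{\delta}(\mathbf{X})$ containing $x$ and contained in $O_n$. If one can produce a \emph{single} $G_{\delta}$-set $G_n$ of $\mathbf{X}$ with $x\in G_n\subseteq O_n$ for every $n$, then by (i) the set $\bigcap_{n}G_n$ again belongs to $\mathcal{G}_{\delta}(\mathbf{X})$ and satisfies $x\in\bigcap_{n}G_n\subseteq A$, so $x\in\inter_{(\mathbf{X})_{\delta}}(A)$; as $x$ was arbitrary, $A$ is $\sigma$-open and $(\mathbf{X})_{\delta}$ is a $P$-space.

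The step I expect to be the main obstacle is exactly this selection of the $G_n$ \emph{without any form of choice}: the bare slogan ``a base stable under countable intersections forces a $P$-space'' is not provable in $\mathbf{ZF}$ (that is the content of the forms $\mathbf{ABP}$ and $\mathbf{WBP}$), so one cannot argue purely abstractly from (i). My plan is therefore to exploit the concrete shape of $\mathcal{G}_{\delta}(\mathbf{X})$ rather than abstract stability, replacing the ill-defined choice of a neighbourhood by a canonically defined set such as $N_n=\bigcup\{G\in\mathcal{G}_{\delta}(\mathbf{X}):x\in G\subseteq O_n\}$ and then using (i) to extract from $\bigcap_{n}N_n$ a genuine $G_{\delta}$-neighbourhood of $x$. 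Under the stronger hypothesis $\mathbf{CMC}$ this obstruction disappears: $\mathbf{CMC}$ supplies, for each $n$, a non-empty \emph{finite} family of $G_{\delta}$-neighbourhoods of $x$ contained in $O_n$, whose finite intersection is a single neighbourhood $G_n$, after which (i) closes the argument as above. This is exactly why $\mathbf{CMC}$ delivers (ii) directly, and it pinpoints where additional care is needed to keep the implication (i)$\rightarrow$(ii) inside $\mathbf{ZF}$.
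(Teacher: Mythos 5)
Your treatment of the two enumerated items is correct and is, in spirit, the paper's own ``$\mathbf{CMC}$-collapse'' argument, but your implementation is cleaner in two places. For $\mathcal{CO}_{\delta}(\mathbf{X})$ and $\mathcal{G}_{\delta}(\mathbf{X})$ you apply $\mathbf{CMC}$ to sets of \emph{sequences} of open (respectively clopen) sets and intersect coordinatewise; the paper instead applies $\mathbf{CMC}$ to sets of countable subfamilies of $\tau$ (respectively of $\mathcal{CO}(\mathbf{X})$) and must then invoke Theorem \ref{s2t9} (the form $\mathbf{A}(\delta\delta)$) to compensate for the missing enumerations, so your sequence trick short-circuits an entire auxiliary lemma. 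For $\mathcal{Z}(\mathbf{X})$ your normalization $g=\sum_{n}2^{-n}\min\{|f_n|,1\}$ is in fact more careful than the paper's $f=\sum_{n}2^{-n}f_n$ with $f_n=\sum_{f\in\mathcal{K}_n}(|f|\wedge 1)$, which as written need not converge if the finite sets $\mathcal{K}_n$ grow in size. Your derivation of item (ii) is also valid, but note that it differs from the paper's: you invoke $\mathbf{CMC}$ a \emph{second} time, to produce finite families of basic $\mathcal{G}_{\delta}(\mathbf{X})$-neighbourhoods of $x$ inside each $O_n$, whereas the paper deduces (ii) from (i) alone.

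That last difference is exactly where the genuine gap lies: the theorem also asserts, as a standalone $\mathbf{ZF}$ implication, ``Furthermore, (i) implies (ii)'', and your proposal does not prove it; what you prove is $(\mathrm{i})\wedge\mathbf{CMC}\rightarrow(\mathrm{ii})$. You are right that the delicate step is selecting, for every $n$, one $G_n\in\mathcal{G}_{\delta}(\mathbf{X})$ with $x\in G_n\subseteq O_n$, and right that the abstract principles $\mathbf{ABP}$/$\mathbf{WBP}$ are not theorems of $\mathbf{ZF}$, so no purely formal argument from stability of a base can succeed. But your proposed repair is circular: writing $\sigma$ for the topology of $(\mathbf{X})_{\delta}$ as you do, the set $N_n=\bigcup\{G\in\mathcal{G}_{\delta}(\mathbf{X}):x\in G\subseteq O_n\}$ is merely $\sigma$-open, not a member of $\mathcal{G}_{\delta}(\mathbf{X})$, so $\bigcap_{n}N_n$ is again a countable intersection of $\sigma$-open sets --- precisely the kind of set over which (i) gives no control, and precisely what (ii) is about. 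Hence the ``furthermore'' clause is left unestablished in your proposal. For comparison, the paper disposes of this clause in a single sentence (``It is obvious that \dots''), whose implicit argument is exactly the neighbourhood selection you distrust; so you have identified a step for which the paper arguably owes its readers more justification, but a complete proof of the stated theorem must either carry out that implication without any choice or explain why it holds, and your write-up does neither.
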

\begin{proof}
	Let $\mathbf{X}=\langle X, \tau\rangle$ be a topological space. It is obvious that if $\mathcal{G}_{\delta}(\mathbf{X})$ is stable under countable intersections, then $(\mathbf{X})_{\delta}$ is a $P$-space. Hence (i) implies (ii).
	
	Now, assuming $\mathbf{CMC}$, we prove that (i) holds. First, let us show that $\mathcal{Z}(\mathbf{X})$ is stable under countable intersections. To this aim, we consider any family $\{Z_n: n\in\omega\}$ of members of $\mathcal{Z}(\mathbf{X})$. For every $n\in\omega$, let $\mathcal{F}_n=\{f\in C(\mathbf{X}): Z_n=Z(f)\}$. By $\mathbf{CMC}$, there exists a family $\{\mathcal{K}_n: n\in\omega\}$ such that, for every $n\in\omega$, $\mathcal{K}_n$ is a non-empty finite subset of $\mathcal{F}_n$. For every $n\in\omega$, let $f_n=\sum\limits_{f\in\mathcal{K}_n}(|f|\wedge 1)$. Let $f=\sum\limits_{n\in\omega}\frac{f_n}{2^n}$. Then $f\in C(\mathbf{X})$ and $Z(f)=\bigcap\limits_{n\in\omega}Z_n$. Hence $\bigcap\limits_{n\in\omega}Z_n\in\mathcal{Z}(\mathbf{X})$.
	
	Assuming $\mathbf{CMC}$, let us show that $\mathcal{G}_{\delta}(\mathbf{X})$ and $\mathcal{CO}_{\delta}(\mathbf{X})$ are both stable under countable intersections. To this aim, we consider any family $\{G_n: n\in\omega\}$ of members of $\mathcal{G}_{\delta}(\mathbf{X})$ (respectively, of $\mathcal{CO}_{\delta}(\mathbf{X})$). For every $n\in\omega$, let $\mathcal{G}_n=\{\mathcal{D}\in[\tau]^{\leq\omega}\setminus\{\emptyset\}: G_n=\bigcap\mathcal{D}\}$ (respectively, $\mathcal{G}_n=\{\mathcal{D}\in [\mathcal{CO}]^{\leq\omega}\setminus\{\emptyset\}: G_n=\bigcap\mathcal{D}\}$). By $\mathbf{CMC}$, there exists a family $\{\mathcal{E}_n: n\in\omega\}$ such that, for every $n\in\omega$, $\mathcal{E}_n$ is a non-empty finite subset of $\mathcal{G}_n$. Let us fix $n\in\omega$. Suppose that $k(n)\in\mathbb{N}$ is such that $\mathcal{E}_n=\{\mathcal{D}_i: i\in k(n)\}$. Then we define $\mathcal{U}_n=\{\bigcap D_i: (\forall i\in k(n)) D_i\in\mathcal{D}_i\}$. Clearly, $\mathcal{U}_n$ is countable and $\mathcal{U}_n\subseteq\tau$ (respectively, $\mathcal{U}_n\subseteq\mathcal{CO}(\mathbf{X})$); furthermore, $G_n=\bigcap\mathcal{U}_n$, and $\mathcal{U}_n$ does not depend on our choice of the enumeration of $\mathcal{E}_n$.  In this way, we have defined a family $\{\mathcal{U}_n: n\in\omega\}$ of non-empty countable families of $\tau$ (respectively, of $\mathcal{CO}(\mathbf{X})$). It follows from Theorem \ref{s2t9} that there exists a family $\{V_{n,m}: n,m\in\omega\}$ of open (respectively, clopen) sets of $\mathbf{X}$ such that $\bigcap\limits_{n\in\omega}G_n=\bigcap\limits_{n\in\omega}\bigcap\limits_{m\in\omega}V_{n,m}$. This completes the proof of (i).
\end{proof}

In $\mathbf{ZFC}$, for every topological space $\mathbf{X}$, the space $(\mathbf{X})_{\delta}$ is a $P$-space. That the situation is different in $\mathbf{ZF}$ is shown by the following theorem:

\begin{theorem}
	\label{s4t10}
	$[\mathbf{ZF}]$ Each of the following sentences (i)--(v) implies $\mathbf{CMC}_{\omega}$:
	\begin{enumerate}
		\item[(i)] for every $T_{3\frac{1}{2}}$-space $\mathbf{X}$, at least one of the  families $\mathcal{G}_{\delta}(\mathbf{X})$ and $\mathcal{Z}(\mathbf{X})$ is stable under countable intersections;
		\item[(ii)] for every strongly zero-dimensional $T_1$-space $\mathbf{X}$, the family $\mathcal{CO}_{\delta}(\mathbf{X})$ is stable under countable intersections;
		\item[(iii)] for every $T_{3\frac{1}{2}}$-space $\mathbf{X}$, the space $(\mathbf{X})_{\delta}$ is a $P$-space;
		\item[(iv)] every countable product of countable, compact metrizable spaces is metrizable;
		\item[(v)] every countable product of countable, compact, first-countable $T_{3\frac{1}{2}}$-spaces is first-countable. 
	\end{enumerate}
\end{theorem}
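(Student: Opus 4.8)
The plan is to argue by contraposition, deriving a multiple choice function from each of (i)--(v) once $\mathbf{CMC}_{\omega}$ is assumed to fail. So suppose $\mathbf{CMC}_{\omega}$ is false and fix a family $\{A_n : n\in\omega\}$ of denumerable sets admitting no multiple choice function; replacing $A_n$ by $A_n\times\{n\}$, I may assume the $A_n$ are pairwise disjoint (a multiple choice function for the disjointified family projects to one for the original). For each $n$ pick an added point $\infty_n\notin A_n$ and form the one-point compactification $\mathbf{A}_n(\infty_n)$ of the discrete space on $A_n$ (Definition \ref{s1d17}); each such space is a countable, compact, zero-dimensional $T_{3\frac12}$-space, and since $A_n$ is denumerable it is moreover first-countable and metrizable. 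Let $\mathbf{P}=\prod_{n\in\omega}\mathbf{A}_n(\infty_n)$ and let $\infty=(\infty_n)_{n\in\omega}\in P$. The heart of the argument is the following lemma, which I would prove by a point-perturbation trick: \emph{if $\{\infty\}$ is a $G_{\delta}$-set of $\mathbf{P}$, or if $\mathbf{P}$ is first-countable at $\infty$, then $\{A_n\}$ has a multiple choice function.}

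To prove the lemma, suppose $\{\infty\}=\bigcap_{k\in\omega}O_k$ with each $O_k$ open (the first-countable case is identical, with $\{O_k\}$ a neighbourhood base at $\infty$). For $a\in A_n$ let $e_{n,a}\in P$ be the point agreeing with $\infty$ off coordinate $n$ and taking value $a$ at $n$, and set $F^k_n=\{a\in A_n : e_{n,a}\notin O_k\}$. Since $O_k$ is open and contains $\infty$, it contains a basic neighbourhood restricting only finitely many coordinates to cofinite sets; inspecting that neighbourhood shows that $F^k_n=\emptyset$ when $n$ lies outside its (finite) support and $F^k_n$ is contained in a finite set otherwise, so each $F^k_n$ is a \emph{finite} subset of $A_n$, defined without any choice from $O_k$, $n$ and the canonical point $\infty$. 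Moreover $\bigcup_k F^k_n=A_n$: for each $a\in A_n$ the set $\{x\in P : x(n)\neq a\}$ is an open neighbourhood of $\infty$, hence contains (or equals an intersection that omits) some $O_k$, forcing $a\in F^k_n$. Thus $m(n)=\min\{k : F^k_n\neq\emptyset\}$ is well defined and $G_n=F^{m(n)}_n$ is a non-empty finite subset of $A_n$ specified uniformly in $n$; so $(G_n)_{n\in\omega}$ is a multiple choice function, a contradiction.

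With the lemma in hand each of (i)--(v) is disposed of by representing $\{\infty\}$ appropriately. Since $\{\infty_n\}$ is simultaneously a zero-set and a $c_{\delta}$-set of $\mathbf{A}_n(\infty_n)$ (use a bijection of $A_n$ with $\omega$), the sets $\pi_n^{-1}[\{\infty_n\}]$ are zero-sets, $G_{\delta}$-sets and $c_{\delta}$-sets of $\mathbf{P}$, and $\{\infty\}=\bigcap_{n\in\omega}\pi_n^{-1}[\{\infty_n\}]$. Hence: if either $\mathcal{Z}(\mathbf{P})$ or $\mathcal{G}_{\delta}(\mathbf{P})$ were stable under countable intersections, as in (i), then $\{\infty\}$ would be a $G_{\delta}$-set; if $(\mathbf{P})_{\delta}$ were a $P$-space, as in (iii), then, as each $\pi_n^{-1}[\{\infty_n\}]$ is open in $(\mathbf{P})_{\delta}$, the singleton $\{\infty\}$ would be open in $(\mathbf{P})_{\delta}$, i.e. a $G_{\delta}$-set of $\mathbf{P}$; if $\mathcal{CO}_{\delta}(\mathbf{P})$ were stable, as in (ii), then $\{\infty\}$ would be a $c_{\delta}$- hence $G_{\delta}$-set; and if $\mathbf{P}$ were metrizable (iv) or first-countable (v), then $\mathbf{P}$ would be first-countable at $\infty$. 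In every case the lemma yields a multiple choice function for $\{A_n\}$, the desired contradiction. Here I use that $\mathbf{P}$ is $T_{3\frac12}$ and $T_1$ (both productive in $\mathbf{ZF}$), that for (iv) and (v) the factors are countable, compact, metrizable and first-countable, and that for (ii) the space $\mathbf{P}$ is strongly zero-dimensional.

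The main obstacle I anticipate is precisely that last point: establishing in $\mathbf{ZF}$ that $\mathbf{P}$ is a \emph{strongly} zero-dimensional $T_1$-space, as required to invoke (ii). Zero-dimensionality and $T_1$ are immediate from the clopen-rectangle base, but the usual route to strong zero-dimensionality runs through compactness of the countable product $\mathbf{P}$, which is not available in $\mathbf{ZF}$ under $\neg\mathbf{CMC}_{\omega}$, and the alternative route via $A(\mathbf{P})=C(\mathbf{P})$ (Proposition \ref{s3p2}(i)) reintroduces a choice of countably many clopen decompositions. I would therefore try to separate disjoint zero-sets of $\mathbf{P}$ by a clopen set directly, exploiting that each factor is a one-point compactification whose only non-isolated point is $\infty_n$, so that a continuous $[0,1]$-valued function is controlled near any point by finitely many coordinates; should this direct separation resist a choice-free proof, I would replace $\mathbf{P}$ by a manifestly strongly zero-dimensional space carrying the same $c_{\delta}$-instability. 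The remaining verifications (that the sets $\pi_n^{-1}[\{\infty_n\}]$ lie in the stated families, and the $\mathbf{ZF}$-productivity of $T_{3\frac12}$ and $T_1$) are routine.
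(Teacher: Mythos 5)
Your construction and your extraction lemma are essentially the paper's own proof. The paper forms the same product $\prod_{n\in\omega}\mathbf{A}_n(\infty_n)$, notes that each of (i)--(v) forces $\{\infty\}$ to be a countable intersection of open sets $V_j$, and then recovers a multiple choice function exactly as you do: its sets $A_k\setminus\pi_k[V_{j,k}]$, where $V_{j,k}=\{x\in V_j:(\forall n\in\omega\setminus\{k\})\,x(n)=\infty_n\}$, are precisely your sets $F^j_k$, and it likewise takes the least index $j$ for which this set is non-empty. Your treatment of (i), (iii), (iv) and (v) is complete and correct (your parenthetical ``contains (or equals an intersection that omits) some $O_k$'' is just the case split between the $G_\delta$ case and the neighbourhood-base case, and both readings are sound).

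The one genuine issue is exactly the one you flagged for (ii), and you should know that the paper does not resolve it either: its proof records that each \emph{factor} $\mathbf{A}_n(\infty_n)$ is strongly zero-dimensional and then applies (ii) to the \emph{product}, with no justification that the product is strongly zero-dimensional in $\mathbf{ZF}$. None of the standard routes is available there: under $\neg\mathbf{CMC}_{\omega}$ the product need not be compact, Lindel\"of or second-countable (a countable base of non-empty open sets would already yield a choice function for $\{A_n\}_{n\in\omega}$, by taking for each $n$ the least basic set contained in some slab $\pi_n^{-1}[\{a\}]$). Your fallback plan is the correct repair, and it is easy to carry out: apply (ii) instead to $\mathbf{B}(\infty)$, the one-point compactification of the discrete space on $B=\bigcup_{n\in\omega}A_n$. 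This space is a strongly zero-dimensional $T_1$-space in $\mathbf{ZF}$ for trivial reasons: of two disjoint closed sets, at least one omits $\infty$, hence is finite, hence clopen, and it separates. For each $n$, the family $\{B(\infty)\setminus\{a\}: a\in A_n\}$ is a countable family of clopen sets, so $G_n=B(\infty)\setminus A_n$ is a $c_\delta$-set, and $\bigcap_{n\in\omega}G_n=\{\infty\}$. If (ii) held, $\{\infty\}$ would be a countable intersection of clopen, hence cofinite, sets, so $B$ would be a countable union of finite sets $\bigcup_{k\in\omega}F_k$; then $n\mapsto F_{k(n)}\cap A_n$, with $k(n)$ the least $k$ such that $F_k\cap A_n\neq\emptyset$, is a multiple choice function for $\{A_n\}_{n\in\omega}$. (The same space also disposes of (i) and (iii), though for those the product works as you wrote; (iv) and (v) genuinely need the product, but they do not need strong zero-dimensionality.) With this substitution your argument is complete, and on case (ii) it is in fact more careful than the published proof.
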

\begin{proof}
	Let $\mathcal{A}=\{A_n: n\in\omega\}$ be a disjoint family of infinite countable sets.  We fix a sequence $(\infty_n)_{n\in\omega}$ of elements such that none of the elements $\infty_n$ is a member of $\bigcup\mathcal{A}$. For every $n\in\omega$, let $X_n=A_n\cup\{\infty_n\}$  and let $\mathbf{X}_n=\mathbf{A}_n(\infty_n)$. We consider the set $X=\prod\limits_{n\in\omega}X_n$ and the space $\mathbf{X}=\prod\limits_{n\in\omega}\mathbf{X}_n$.  For every $n\in\omega$, the space $\mathbf{X}_n$ is countable, compact, metrizable and strongly zero-dimensional. It is obvious that, for every $n\in\omega$, the set $\{\infty_n\}$ is a $c_{\delta}$-set in $\mathbf{X}_n$ and, therefore, $G_n=\pi_n^{-1}(\infty_n)$ is a $c_{\delta}$-set in $\mathbf{X}$. Let $\infty\in\prod\limits_{n\in\omega}X_n$ be defined as follows: for every $n\in\omega$, $\infty(n)=\infty_n$. It is easily seen that $\{\infty\}=\bigcap\limits_{n\in\omega}G_n$. Each of the statements (i)--(v) implies that there exists a family $\{V_j: j\in\omega\}$ of open sets of $\mathbf{X}$ such that  $\{\infty\}=\bigcap\limits_{j\in\omega}V_j$. Now, let us modify and clarify several arguments given in \cite[proof of Theorem 2.2]{kk}. For every pair $j,k$ of elements of $\omega$, we put $V_{j,k}=\{x\in V_j: (\forall n\in\omega\setminus\{k\}) x(n)=\infty_n\}$. Then $\{\infty\}=\bigcap\limits_{j,k\in\omega}V_{j,k}$. Clearly, if $j,k\in\omega$, then $\pi_k[V_{j,k}]$ is an open neighborhood of $\infty_k$ in $\mathbf{X}_k$. Suppose that there exists $k_0\in\omega$ such that, for every $j\in\omega$, $A_{k_0}\subseteq\pi_{k_0}[V_{j, k_0}]$. Fix an element $a\in A_{k_0}$ and define an element $b\in X$ as follows: $b(k_0)=a$ and, for every $n\in\omega\setminus\{k_0\}$, $b(n)=\infty_n$. Then $b\neq\infty$ and $b\in\bigcap\limits_{j,k\in\omega}V_{j,k}$. This is impossible because $\infty$ is the unique element of $\bigcap\limits_{j,k\in\omega}V_{j,k}$. The contradiction obtained proves that, for every $k\in\omega$, the set $C_k=\{j\in\omega: A_j\setminus\pi_k[V_{j.k}]\neq\emptyset\}$ is non-empty. For every $k\in\omega$, we can define $j_k=\min C_k$. Then, for every $k\in\omega$, $A_k\setminus\pi_{k}[V_{j_k, k}]$ is a non-empty finite subset of $A_k$. Hence $\mathcal{A}$ has a multiple choice function. 
\end{proof}

\begin{remark}
	\label{s4r11}
	It is known that $\mathbf{CMC}_{\omega}$ is false, for instance, in Pincus' Model II, labeled as model $\mathcal{M}29$ in \cite{hr}. In consequence, none of the statements (i)-(v) listed in Theorem \ref{s4t9} is provable in $\mathbf{ZF}$.
\end{remark}

\begin{theorem}
	\label{s4t12}
	$[\mathbf{ZF}]$ $\mathbf{CMC}$ implies that, for every topological space $\mathbf{X}$, the following conditions are satisfied:
	\begin{enumerate}
		\item[(i)] if $\mathbf{X}$ is completely regular and every zero-set in $\mathbf{X}$ is open, then $\mathbf{X}$ is a $P$-space;
		\item[(ii)] if $\mathbf{X}$ is a zero-dimensional space such that every member of $\mathcal{CO}_{\delta}(\mathbf{X})$ is open in $\mathbf{X}$, then $\mathbf{X}$ is a $P$-space. 
		\item[(iii)] if $\mathbf{X}$ is completely regular and $C(\mathbf{X},\mathbb{R}_{disc})=C(\mathbf{X})$, then $\mathbf{X}$ is a $P$-space; 
		\item[(iv)]  if $\mathbf{X}$ is zero-dimensional, then $\mathbf{X}$ is a $P$-space if and only if $U_{\aleph_0}(\mathbf{X})=C(\mathbf{X},\mathbb{R}_{disc})=C(\mathbf{X})$.
	\end{enumerate}
\end{theorem}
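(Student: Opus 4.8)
The plan is to prove (i) directly and then to obtain (ii)--(iv) as consequences of (i) together with results already established. The heart of the matter is the reformulation that, by Proposition \ref{s4p1}(1), to show a space is a $P$-space it suffices to prove that every $G_{\delta}$-set is open, and for this it is enough to show that each $G_{\delta}$-set is a neighbourhood of every one of its points. So in (i)--(iii) I would fix a $G_{\delta}$-set $G=\bigcap_{n\in\omega}U_n$ with each $U_n$ open, fix $x\in G$, and produce an open set between $x$ and $G$.

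For (i), assume $\mathbf{X}$ is completely regular and every zero-set in $\mathbf{X}$ is open. Complete regularity guarantees that, for each $n\in\omega$, the family $\mathcal{F}_n=\{Z\in\mathcal{Z}(\mathbf{X}): x\in Z\subseteq U_n\}$ is non-empty: given $f\in C(\mathbf{X})$ with $f(x)=0$ and $f\equiv 1$ on $X\setminus U_n$, the zero-set $Z(f)$ lies in $\mathcal{F}_n$. The delicate point is that $\mathbf{CMC}$ supplies only a multiple choice, not a genuine choice, so I cannot simply pick one $Z_n\in\mathcal{F}_n$ per $n$. Instead, I would apply $\mathbf{CMC}$ to $\{\mathcal{F}_n:n\in\omega\}$ to obtain a family $\{\mathcal{K}_n:n\in\omega\}$ with each $\mathcal{K}_n$ a non-empty finite subfamily of $\mathcal{F}_n$, and set $Z_n=\bigcap\mathcal{K}_n$; since a finite intersection of zero-sets is a zero-set, $Z_n\in\mathcal{Z}(\mathbf{X})$ and still $x\in Z_n\subseteq U_n$. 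By Theorem \ref{s4t9}(i) (which uses $\mathbf{CMC}$), $\bigcap_{n\in\omega}Z_n\in\mathcal{Z}(\mathbf{X})$, hence it is open by hypothesis; as $x\in\bigcap_{n\in\omega}Z_n\subseteq G$, the point $x$ lies in $\inter_{\mathbf{X}}(G)$. Thus $G$ is open and $\mathbf{X}$ is a $P$-space.

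For (ii) I would run the same argument with clopen sets in place of zero-sets: zero-dimensionality makes $\{C\in\mathcal{CO}(\mathbf{X}): x\in C\subseteq U_n\}$ non-empty, $\mathbf{CMC}$ together with finite intersections produces clopen $C_n$ with $x\in C_n\subseteq U_n$, and now $\bigcap_{n\in\omega}C_n$ is by definition a member of $\mathcal{CO}_{\delta}(\mathbf{X})$, so it is open by the hypothesis of (ii) (no appeal to Theorem \ref{s4t9} is needed here). Part (iii) then reduces immediately to (i): if $C(\mathbf{X},\mathbb{R}_{disc})=C(\mathbf{X})$, then every $f\in C(\mathbf{X})$ is continuous into $\mathbb{R}_{disc}$, so $Z(f)=f^{-1}[\{0\}]$ is open; hence every zero-set of the completely regular space $\mathbf{X}$ is open and (i) applies.

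Finally, for (iv) the implication ($\Leftarrow$) is contained in (iii), since a zero-dimensional space is completely regular and the equalities $U_{\aleph_0}(\mathbf{X})=C(\mathbf{X},\mathbb{R}_{disc})=C(\mathbf{X})$ give in particular $C(\mathbf{X},\mathbb{R}_{disc})=C(\mathbf{X})$. For ($\Rightarrow$), if the zero-dimensional space $\mathbf{X}$ is a $P$-space, then Proposition \ref{s4p1}(5) yields $C(\mathbf{X},\mathbb{R}_{disc})=C(\mathbf{X})$, while Proposition \ref{s4p1}(4) makes $\mathbf{X}$ strongly zero-dimensional, whence $U_{\aleph_0}(\mathbf{X})=C(\mathbf{X})$ by Proposition \ref{s3p2}(iii) under $\mathbf{CMC}$; combining the two equalities gives the required triple equality. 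The only genuine obstacle throughout is the choice-theoretic one isolated in (i): replacing a missing countable choice of neighbourhoods by $\mathbf{CMC}$ plus finite intersection, and then using the $\mathbf{CMC}$-driven stability of $\mathcal{Z}(\mathbf{X})$ under countable intersections (Theorem \ref{s4t9}) to keep $\bigcap_{n\in\omega}Z_n$ inside $\mathcal{Z}(\mathbf{X})$; everything else is bookkeeping with the earlier propositions.
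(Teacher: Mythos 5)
Your proof is correct and follows essentially the same route as the paper's: $\mathbf{CMC}$ together with stability of $\mathcal{Z}(\mathbf{X})$ (resp.\ $\mathcal{CO}(\mathbf{X})$) under finite intersections to select the sets $Z_n$ (resp.\ $C_n$), Theorem \ref{s4t9}(i) to keep $\bigcap_{n\in\omega}Z_n$ a zero-set in (i), reduction of (iii) to (i), and Propositions \ref{s4p1}(4),(5) and \ref{s3p2}(iii) for (iv). Your explicit remark that (ii) needs no appeal to Theorem \ref{s4t9}, since $\bigcap_{n\in\omega}C_n\in\mathcal{CO}_{\delta}(\mathbf{X})$ by definition, is a nice refinement of the paper's treatment, which simply omits the proof of (ii) as ``similar to (i)''.
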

\begin{proof} Let us assume $\mathbf{CMC}$.
	
	(i) Suppose that $\mathbf{X}$ is a completely regular space whose every zero-set is open in $\mathbf{X}$. Let $\{U_n: n\in\omega\}$ be a family of open sets of $\mathbf{X}$ and let $x\in U=\bigcap\limits_{n\in\omega}U_n$. Since the family $\mathcal{Z}(\mathbf{X})$ is stable under finite intersections, it follows from the complete regularity of $\mathbf{X}$ and from $\mathbf{CMC}$ that we can fix a collection $\{Z_n: n\in\omega\}$ of zero-sets of $\mathbf{X}$ such that, for every $n\in\omega$, $x\in Z_n\subseteq U_n$. Let $Z=\bigcap_{n\in\omega}Z_n$. It follows from Theorem \ref{s4t9}(i) that $Z\in\mathcal{Z}(\mathbf{X})$. By Proposition \ref{s4p1}(1), the set $Z$ is open in $\mathbf{X}$. Since $x\in Z\subseteq U$ and $x$ is an arbitrary point of $U$, we deduce that the set $U$ is open in $\mathbf{X}$. Hence $\mathbf{X}$ is a $P$-space.\medskip
	
	(ii) The proof of (ii) is similar to that of (i), so we omit it.
	
	(iii) Suppose that $\mathbf{X}$ is a topological space such that $C(\mathbf{X}, \mathbb{R}_{disc})=C(\mathbf{X})$. Then every zero-set of $\mathbf{X}$ is open in $\mathbf{X}$, so, by $\mathbf{CMC}$, (iii) follows from (i).\medskip
	
	(iv) Now, suppose that $\mathbf{X}$ is a zero-dimensional $P$-space. By Proposition \ref{s4p1}(5), $C(\mathbf{X}, \mathbb{R}_{disc})=C(\mathbf{X})$. Let us notice that, by Proposition \ref{s4p1}(4), $\mathbf{X}$ is strongly zero-dimensional. Therefore, it follows from Proposition \ref{s3p2}(iii) that $U_{\aleph_0}(\mathbf{X})=C(\mathbf{X})$. On the other hand, if $\mathbf{X}$ is a zero-dimensional space such that $U_{\aleph_0}(\mathbf{X})=C(\mathbf{X},\mathbb{R}_{disc})=C(\mathbf{X})$, then $\mathbf{X}$ is a $P$-space by (iii).
\end{proof}

\begin{remark}
	\label{s4r13}
	We know from \cite[Example 2.4]{gt} that there is a model $\mathcal{M}$ of $\mathbf{ZF}$ in which there exists a compact Suslin line $\mathbf{X}$ on which every continuous real-valued function is constant. So, $\mathbf{X}$ is regular but not completely regular in $\mathcal{M}$. Let us notice that, in $\mathcal{M}$, the Suslin line $\mathbf{X}$ has the property that the only subsets of $X$ are finite unions of sets that are either intervals or singletons of $\mathbf{X}$. This implies that, in $\mathcal{M}$, $\mathbf{X}$ is a $P$-space. However, $\mathbf{X}$ is not zero-dimensional. Therefore the statement ``Every regular $P$-space is zero-dimensional'' is unprovable in $\mathbf{ZF}$. It would be interesting to know more about the set-theoretic status of this statement. 
\end{remark}

\begin{theorem}
	\label{s4t14}
	$[\mathbf{ZF}]$ $\mathbf{WDC}$ implies that every regular $P$-space  is zero-dimensional, so also completely regular. In consequence, $\mathbf{WDC}$ implies that every regular $P$-space is strongly zero-dimensional and a regular $P$-space is zero-dimensional if and only if it is completely regular. 
\end{theorem}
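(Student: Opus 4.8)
The plan is to prove the substantive first assertion—that $\mathbf{WDC}$ forces every regular $P$-space $\mathbf{X}=\langle X,\tau\rangle$ to be zero-dimensional—and then to read off the remaining claims. To establish zero-dimensionality I would fix a point $x\in X$ and an open set $V$ with $x\in V$, and aim to produce a clopen set $U$ with $x\in U\subseteq V$; doing this for all $x$ and $V$ shows that $\mathcal{CO}(\mathbf{X})$ is a base, i.e.\ that $\mathbf{X}$ is zero-dimensional. The classical $\mathbf{ZFC}$ idea is to use regularity to build a sequence of open neighbourhoods of $x$ inside $V$ that shrinks in the strong sense $\cl_{\mathbf{X}}(W_{n+1})\subseteq W_{n}$, and then to set $U=\bigcap_{n\in\omega}W_{n}$. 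This $U$ is \emph{open} because $\mathbf{X}$ is a $P$-space (a countable intersection of open sets), while simultaneously
$$U=\bigcap_{n\in\omega}W_{n}=\bigcap_{n\in\omega}\cl_{\mathbf{X}}(W_{n}),$$
the second equality coming from $\cl_{\mathbf{X}}(W_{n+1})\subseteq W_{n}$, so that $U$ is also \emph{closed}. Hence $U$ is clopen with $x\in U\subseteq V$, as required.

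The only non-constructive step is the recursive choice of the $W_{n}$, and this is precisely the main obstacle and the place where $\mathbf{WDC}$ enters: ordinary recursion would need a dependent-choice selection of one neighbourhood at each stage, which need not exist in $\mathbf{ZF}$. To circumvent this I would set $E=\{W\in\tau:\ x\in W\subseteq V\}$ and define a binary relation $R$ on $E$ by $\langle W,W'\rangle\in R$ iff $\cl_{\mathbf{X}}(W')\subseteq W$. Regularity of $\mathbf{X}$ makes $R$ total on $E$: given $W\in E$, applying regularity to $x\in W$ yields an open $W'$ with $x\in W'\subseteq\cl_{\mathbf{X}}(W')\subseteq W\subseteq V$, so $W'\in E$ and $\langle W,W'\rangle\in R$. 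Thus $\mathbf{WDC}$ applies and delivers a sequence $(F_{n})_{n\in\omega}$ of non-empty finite subsets of $E$ such that for every $n$ and every $W\in F_{n}$ there is $W'\in F_{n+1}$ with $\cl_{\mathbf{X}}(W')\subseteq W$.

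From the finite branching produced by $\mathbf{WDC}$ I would recover an honest nested sequence by taking finite intersections: put $W_{n}=\bigcap F_{n}$. Since $F_{n}$ is a non-empty finite family of members of $E$, each $W_{n}$ is open, contains $x$, and is contained in $V$ (so $W_{n}\in E$). For the nesting, fix $n$ and any $W\in F_{n}$; choosing $W'\in F_{n+1}$ with $\cl_{\mathbf{X}}(W')\subseteq W$ and using $W_{n+1}\subseteq W'$ gives $\cl_{\mathbf{X}}(W_{n+1})\subseteq\cl_{\mathbf{X}}(W')\subseteq W$. As $W\in F_{n}$ was arbitrary, $\cl_{\mathbf{X}}(W_{n+1})\subseteq\bigcap F_{n}=W_{n}$, which is exactly the strong shrinking property needed above. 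The argument of the first paragraph then shows that $U=\bigcap_{n\in\omega}W_{n}$ is clopen with $x\in U\subseteq V$, so $\mathbf{X}$ is zero-dimensional.

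Finally I would read off the consequences. Zero-dimensionality implies complete regularity in $\mathbf{ZF}$ (every clopen set is a co-zero set, so $\mathcal{Z}^{c}(\mathbf{X})$ is a base), which gives the clause ``so also completely regular''. Combining complete regularity with Proposition \ref{s4p1}(4) yields that $\mathbf{X}$ is strongly zero-dimensional. The stated biconditional is then immediate: under $\mathbf{WDC}$ every regular $P$-space is already zero-dimensional and hence completely regular, so both sides of ``zero-dimensional if and only if completely regular'' hold, the forward implication being the $\mathbf{ZF}$ fact just used and the reverse implication following from the zero-dimensionality established above. I expect the verification details (totality of $R$, that $W_{n}\in E$, and the two displayed set-equalities) to be routine; the conceptual crux is the passage from the finite sets $F_{n}$ of $\mathbf{WDC}$ to the single sets $W_{n}=\bigcap F_{n}$, which replaces the missing dependent choice.
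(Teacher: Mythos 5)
Your proposal is correct and follows essentially the same route as the paper's proof: the same relation $\langle W,W'\rangle\in R\leftrightarrow \cl_{\mathbf{X}}(W')\subseteq W$ on the open neighbourhoods of $x$ inside $V$, the same application of $\mathbf{WDC}$ followed by taking $W_n=\bigcap F_n$ to recover a nested sequence, the same observation that $\bigcap_{n\in\omega}W_n=\bigcap_{n\in\omega}\cl_{\mathbf{X}}(W_n)$ is clopen by the $P$-space property, and the same appeal to Proposition \ref{s4p1}(4) for the remaining clauses. The only differences are presentational: you verify the totality of $R$ and the nesting step in more detail than the paper does.
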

\begin{proof}
	We assume $\mathbf{WDC}$ and consider any regular $P$-space $\mathbf{X}=\langle X, \tau\rangle$. We fix a point $x\in X$ and a set $W\in\tau$ such that $x\in W$. Let $\mathcal{B}(x)=\{U\in\tau: x\in U\subseteq W\}$. We define a binary relation $R$ on $\mathcal{B}(x)$ as follows: $(\forall U,V\in\mathcal{B}(x))(\langle U, V\rangle\in R\leftrightarrow \cl_{\mathbf{X}}(V)\subseteq U)$. By $\mathbf{WDC}$, we can fix a sequence $(\mathcal{F}_n)_{n\in\omega}$ of non-empty finite subfamilies of $\mathcal{B}(x)$ such that: ($\forall n\in\omega)(\forall U\in \mathcal{F}_n)(\exists V\in\mathcal{F}_{n+1})(\langle U, V\rangle\in R)$. For every $n\in\omega$, we define $U_n=\bigcap\mathcal{F}_n$. Then, for every $n\in\omega$, $U_n\in\mathcal{B}(x)$ and $\cl_{\mathbf{X}}(U_{n+1})\subseteq U_n$. Since $\mathbf{X}$ is a $P$-space, the set $U=\bigcap\limits_{n\in\omega}U_n$ is open in $\mathbf{X}$ by Proposition \ref{s4p1}(1). Of course, $x\in U\subseteq W$. Since $U=\bigcap\limits_{n\in\omega}\cl_{\mathbf{X}}(U_{n+1})$, the set $U$ is clopen in $\mathbf{X}$. Hence $\mathbf{X}$ is zero-dimensional, so also completely regular. It follows from Proposition \ref{s4p1}(4) that $\mathbf{X}$ is strongly zero-dimensional. To complete the proof, it suffices to apply Proposition \ref{s4p1}(4) once again.
\end{proof}

Theorem 2.1 of \cite{mis} asserts that a topological space $\mathbf{X}$ is a $P$-space if and only if, for every Lindel\"of space $\mathbf{Y}$, the projection $\pi_X:\mathbf{X}\times\mathbf{Y}\to\mathbf{X}$ is closed; however, the proof of this theorem in \cite{mis} is not a proof in $\mathbf{ZF}$. Therefore, let us modify Theorem 2.1 of \cite{mis} to Theorem \ref{s4t14} below and give a proof of it in $\mathbf{ZF}$.

\begin{theorem}
	\label{s4t15}
	$[\mathbf{ZF}]$ 
	\begin{enumerate}
		\item[(i)] (\cite[Theorem 4]{han}.) If $\mathbf{Y}$ is a topological space which is not countably compact, and  $\mathbf{X}$ is a topological space such that the projection $\pi_{X}:\mathbf{X}\times\mathbf{Y}\to\mathbf{X}$ is closed, then $\mathbf{X}$ is a $P$-space.
		\item[(ii)] If $\mathbf{X}=\langle X, \tau_X\rangle$ and $\mathbf{Y}=\langle Y, \tau_Y\rangle$ are topological spaces such that $\mathbf{X}$ is a $P$-space and $\mathbf{Y}$ is either  Lindel\"of or second-countable, then the projection $\pi_X:\mathbf{X}\times\mathbf{Y}\to\mathbf{X}$ is closed.
		\item[(iii)] If $\mathbf{Y}$ is a fixed second-countable space which is not countably compact, then, for every topological space  $\mathbf{X}$, it holds that $\mathbf{X}$ is a $P$-space if and only if the projection $\pi_{X}:\mathbf{X}\times\mathbf{Y}\to\mathbf{X}$ is closed.
		
		\item[(iv)] There exists a second-countable Lindel\"of $T_0$-space $\mathbf{Y}$ such that, for every topological space $\mathbf{X}$, it holds that $\mathbf{X}$ is a $P$-space if and only if the projection $\pi_X:\mathbf{X}\times\mathbf{Y}\to\mathbf{X}$ is closed. 
		
		\item[(v)] $\mathbf{CAC}(\mathbb{R})$ is equivalent to the statement: For every $P$-space $\mathbf{X}$, there exists a non-compact Lindel\"of $T_1$-space $\mathbf{Y}$ such that the projection $\pi_{X}:\mathbf{X}\times\mathbf{Y}\to\mathbf{X}$ is closed.
	\end{enumerate}
\end{theorem}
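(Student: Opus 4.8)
The plan is to dispatch (i)--(iv) by elementary $\mathbf{ZF}$ topology and then to observe that (ii) collapses (v) to a pure existence statement about Lindel\"of spaces. For (i), although it is quoted from \cite{han}, I would record a self-contained $\mathbf{ZF}$ argument (reused in (iii)--(iv)). Since $\mathbf{Y}$ is not countably compact, fix a countable open cover $\{V_n:n\in\omega\}$, which we may take increasing, with no finite subcover, and set $F_n=Y\setminus V_n$; then $(F_n)$ is decreasing with each $F_n\neq\emptyset$ and $\bigcap_nF_n=\emptyset$. Given $x\in X$ and a decreasing sequence $(W_n)$ of open neighbourhoods of $x$, put
\[
C=\bigcup_{n\in\omega}\bigl((X\setminus W_n)\times F_n\bigr)\subseteq X\times Y .
\]
Writing $a(z)=\min\{n:z\notin W_n\}$ (with $a(z)=\infty$ if $z\in\bigcap_nW_n$) and $b(y)=\min\{n:y\notin F_n\}$ (always finite), one checks $C=\{(z,y):a(z)<b(y)\}$ and that $C$ is closed, using $\{a\le m\}=X\setminus W_m$ (closed) and $\{b\le m\}=V_m$ (open). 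As every $F_m\neq\emptyset$, the values of $b$ are unbounded, so $\pi_X[C]=X\setminus\bigcap_nW_n$; closedness of $\pi_X$ then makes $\bigcap_nW_n$ open, so $x$ is a $P$-point and $\mathbf{X}$ is a $P$-space.

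For (ii) the key is a choice-free ``canonical tube''. Let $C\subseteq X\times Y$ be closed, $G=(X\times Y)\setminus C$, and fix $x\in X\setminus\pi_X[C]$, so $\{x\}\times Y\subseteq G$. If $\mathbf{Y}$ is second countable with base $\{B_k:k\in\omega\}$, set $N_k=\bigcup\{O\in\tau_X:O\times B_k\subseteq G\}$ and $K=\{k:x\in N_k\}$; the sets $B_k$ with $k\in K$ cover $Y$, and since $\mathbf{X}$ is a $P$-space the countable intersection $N=\bigcap_{k\in K}N_k$ is an open neighbourhood of $x$ by Proposition \ref{s4p1}(1). One verifies $N\times Y\subseteq G$, whence $N\subseteq X\setminus\pi_X[C]$ and $\pi_X[C]$ is closed. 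If instead $\mathbf{Y}$ is Lindel\"of, run the same argument with the cover $\{W\in\tau_Y:(\exists O\in\tau_X)\,x\in O,\ O\times W\subseteq G\}$ of $Y$, extract a countable subcover $\{W_n\}$ (this is exactly the Lindel\"of property, so no extra choice intervenes), and define $N_n$ analogously. Part (iii) is then immediate, since a second countable, not countably compact space meets the hypotheses of both (i) and (ii).

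For (iv) I would exhibit $\mathbf{Y}=\langle\omega,\omega+1\rangle$, that is, the set $\omega$ topologised by the ordinal $\omega+1$, whose open sets are the proper initial segments $n=\{0,\dots,n-1\}$ together with the whole space $\omega$. This space is $T_0$ but not $T_1$; its topology is \emph{countable}, so it is trivially second countable and, crucially, Lindel\"of in $\mathbf{ZF}$ (every open cover is a subfamily of a countable topology, hence already countable); and it is not countably compact, as witnessed by the cover $\{[0,n):n\in\omega\}$. Applying (i) and (ii) to this single $\mathbf{Y}$ gives, for every $\mathbf{X}$, that $\mathbf{X}$ is a $P$-space if and only if $\pi_X$ is closed, which is exactly (iv).

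For (v), the forward implication is easy: assuming $\mathbf{CAC}(\mathbb{R})$, the discrete space $\omega_{disc}$ is a non-compact $T_1$-space that is Lindel\"of (an open cover of $\omega$ gives, for each $n$, a non-empty family of covering sets inside $\mathcal{P}(\omega)\approx\mathbb{R}$, and $\mathbf{CAC}(\mathbb{R})$ selects one per $n$); being second countable, $\pi_X$ is closed for every $P$-space $\mathbf{X}$ by (ii). The reverse implication is the crux. Observe first that, by (ii), for any non-compact Lindel\"of $T_1$-space $\mathbf{Y}$ and any $P$-space $\mathbf{X}$ the projection is automatically closed; since $P$-spaces always exist, the displayed statement is equivalent to the bare assertion that \emph{some} non-compact Lindel\"of $T_1$-space exists. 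Thus everything reduces to showing in $\mathbf{ZF}$ that the existence of a non-compact Lindel\"of $T_1$-space implies $\mathbf{CAC}(\mathbb{R})$, and this is the main obstacle. I would proceed contrapositively: under $\neg\mathbf{CAC}(\mathbb{R})$ there is a non-separable subspace of $\mathbb{R}$ (the characterisation used in the proof of Theorem \ref{s3t5}), equivalently a failure of Lindel\"ofness on the line, and the plan is to argue that this failure precludes any non-compact Lindel\"of $T_1$-space at all. The delicate point—the step I expect to demand the most care—is this transfer: extracting from one abstract non-compact Lindel\"of $T_1$-space enough combinatorial data (for instance a canonically definable countable closed discrete configuration, or a reduction to the Lindel\"ofness of subspaces of $\mathbb{R}$) to recover full $\mathbf{CAC}(\mathbb{R})$ rather than a merely weaker fragment.
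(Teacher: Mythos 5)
Your treatment of (i)--(iv) is correct and follows essentially the paper's route: the canonical-tube argument for (ii) (your unions $N_k=\bigcup\{O\in\tau_X:O\times B_k\subseteq G\}$ are exactly the paper's sets $U_n$, formed without any appeal to choice, and the Lindel\"of case is handled identically), (iii) as the conjunction of (i) and (ii), and in (iv) the very same witness $\mathbf{Y}=\langle\omega,\omega+1\rangle$. Your self-contained verification of (i) --- encoding a decreasing sequence of neighbourhoods and the residual sets $F_n$ into the closed set $C=\{(z,y):a(z)<b(y)\}$ and reading off $\pi_X[C]=X\setminus\bigcap_n W_n$ --- is sound in $\mathbf{ZF}$ and is a small bonus, since the paper simply records that Hanai's original proof is already a $\mathbf{ZF}$ proof. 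The forward half of (v) is also fine: you use $\omega_{disc}$ where the paper uses $\mathbb{R}$, but both are non-compact Lindel\"of $T_1$-spaces under $\mathbf{CAC}(\mathbb{R})$, and then (ii) applies.

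The genuine gap is the reverse half of (v). You correctly reduce the displayed statement to the bare assertion that \emph{some} non-compact Lindel\"of $T_1$-space exists, but the implication ``there exists a non-compact Lindel\"of $T_1$-space $\Rightarrow\mathbf{CAC}(\mathbb{R})$'' is precisely the step you leave open, and it is not a routine verification: it is a theorem of Herrlich, which the paper invokes as \cite[Theorem 7.2]{her} (in contrapositive form: if $\mathbf{CAC}(\mathbb{R})$ fails, then \emph{every} Lindel\"of $T_1$-space is compact). Your sketched plan --- pass from $\neg\mathbf{CAC}(\mathbb{R})$ to a non-separable subspace of $\mathbb{R}$ and then ``argue that this failure precludes any non-compact Lindel\"of $T_1$-space at all'' --- merely restates the target; nothing in the proposal indicates how an abstract Lindel\"of $T_1$-space, having no a priori connection to $\mathbb{R}$, would be forced to be compact by a failure of choice on the real line, and you flag this transfer yourself as unresolved. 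As written, the proposal establishes (i)--(iv) and one direction of (v), but (v) remains incomplete unless you either cite Herrlich's theorem, as the paper does, or reproduce its proof.
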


\begin{proof}  We notice that (i) is equivalent to Theorem 4 of \cite{han} and the proof of Theorem 4 in \cite{han} is a proof in $\mathbf{ZF}$. Hence (i) holds. 
	
	To prove (ii)--(iv), we fix a topological space $\mathbf{X}=\langle X, \tau_X\rangle$.
	
	(ii) Suppose that $\mathbf{X}$ is a $P$-space and $\mathbf{Y}=\langle Y, \tau_Y\rangle$ is topological space such that $\mathbf{Y}$ is either  Lindel\"of or second-countable. Suppose that the projection $\pi_X:\mathbf{X}\times\mathbf{Y}\to\mathbf{X}$ is not closed. We fix a closed set $C$ in $\mathbf{X}$ such that the set $\pi_X[C]$ is not closed in $\mathbf{X}$. We fix $x\in\cl_{\mathbf{X}}(\pi_X[C])\setminus\pi_X[C]$. We fix a base $\mathcal{B}_Y$ of $\mathbf{Y}$ such that, if $\mathbf{Y}$ is second-contable, then $\mathcal{B}_Y$ is countable; if $\mathbf{Y}$ is Lindel\"of, then $\mathcal{B}_Y=\tau_Y$. Let $\mathcal{V}=\{V\in\mathcal{B}_{Y}: (\exists U\in\tau_X)(x\in U\wedge C\cap (U\times V)=\emptyset)\}$. Since $x\notin\pi_X[C]$ and $C$ is closed in $\mathbf{X}$, it follows that  $\mathcal{V}$ is an open cover of $\mathbf{Y}$. Under our assumptions about $\mathbf{Y}$ and $\mathcal{B}_Y$, there exists a family $\{V_n: n\in\omega\}$ of members of $\mathcal{V}$ such that $Y=\bigcup\limits_{n\in\omega}V_n$. For every $n\in\omega$, let $U_n=\bigcup\{W\in\tau_X: x\in W\wedge C\cap(W\times V_n)=\emptyset\}$. Since $\mathbf{X}$ is a $P$-space, the set $U=\bigcap\limits_{n\in\omega}U_n$ is an open neighborhood of $x$ in $\mathbf{X}$. Since $C\cap(U\times Y)=\emptyset$, we have $U\cap \pi_X[C]=\emptyset$. On the other hand, $U\cap\pi_X[C]\neq\emptyset$ because $x\in\cl_{\mathbf{X}}(\pi_X[C])$. The contradiction obtained shows that $\pi_X$ is closed. Hence (ii) holds.
	
	That (iii) holds, follows from (i) and (ii).
	
	(iv) Let $\tau=\omega+1$. Then $\tau$ is a topology on $\omega$ such that the space $\mathbf{Y}=\langle \omega, \tau\rangle$ is a non-compact, second-countable, Lindel\"of $T_0$-space. Hence (iv) follows from (iii).
	
	(v) Let $\mathbf{X}$ be a $P$-space. Assuming  $\mathbf{CAC}(\mathbb{R})$, we infer that, by \cite[Theorem 4.54]{her}, $\mathbb{R}$ is Lindel\"of and, in virtue of (ii),  $\pi_X: \mathbf{X}\times{\mathbb{R}}\to\mathbf{X}$ is closed. On the other hand, if there exists a non-compact Lindel\"of $T_1$-space, then $\mathbf{CAC}(\mathbb{R})$ holds by \cite[Theorem 7.2]{her}. Hence (iv) holds.
\end{proof}

\begin{corollary}
	\label{s4c16} 
	$[\mathbf{ZF}]$ For every topological space $\mathbf{X}$, the following are equivalent:
	\begin{enumerate}
		\item[(i)] $\mathbf{X}$ is a $P$-space;
		\item[(ii)] the projection $\pi_X:\mathbf{X}\times\mathbb{R}\to\mathbf{X}$ is closed;
		\item[(iii)] the projection $\pi_X: \mathbf{X}\times \mathbb{N}\to\mathbf{X}$ is closed.
	\end{enumerate}
\end{corollary}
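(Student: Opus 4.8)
The plan is to read the corollary straight off Theorem~\ref{s4t15}(iii), which already packages the desired equivalence for any single witness space: for a fixed second-countable, not countably compact $\mathbf{Y}$, a space $\mathbf{X}$ is a $P$-space if and only if $\pi_X:\mathbf{X}\times\mathbf{Y}\to\mathbf{X}$ is closed. So the only work is to check, for each of the two concrete spaces $\mathbb{R}$ and $\mathbb{N}$, that it is second-countable and fails to be countably compact, entirely within $\mathbf{ZF}$.

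First I would record the facts about $\mathbb{R}=\langle\mathbb{R},\tau_{nat}\rangle$. It is second-countable in $\mathbf{ZF}$: the family $\{(p,q): p,q\in\mathbb{Q},\ p<q\}$ is a countable base, and no choice is needed to enumerate it since $\mathbb{Q}$ is countable and hence well-orderable. Moreover $\mathbb{R}$ is not countably compact, because the countable open cover $\{(-n,n): n\in\mathbb{N}\}$ has no finite subcover. Thus $\mathbb{R}$ is a second-countable space that is not countably compact, and Theorem~\ref{s4t15}(iii), applied with $\mathbf{Y}=\mathbb{R}$, yields at once that $\mathbf{X}$ is a $P$-space if and only if $\pi_X:\mathbf{X}\times\mathbb{R}\to\mathbf{X}$ is closed. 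This is the equivalence (i)$\leftrightarrow$(ii).

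For (i)$\leftrightarrow$(iii) I would first note that, under the conventions of Section~\ref{s1.3}, $\mathbb{N}=\omega\setminus\{0\}$ is treated as a subspace of $\mathbb{R}$, and that this subspace is discrete: for each $n\in\mathbb{N}$ the interval $(n-\tfrac12,n+\tfrac12)$ meets $\mathbb{N}$ in exactly $\{n\}$, so every singleton is open, and this observation uses no choice. A countable discrete space is second-countable (the singletons form a countable base) and is not countably compact (the cover by singletons has no finite subcover). Hence $\mathbb{N}$ is again second-countable and not countably compact, and a second application of Theorem~\ref{s4t15}(iii), now with $\mathbf{Y}=\mathbb{N}$, gives that $\mathbf{X}$ is a $P$-space if and only if $\pi_X:\mathbf{X}\times\mathbb{N}\to\mathbf{X}$ is closed, i.e.\ (i)$\leftrightarrow$(iii). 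The two equivalences together establish that (i), (ii) and (iii) are equivalent.

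There is no genuine obstacle here, since all the substance resides in Theorem~\ref{s4t15}; the only points requiring care are the $\mathbf{ZF}$-level verifications that both witness spaces are second-countable and not countably compact, together with the remark that $\mathbb{N}$ inherits the discrete topology as a subspace of $\mathbb{R}$. Should one prefer not to invoke part~(iii) of the theorem directly, the same conclusion follows by combining part~(ii) (each witness space is even Lindel\"of, so closedness of $\pi_X$ holds whenever $\mathbf{X}$ is a $P$-space) with part~(i) (each witness space is not countably compact, so closedness of $\pi_X$ forces $\mathbf{X}$ to be a $P$-space).
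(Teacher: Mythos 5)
Your main argument is correct and is exactly the paper's (implicit) proof: the corollary is stated without proof precisely because it follows from Theorem~\ref{s4t15}(iii) applied twice, once with $\mathbf{Y}=\mathbb{R}$ and once with $\mathbf{Y}=\mathbb{N}$, after the routine $\mathbf{ZF}$-verifications that each is second-countable and not countably compact. One caveat about your closing alternative: the parenthetical claim that the witness spaces are ``even Lindel\"of'' is not a theorem of $\mathbf{ZF}$ --- by \cite[Theorem 7.2]{her}, which the paper itself invokes in the proof of Theorem~\ref{s4t15}(v), the Lindel\"ofness of $\mathbb{R}$ (or of $\mathbb{N}$, or of $\mathbb{Q}$) is equivalent to $\mathbf{CAC}(\mathbb{R})$, hence unprovable in $\mathbf{ZF}$. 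That remark is inessential, since part~(ii) of Theorem~\ref{s4t15} explicitly allows ``Lindel\"of \emph{or second-countable}'' and your witnesses satisfy the second-countability clause, which is what your main argument correctly uses.
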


The following proposition is trivial:

\begin{proposition}
	\label{s4p17}
	$[\mathbf{ZF}]$ If $\mathbf{X}$ is a topological space whose every singleton is of type $G_{\delta}$ (in particular, if $\mathbf{X}$ is first-countable), then the space $(\mathbf{X})_{\delta}$ is discrete, so, for every topological space $\mathbf{Y}$, the projection $\pi_{\delta}:(\mathbf{X})_{\delta}\times\mathbf{Y}\to (\mathbf{X})_{\delta}$ is closed.
\end{proposition}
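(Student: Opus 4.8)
The plan is to unwind the definition of $(\mathbf{X})_{\delta}$ and observe that the hypothesis makes every singleton a basic open set; after that both conclusions are purely formal, so I do not expect a genuine obstacle.

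First I would recall from Definition \ref{s1d19} that $\mathcal{G}_{\delta}(\mathbf{X})$ is, by construction, a base of $(\mathbf{X})_{\delta}=\langle X,\tau(\mathcal{G}_{\delta}(\mathbf{X}))\rangle$, so in particular every $G_{\delta}$-set of $\mathbf{X}$ is open in $(\mathbf{X})_{\delta}$. Then, fixing an arbitrary $x\in X$, the hypothesis that $\{x\}$ is of type $G_{\delta}$ in $\mathbf{X}$ says precisely that $\{x\}\in\mathcal{G}_{\delta}(\mathbf{X})$; hence $\{x\}$ is open in $(\mathbf{X})_{\delta}$. Since $x$ is arbitrary, every singleton is open in $(\mathbf{X})_{\delta}$, which is exactly the definition of discreteness. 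To justify the parenthetical ``in particular'', I would add that if $\mathbf{X}$ is first-countable and $T_1$ and $\{B_n:n\in\omega\}$ is a countable neighbourhood base at $x$, then $T_1$-separation yields $\bigcap_{n\in\omega}B_n=\{x\}$, so $\{x\}$ is a $G_{\delta}$-set and the main hypothesis is met.

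For the projection the key point is simply that the first factor $(\mathbf{X})_{\delta}$ is now discrete. I would argue that for \emph{any} subset $A\subseteq X\times Y$ the image $\pi_{\delta}[A]$ is a subset of $X$, and every subset of a discrete space is closed; consequently $\pi_{\delta}[A]$ is closed in $(\mathbf{X})_{\delta}$. In particular this holds whenever $A$ is closed in $(\mathbf{X})_{\delta}\times\mathbf{Y}$, so $\pi_{\delta}$ is a closed map for every topological space $\mathbf{Y}$, with no appeal to any form of choice beyond $\mathbf{ZF}$. The only mildly delicate spot is that the first-countable example tacitly requires a $T_1$ assumption; the core statement, phrased for spaces with $G_{\delta}$ singletons, is entirely formal.
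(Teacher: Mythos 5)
Your proof is correct and is exactly the routine unwinding the paper has in mind: the paper states Proposition \ref{s4p17} without proof, calling it trivial, and your argument (each singleton lies in $\mathcal{G}_{\delta}(\mathbf{X})$, hence is basic open in $(\mathbf{X})_{\delta}$, so $(\mathbf{X})_{\delta}$ is discrete, and then every subset of the discrete first factor is closed, making $\pi_{\delta}$ closed for any $\mathbf{Y}$) is the intended one. Your side remark that the parenthetical ``in particular, if $\mathbf{X}$ is first-countable'' tacitly requires a $T_1$-type assumption is also well taken --- e.g.\ the Sierpi\'nski space is first-countable yet has a non-$G_{\delta}$ singleton --- a minor imprecision in the paper's formulation.
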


\section{The forms of Definition \ref{s1d20}}
\label{s5}

Regarding Theorems \ref{s4t9}(i) and \ref{s4t10}(i), it is worth mentioning that the following theorem also holds:

\begin{theorem}
	\label{s5t1}
	$[\mathbf{ZF}]$
	\begin{enumerate}
		\item[(i)] $\mathbf{CAC}(\mathbb{R})$ implies that, for every subspace $\mathbf{X}$ of $\mathbb{R}$,
		the family $\mathcal{G}_{\delta}(\mathbf{X})$ is stable under countable intersections.
		\item[(ii)] If the family $\mathcal{G}_{\delta}(\mathbb{R})$ is stable under countable intersections, then every denumerable family of non-empty cuc subsets of $\mathbb{R}$ has a choice function, which implies $\mathbf{CAC}_{\omega}(\mathbb{R})$.
	\end{enumerate}
\end{theorem}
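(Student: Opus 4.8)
The plan is to handle the two parts by separate constructions, both resting on the fact that in $\mathbb{R}$ and its subspaces open sets can be coded canonically by subsets of a fixed countable set, so that the only genuine use of choice is the simultaneous selection of $G_\delta$- or $F_\sigma$-representations.

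For (i), I would first record that every subspace $\mathbf{X}=\langle X,\tau|_X\rangle$ of $\mathbb{R}$ carries, in $\mathbf{ZF}$, the canonical countable base $\mathcal{B}=\{(a,b)\cap X: a,b\in\mathbb{Q},\ a<b\}$ indexed by the well-orderable set $\mathbb{Q}^2$. Hence each open set $U$ of $\mathbf{X}$ is recoverable from the subset $\{(a,b)\in\mathbb{Q}^2:(a,b)\cap X\subseteq U\}$ of $\mathbb{Q}^2$, so a sequence $(U_m)_{m\in\omega}$ of open sets of $\mathbf{X}$ is coded by an element of $\mathcal{P}(\omega\times\mathbb{Q}^2)$; fixing canonical bijections $\omega\times\mathbb{Q}^2\cong\omega$ and a canonical injection $\mathcal{P}(\omega)\hookrightarrow\mathbb{R}$, I obtain one fixed injection $\iota$ from the set of all such sequences into $\mathbb{R}$. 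Given a countable family $\{G_n:n\in\omega\}\subseteq\mathcal{G}_{\delta}(\mathbf{X})$, for each $n$ let $\mathcal{S}_n$ be the non-empty set of sequences $(U_m)_{m\in\omega}$ of open sets of $\mathbf{X}$ with $G_n=\bigcap_{m\in\omega}U_m$. Then $\{\iota[\mathcal{S}_n]:n\in\omega\}$ is a denumerable family of non-empty subsets of $\mathbb{R}$, so $\mathbf{CAC}(\mathbb{R})$ yields a choice function; decoding gives, uniformly in $n$, a sequence $(U_{n,m})_{m\in\omega}$ with $G_n=\bigcap_{m}U_{n,m}$, whence $\bigcap_{n\in\omega}G_n=\bigcap_{n,m\in\omega}U_{n,m}\in\mathcal{G}_{\delta}(\mathbf{X})$.

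For (ii), the starting observation is that the complement in $\mathbb{R}$ of a countable set $E$ is a $G_{\delta}$-set, since $\mathbb{R}\setminus E=\bigcap\{\mathbb{R}\setminus\{e\}:e\in E\}$ is the intersection of a countable family of open sets (countability of the family follows from countability of $E$, with no choice). Hence for any single cuc set $C=\bigcup_{k\in\omega}E_k$ (fixing one representation, legitimate for a fixed set), $\mathbb{R}\setminus C=\bigcap_{k\in\omega}(\mathbb{R}\setminus E_k)$ is a countable intersection of $G_{\delta}$-sets, so by the assumed stability $\mathbb{R}\setminus C\in\mathcal{G}_{\delta}(\mathbb{R})$; thus every cuc subset of $\mathbb{R}$ is an $F_{\sigma}$-set. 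Given a denumerable family $\{C_n:n\in\omega\}$ of non-empty cuc subsets of $\mathbb{R}$, I would push them into disjoint intervals via the fixed increasing homeomorphisms $\psi_n:\mathbb{R}\to(n,n+1)$, setting $C_n^{\ast}=\psi_n[C_n]$ and $Z=\bigcup_{n\in\omega}C_n^{\ast}$. Each $C_n^{\ast}$ is cuc, so, applying the previous fact for each fixed $n$, $\mathbb{R}\setminus C_n^{\ast}\in\mathcal{G}_{\delta}(\mathbb{R})$; therefore $\{\mathbb{R}\setminus C_n^{\ast}:n\in\omega\}$ is a countable family of $G_{\delta}$-sets and, by stability, $\mathbb{R}\setminus Z=\bigcap_{n\in\omega}(\mathbb{R}\setminus C_n^{\ast})\in\mathcal{G}_{\delta}(\mathbb{R})$, i.e. $Z$ is an $F_{\sigma}$-set.

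The decisive step is to extract a choice function from a single $F_{\sigma}$-representation of $Z$. I would fix one sequence $(F_k)_{k\in\omega}$ of closed subsets of $\mathbb{R}$ with $Z=\bigcup_{k\in\omega}F_k$ (a single existential instantiation, permitted in $\mathbf{ZF}$). Since the intervals $(n,n+1)$ are disjoint, $C_n^{\ast}=Z\cap(n,n+1)=\bigcup_{k\in\omega}\bigl(F_k\cap(n,n+1)\bigr)$ is non-empty, so I set $k(n)=\min\{k:F_k\cap(n,n+1)\neq\emptyset\}$ and, using $(n,n+1)=\bigcup_{j\geq 2}[n+2^{-j},\,n+1-2^{-j}]$, let $j(n)$ be the least $j\geq 2$ with $F_{k(n)}\cap[n+2^{-j},n+1-2^{-j}]\neq\emptyset$. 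The set $F_{k(n)}\cap[n+2^{-j(n)},n+1-2^{-j(n)}]$ is non-empty and compact, so its minimum $p_n$ exists and lies in $C_n^{\ast}$; then $n\mapsto\psi_n^{-1}(p_n)$ is a choice function for $\{C_n:n\in\omega\}$. Finally, every denumerable subset of $\mathbb{R}$ is a non-empty cuc subset, so the displayed principle yields $\mathbf{CAC}_{\omega}(\mathbb{R})$. The hard part is exactly this last extraction: one must turn the merely existential $F_{\sigma}$-representation into canonical points, which the disjointification into separate intervals together with the minimum of a compact piece accomplishes; one must also take care that the per-$n$ facts ``$\mathbb{R}\setminus C_n^{\ast}\in\mathcal{G}_{\delta}(\mathbb{R})$'' are used only as properties and do not covertly require a simultaneous choice of cuc-representations.
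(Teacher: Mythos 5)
Your proposal is correct and follows essentially the same route as the paper: in (i) both proofs apply $\mathbf{CAC}(\mathbb{R})$ to the sets of $G_\delta$-representations after coding them into $\mathbb{R}$ (your rational-interval coding just makes explicit the paper's assertion that $\tau^{\omega}$ embeds into $\mathbb{R}$), and in (ii) both disjointify the cuc sets into the intervals $(n,n+1)$, use the stability hypothesis to get a single $F_\sigma$-representation of the union, and then canonically pick points from closed pieces in each interval. The only cosmetic difference is that the paper cites the Loeb property of $\mathbb{R}$ for the final selection, whereas you verify it directly by taking the minimum of a non-empty compact piece.
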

\begin{proof}
	(i) Let us assume that $\mathbf{CAC}(\mathbb{R})$ holds. Let $\tau$ be the natural topology of $\mathbf{X}$. Then $\tau$ is equipotent to a subset $\mathbb{R}$. Let $\{G_n: n\in\omega\}$ be a family of members of $\mathcal{G}_{\delta}(\mathbf{X})$. For every $n\in\omega$, let $\mathcal{V}_n= \{\psi\in \tau^{\omega}: G_n=\bigcap\limits_{k\in\omega}\psi(k)\}$. Since $|\mathbb{R}^\omega|=|\mathbb{R}|$ and $|\tau|\leq |\mathbb{R}|$, it follows that $\tau^{\omega}$ is equipotent to a subset of $\mathbb{R}$. Hence, it follows from $\mathbf{CAC}(\mathbb{R})$ that there exists a family $\{\psi_n: n\in\omega\}$ such that, for every $n\in\omega$, $\psi_n\in\mathcal{V}_n$.  The family $\mathcal{E}=\{\psi_n(k): n,k\in\omega\}$ is countable, $\mathcal{E}\subseteq \tau$ and $G=\bigcap\mathcal{E}$. Hence $\mathcal{G}_{\delta}(\mathbf{X})$ is stable under countable intersections.\medskip
	
	(ii) Let us fix a family $\mathcal{A}=\{A_n: n\in\mathbb{N}\}$ of non-empty cuc subsets of $\mathbb{R}$. Since, for every $n\in\mathbb{N}$, we can fix a homeomorphism of $\mathbb{R}$ onto the interval $(n, n+1)$, without loss of generality, we may assume that, for every $n\in\mathbb{N}$, $A_n\subseteq (n, n+1)$. Let $A=\bigcup\limits_{n\in\mathbb{N}}A_n$. Assume that the family $\mathcal{G}_{\delta}(\mathbb{R})$ is stable under countable intersections. Then the family $\mathcal{F}_{\sigma}(\mathbb{R})$ is stable under countable unions. This implies that, for every $n\in\mathbb{N}$, $A_n\in\mathcal{F}_{\sigma}(\mathbb{R})$ and, in consequence, $A\in\mathcal{F}_{\sigma}(\mathbb{R})$. Hence, there exists a family $\{F_m: m\in\mathbb{N}\}$ of closed sets of $\mathbb{R}$ such that $A=\bigcup\limits_{m\in\mathbb{N}}F_m$. For every pair $n,m\in\mathbb{N}$, we have $F_m\cap [n, n+1]\subseteq A_n$, and the set $F_m\cap [n, n+1]$ is closed in $\mathbb{R}$. For every $n\in\mathbb{N}$, we can define $m(n)=\min\{m\in\mathbb{N}: F_m\cap [n, n+1]\neq\emptyset\}$. Since $\mathbb{R}$ is Loeb, there exists $f\in\prod\limits_{n\in\mathbb{N}}(F_{m(n)}\cap [n, n+1])$. Then $f$ is a choice function of $\mathcal{A}$. 
\end{proof}

It is known that $\mathbf{CAC}_{\omega}(\mathbb{R})$ is false, for instance, in Sageev's Model I (see model $\mathcal{M}6$ in \cite{hr}). Therefore, the following corollary can be deduced from Theorem \ref{s5t1}(ii):

\begin{corollary}
	\label{s5c2} 
	The statement ``$\mathcal{G}_{\delta}(\mathbb{R})$ is stable under countable intersections''  is unprovable in $\mathbf{ZF}$.
\end{corollary}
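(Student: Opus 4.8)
The plan is to derive the corollary as an immediate consequence of Theorem \ref{s5t1}(ii) together with the independence fact recorded just above the corollary's statement. First I would recall that Theorem \ref{s5t1}(ii) establishes, in $\mathbf{ZF}$, the implication that if the family $\mathcal{G}_{\delta}(\mathbb{R})$ is stable under countable intersections, then $\mathbf{CAC}_{\omega}(\mathbb{R})$ holds. Thus the statement whose provability is in question is, provably in $\mathbf{ZF}$, at least as strong as $\mathbf{CAC}_{\omega}(\mathbb{R})$, and to refute its provability it suffices to exhibit a model of $\mathbf{ZF}$ in which it fails.

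Next I would invoke the known fact that $\mathbf{CAC}_{\omega}(\mathbb{R})$ fails in Sageev's Model I, namely the model $\mathcal{M}6$ of \cite{hr}. Arguing by contraposition inside that model: were $\mathcal{G}_{\delta}(\mathbb{R})$ stable under countable intersections in $\mathcal{M}6$, then the $\mathbf{ZF}$-provable implication of Theorem \ref{s5t1}(ii) would force $\mathbf{CAC}_{\omega}(\mathbb{R})$ to hold there as well, contradicting the defining property of $\mathcal{M}6$. Hence the stability statement is \emph{false} in $\mathcal{M}6$.

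Finally, since $\mathcal{M}6$ is a model of $\mathbf{ZF}$ in which the statement ``$\mathcal{G}_{\delta}(\mathbb{R})$ is stable under countable intersections'' fails, the statement cannot be a theorem of $\mathbf{ZF}$, which is exactly the assertion of the corollary. The only genuinely external ingredient here is the independence fact for $\mathcal{M}6$, which is taken from \cite{hr}; everything else is a direct application of the already-proved implication in Theorem \ref{s5t1}(ii) together with elementary propositional reasoning, so I do not anticipate any real obstacle beyond correctly identifying the witnessing model.
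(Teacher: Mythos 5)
Your proposal is correct and coincides with the paper's own argument: the paper likewise combines Theorem \ref{s5t1}(ii) with the failure of $\mathbf{CAC}_{\omega}(\mathbb{R})$ in Sageev's Model I ($\mathcal{M}6$ of \cite{hr}) to conclude that the stability statement fails in that model and hence is unprovable in $\mathbf{ZF}$. There is nothing to add.
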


Of course, if $\mathbf{X}$ is a metrizable space, then the family $\mathcal{Z}(\mathbf{X})$  is stable under countable intersections. 

\begin{theorem}
	\label{s5t3}
	$[\mathbf{ZF}]$
	\begin{enumerate}
		\item[(i)] $\mathbf{CMC}\rightarrow(\mathbf{G}(T_{3\frac{1}{2}},\mathcal{G}_{\delta\delta},\mathcal{G}_{\delta})\wedge\mathbf{Z}(CR, \mathcal{Z}_{\delta},\mathcal{Z})).$
		\item[(ii)] $\mathbf{G}(T_{3\frac{1}{2}},\mathcal{G}_{\delta\delta}, \mathcal{G}_{\delta})\rightarrow (\mathbf{UT}(\aleph_0, cuc, cuc)\wedge\mathbf{UT}(\aleph_0, cuf, cuf))$.
		\item[(iii)] $\mathbf{Z}(CR, \mathcal{Z}_{\delta},\mathcal{Z}))\rightarrow(\mathbf{UT}(\aleph_0, cuc, cuc)\wedge\mathbf{UT}(\aleph_0, cuf, cuf))$.
	\end{enumerate}
\end{theorem}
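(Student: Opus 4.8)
The plan is to reduce part (i) to an earlier result and to treat parts (ii) and (iii) by a single uniform construction, applied once with ``finite'' and once with ``countable''.

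For (i): Theorem \ref{s4t9}(i) already asserts that $\mathbf{CMC}$ implies that, for \emph{every} topological space $\mathbf{X}$, the families $\mathcal{Z}(\mathbf{X})$ and $\mathcal{G}_{\delta}(\mathbf{X})$ are stable under countable intersections. Restricting this conclusion to $T_{3\frac{1}{2}}$-spaces yields $\mathbf{G}(T_{3\frac{1}{2}},\mathcal{G}_{\delta\delta},\mathcal{G}_{\delta})$, and restricting it to completely regular spaces yields $\mathbf{Z}(CR,\mathcal{Z}_{\delta},\mathcal{Z})$; so (i) is immediate.

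For (ii) and (iii), the idea is to encode an instance of a union theorem into a compactification-type space in which the union in question becomes the complement of a single special point $\infty$, and to read off the desired decomposition from the fact that $\{\infty\}$ is forced into the relevant $\delta$-stable class. Let $\{X_n:n\in\omega\}$ be a family of sets, each of which is $cuf$ (resp.\ $cuc$), and put $X=\bigcup_{n\in\omega}X_n$; we may assume $X$ is infinite, since otherwise $X$ is trivially both $cuf$ and $cuc$. Fix $\infty\notin X$ and topologize $X^{\ast}=X\cup\{\infty\}$ by declaring every subset of $X$ open and taking as neighbourhoods of $\infty$ the sets $\{\infty\}\cup(X\setminus C)$ with $C$ finite (resp.\ $C$ countable); in the $cuf$ case this is exactly the Alexandroff space $\mathbf{X}(\infty)$ of Definition \ref{s1d17}. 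In either case the points of $X$ are isolated and every basic neighbourhood of $\infty$ is clopen, so $X^{\ast}$ is a zero-dimensional $T_1$-space and hence $T_{3\frac{1}{2}}$ (in $\mathbf{ZF}$, characteristic functions of clopen sets witness complete regularity with no appeal to choice). The key observation is that $\{\infty\}$ is a $G_{\delta}$-set in $X^{\ast}$ if and only if $X$ is $cuf$ (resp.\ $cuc$): writing $\{\infty\}=\bigcap_{k\in\omega}O_k$ with each $O_k$ open and containing $\infty$, the complements $X\setminus O_k$ are finite (resp.\ countable) with union $X$, and the converse is equally clear.

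The heart of the argument is that $\{\infty\}=\bigcap_{n\in\omega}(X^{\ast}\setminus X_n)$ and that each complement $X^{\ast}\setminus X_n$ already lies in the relevant class. Indeed, since $X_n$ is $cuf$ (resp.\ $cuc$), it is a countable union of finite (resp.\ countable) subsets of $X$, each of which is closed in $X^{\ast}$; hence $X_n$ is an $F_{\sigma}$-set and $X^{\ast}\setminus X_n\in\mathcal{G}_{\delta}(X^{\ast})$, and moreover a function equal to $2^{-j}$ on the $j$-th piece of such a decomposition and to $0$ elsewhere is continuous with zero-set $X^{\ast}\setminus X_n$, so $X^{\ast}\setminus X_n\in\mathcal{Z}(X^{\ast})$ as well. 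Applying $\mathbf{G}(T_{3\frac{1}{2}},\mathcal{G}_{\delta\delta},\mathcal{G}_{\delta})$ (for (ii)), or $\mathbf{Z}(CR,\mathcal{Z}_{\delta},\mathcal{Z})$ together with the inclusion $\mathcal{Z}(X^{\ast})\subseteq\mathcal{G}_{\delta}(X^{\ast})$ of Remark \ref{s1r9} (for (iii)), to the countable family $\{X^{\ast}\setminus X_n:n\in\omega\}$ shows that $\{\infty\}$ is a $G_{\delta}$-set, whence $X$ is $cuf$ (resp.\ $cuc$) by the observation above. Running the construction with finite neighbourhoods gives $\mathbf{UT}(\aleph_0,cuf,cuf)$ and with countable neighbourhoods gives $\mathbf{UT}(\aleph_0,cuc,cuc)$, i.e.\ the required conjunction in each of (ii) and (iii).

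The subtle point — and the reason the implication does not tacitly reinvoke choice — is that we never \emph{select} the finite/countable decompositions of the various $X_n$ simultaneously: for each fixed $n$ the mere \emph{existence} of a decomposition of $X_n$ suffices to place $X^{\ast}\setminus X_n$ into $\mathcal{G}_{\delta}(X^{\ast})$ (resp.\ $\mathcal{Z}(X^{\ast})$), and the entire task of ``gluing countably many decompositions together'' is discharged by the hypothesised stability axiom. I expect the only places needing genuine care to be the verification, in $\mathbf{ZF}$, that $X^{\ast}$ is $T_{3\frac{1}{2}}$ in the co-countable case and that the equivalence between $G_{\delta}$-ness of $\{\infty\}$ and the $cuf$/$cuc$ character of $X$ holds without any use of choice.
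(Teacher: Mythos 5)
Your proof of part (i) coincides with the paper's: both simply restrict Theorem \ref{s4t9}(i) to $T_{3\frac{1}{2}}$-spaces, respectively completely regular spaces. For (ii) and (iii) your argument is correct but takes a genuinely different route. The paper keeps the points at infinity separate: it forms the direct sum $\mathbf{X}=\bigoplus_{n\in\omega}\mathbf{X}_n$, where $\mathbf{X}_n=A_n\cup\{\infty_n\}$ carries the co-finite (cuf case) or co-countable (cuc case) topology, applies the stability hypothesis to $\{X\setminus A_n: n\in\omega\}$ to conclude that $A=\bigcup_{n\in\omega}A_n$ is $F_{\sigma}$ (resp.\ co-zero) in $\mathbf{X}$, and then needs a further step: write $A=\bigcup_{i\in\omega}H_i$ with each $H_i$ closed, and observe that each trace $H_i\cap A_n$, being a closed subset of $\mathbf{X}_n$ contained in $A_n$, is finite (resp.\ countable), so $A=\bigcup_{i,n\in\omega}(H_i\cap A_n)$ is as required. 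By collapsing all the points at infinity into a single point $\infty$, you arrange that the stability hypothesis directly forces $\{\infty\}\in\mathcal{G}_{\delta}(X^{\ast})$, and the cuf/cuc decomposition of $X$ is read off immediately from the complements of the witnessing open sets; the closed-trace step disappears. Your version is also more explicit about the conjunction in the conclusions: you run the construction twice (co-finite and co-countable neighbourhoods of $\infty$), whereas the paper's ``respectively'' pairs the $\mathcal{G}_{\delta}$-hypothesis with cuc and the $\mathcal{Z}$-hypothesis with cuf and leaves the cross pairings to the reader. The ZF-sensitive points you flag are indeed fine: zero-dimensional $T_1$ implies $T_{3\frac{1}{2}}$ via characteristic functions of clopen sets (the paper uses the same fact), and no simultaneous choice of decompositions is ever made. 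One small repair: in your zero-set verification the pieces must first be disjointified (replace the $j$-th piece $F_j$ by $F_j\setminus\bigcup_{i<j}F_i$) so that the function is well defined; continuity at $\infty$ then holds because the set where the function is $\geq 2^{-J}$ is finite (resp.\ countable), so its complement is a neighbourhood of $\infty$.
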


\begin{proof}
	That (i) holds follows from Theorem \ref{s4t9}(i). 
	
	To prove (ii) and (iii), let us suppose that $\mathbf{G}(T_{3\frac{1}{2}},\mathcal{G}_{\delta\delta}, \mathcal{G}_{\delta})$ (respectively, $\mathbf{Z}(CR, \mathcal{Z}_{\delta},\mathcal{Z})$) is true and fix a family $\mathcal{A}=\{A_n: n\in\omega\}$ of non-empty cuc sets (respectively, cuf sets).  Let $A=\bigcup\mathcal{A}$.  We can fix a sequence $(\infty_n)_{n\in\omega}$ of pairwise distinct elements such that, for every $n\in\omega$, $\infty_n\notin A$. For every $n\in\omega$, we put $X_n=A_n\cup\{\infty_n\}$ and $\tau_n=\mathcal{P}(A_n)\cup\{X_n\setminus C: C\in[A_n]^{\leq\omega}\}$ (respectively, $\tau_n=\mathcal{P}(A_n)\cup\{X_n\setminus C: C\in[A_n]^{<\omega}\}$). One can easily check that, for every $n\in\omega$,  $\tau_n$ is a topology on $X_n$ such that the space $\mathbf{X}_n=\langle X_n,\tau_n\rangle$ is a zero-dimensional $T_1$-space, so $\mathbf{X}_n$ is a $T_{3\frac{1}{2}}$-space. Let $\mathbf{X}=\bigoplus\limits_{n\in\omega}\mathbf{X}_n$. Since every direct sum of $T_{3\frac{1}{2}}$-spaces is a $T_{3\frac{1}{2}}$-space, $\mathbf{X}$ is a $T_{3\frac{1}{2}}$-space. For every $n\in\omega$, the set $A_n$ is a cuc set, so $A_n$ is of type $\mathcal{F}_{\sigma}$ in $\mathbf{X}$. In fact, for every $n\in\omega$, the set $A_n$ is a countable union of clopen sets in $\mathbf{X}$, so $A_n$ is a co-zero set of $\mathbf{X}$.  Since $\mathcal{G}_{\delta}(\mathbf{X})$ (respectively, $\mathcal{Z}(\mathbf{X})$) is stable under countable intersections, the set $A$ is of type $\mathcal{F}_{\sigma}$ (respectively, $A$ is a co-zero set) in $\mathbf{X}$. Hence, we can fix a family $\{H_i: i\in\omega\}$ of closed sets of $\mathbf{X}$ such that $A=\bigcup\limits_{i\in\omega}H_i$. For every pair $i,n$ of members of $\omega$, the set $H_{i,n}=H_i\cap A_n$ is closed in $\mathbf{X}_n$, which implies that $H_{i,n}$ is countable (respectively, finite).  Then $A=\bigcup\limits_{i,n\in\omega}H_{i,n}$ is a cuc set (respectively, cuf set),  Hence $\mathbf{G}(T_{3\frac{1}{2}},\mathcal{G}_{\delta\delta}, \mathcal{G}_{\delta})$ (respectively, $\mathbf{Z}(CR, \mathcal{Z}_{\delta},\mathcal{Z}))$) implies both $\mathbf{UT}(\aleph_0, cuc, cuc)$ and $\mathbf{UT}(\aleph_0, cuf, cuf)$.
\end{proof}

\begin{remark}
	\label{s5r4}
	It is known, for instance, from \cite{chhkr} that it holds in $\mathbf{ZF}$ and in $\mathbf{ZFA}$ that $\mathbf{MC}$ implies $\mathbf{UT}(\aleph_0, cuf, cuf)$. It was shown in \cite[Theorem 3.1]{ht} that, in $\mathbf{ZF}$,  $\mathbf{CMC}$ implies $\mathbf{UT}(\aleph_0, cuf, cuf)$. Our proof of Theorem \ref{s5t3} is an alternative proof that $\mathbf{CMC}$ implies $\mathbf{UT}(\aleph_0, cuf, cuf)$ in $\mathbf{ZF}$.
\end{remark}

\begin{remark}
	\label{s5r5}
	It was noticed in \cite[Theorem 3.3]{chhkr} that $\mathbf{UT}(\aleph_0, cuf, cuf)$ implies $\mathbf{CMC}_{\omega}$. Hence, it follows from Theorem \ref{s5t3} that the following implications are true in $\mathbf{ZF}$:
	\begin{enumerate}
		\item[(i)] $\mathbf{G}(T_{3\frac{1}{2}},\mathcal{G}_{\delta\delta}, \mathcal{G}_{\delta})\rightarrow\mathbf{CMC}_{\omega}$;
		\item[(ii)] $\mathbf{Z}(CR, \mathcal{Z}_{\delta},\mathcal{Z})\rightarrow \mathbf{CMC}_{\omega}$.
	\end{enumerate}
\end{remark}

The following proposition is obvious.

\begin{proposition}
	\label{s5p6}
	$[\mathbf{ZF}]$
	\begin{enumerate}
		\item[(i)] $\mathbf{PPP}$ if and only if every finite product of $P$-spaces is a $P$-space.
		\item[(ii)] $\mathbf{ABP}\rightarrow\mathbf{AB0P}$.
		\item[(iii)] $\mathbf{WBP}\rightarrow\mathbf{WB0P}$.
		\item[(iv)] $(\mathbf{WBP}\rightarrow\mathbf{ABP})\wedge(\mathbf{WB0P}\rightarrow\mathbf{AB0P})$.
		\item[(v)] $\mathbf{SBSqB}\rightarrow\mathbf{SB0SqB}$.
	\end{enumerate}
\end{proposition}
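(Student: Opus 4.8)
The plan is to dispatch all five parts essentially by unwinding the definitions, since each is either the restriction of a universal statement to a subclass of spaces or an immediate consequence of an implication already recorded in Proposition \ref{s1p14}.

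First I would handle parts (ii), (iii) and (v) together, as they share the same trivial pattern. In each case the conclusion is exactly the hypothesis asserted only over the narrower class of zero-dimensional $T_1$-spaces: $\mathbf{AB0P}$ is $\mathbf{ABP}$ restricted to zero-dimensional $T_1$-spaces, $\mathbf{WB0P}$ is $\mathbf{WBP}$ so restricted, and $\mathbf{SB0SqB}$ is $\mathbf{SBSqB}$ so restricted. Hence, if the unrestricted form holds for \emph{every} topological space, it holds in particular for every zero-dimensional $T_1$-space, which is the restricted form. No property of zero-dimensionality or of the $T_1$ axiom is needed beyond the fact that such spaces form a subclass of all topological spaces.

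Next I would prove part (iv). Both implications rest on Proposition \ref{s1p14}(iii): any base that is almost stable under countable intersections is weakly stable under countable intersections. So, assuming $\mathbf{WBP}$, I would take an arbitrary space $\mathbf{X}$ with a base $\mathcal{B}$ that is almost stable under countable intersections; by Proposition \ref{s1p14}(iii) the base $\mathcal{B}$ is weakly stable under countable intersections, whence $\mathbf{X}$ is a $P$-space by $\mathbf{WBP}$. This yields $\mathbf{ABP}$. The implication $\mathbf{WB0P}\rightarrow\mathbf{AB0P}$ is identical, applied to an arbitrary zero-dimensional $T_1$-space with an almost stable base.

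Finally I would treat part (i). The converse direction is immediate, since a product of two factors is already a finite product. The nontrivial direction is that $\mathbf{PPP}$ implies that every finite product of $P$-spaces is a $P$-space, which I would prove by induction on the number of factors: a single $P$-space is a $P$-space, and the two-factor case is exactly $\mathbf{PPP}$. For the inductive step, given $P$-spaces $\mathbf{X}_1,\dots,\mathbf{X}_{n+1}$, I would write the product as $(\mathbf{X}_1\times\cdots\times\mathbf{X}_n)\times\mathbf{X}_{n+1}$; by the induction hypothesis $\mathbf{X}_1\times\cdots\times\mathbf{X}_n$ is a $P$-space, and then $\mathbf{PPP}$ applied to it and $\mathbf{X}_{n+1}$ shows the bracketed product is a $P$-space. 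The only point requiring care --- and the closest thing to an obstacle --- is that this uses the canonical associativity homeomorphism identifying $\mathbf{X}_1\times\cdots\times\mathbf{X}_{n+1}$ with $(\mathbf{X}_1\times\cdots\times\mathbf{X}_n)\times\mathbf{X}_{n+1}$; since this homeomorphism is given by an explicit reindexing of coordinates and requires no choice, the induction goes through in $\mathbf{ZF}$.
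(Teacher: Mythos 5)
Your proposal is correct, and it matches the paper's treatment: the paper states Proposition \ref{s5p6} without proof, declaring it obvious, and your argument supplies exactly the intended routine details --- restriction to the subclass of zero-dimensional $T_1$-spaces for (ii), (iii), (v), the appeal to Proposition \ref{s1p14}(iii) for (iv), and induction via the choice-free associativity homeomorphism for (i).
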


\begin{remark}
	\label{s5r7} 
	In $\mathbf{ZF}$, the Cantor cube $\mathbf{2}^{\omega}$ can serve as an example of a countable product of $P$-spaces which is not a $P$-space. At this moment, we do not know if, in a model of $\mathbf{ZF}$, a finite product of $P$-spaces may fail to be a $P$-space.  
\end{remark}
It was noticed in \cite[proof of Corollary 5.8]{gh} that if $J$ is an infinite set and $\{\mathbf{X}_j: j\in J\}$ is a family of non-empty finite discrete spaces, then the product $\prod\limits_{j\in J}\mathbf{X}_j$ is not a $P$-space in $\mathbf{ZFC}$. That this $\mathbf{ZFC}$-result of \cite{gh} is unprovable in $\mathbf{ZF}$ follows from the following theorem and the fact that there are models of $\mathbf{ZF}$ in which $\mathbf{CAC}_{fin}$ fails.

\begin{theorem}
	\label{s5t8}
	$[\mathbf{ZF}]$
	\begin{enumerate} 
		\item[(i)]$\mathbf{CAC}$ if and only if, for every family $\{\mathbf{X}_n: n\in\omega\}$ of discrete spaces, it holds that if, for every $n\in\omega$, $\mathbf{X}_n$ consists of at least two points, then the product $\prod_{n\in\omega}\mathbf{X}_n$  is not a $P$-space.
		\item[(ii)] $\mathbf{CAC}_{fin}$ if and only if, for every family $\{\mathbf{X}_n: n\in\omega\}$ of finite discrete spaces, it holds that if, for every $n\in\omega$, $\mathbf{X}_n$ consists of at least two points, then the product $\prod_{n\in\omega}\mathbf{X}_n$  is not a $P$-space.
	\end{enumerate}
\end{theorem}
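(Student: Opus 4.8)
The plan is to reduce both equivalences to a single choice-free lemma describing exactly when a product of this kind is a $P$-space. I claim that for any family $\{\mathbf{X}_n: n\in\omega\}$ of discrete spaces with $|X_n|\geq 2$ for every $n$, the product $\mathbf{P}=\prod_{n\in\omega}\mathbf{X}_n$ is a $P$-space if and only if $\prod_{n\in\omega}X_n=\emptyset$. To prove this, first note that if the product is empty then the underlying space is empty, and the empty space is vacuously a $P$-space (its only $G_{\delta}$-set is $\emptyset$, which is open). Conversely, suppose $x\in\prod_{n\in\omega}X_n$. Since each $\mathbf{X}_n$ is discrete, each singleton $\{x(n)\}$ is open, so each set $\pi_n^{-1}[\{x(n)\}]$ is open in $\mathbf{P}$, and $\{x\}=\bigcap_{n\in\omega}\pi_n^{-1}[\{x(n)\}]$ is a $G_{\delta}$-set. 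However, $\{x\}$ is not open: any basic open neighbourhood of $x$ restricts only finitely many coordinates, so it leaves some coordinate $m$ unrestricted, and choosing a point $y(m)\in X_m\setminus\{x(m)\}$ (a single choice, legitimate in $\mathbf{ZF}$) while keeping the remaining coordinates equal to those of $x$ produces a point of that neighbourhood distinct from $x$. Hence, by Proposition \ref{s4p1}(1), $\mathbf{P}$ is not a $P$-space. No form of choice is used in this lemma.

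Granting the lemma, I would deduce (i) as follows. By the lemma, the displayed property---that every such product fails to be a $P$-space---is equivalent to the assertion that $\prod_{n\in\omega}X_n\neq\emptyset$ for every family of discrete spaces each having at least two points. That $\mathbf{CAC}$ implies this assertion is immediate, since $\mathbf{CAC}$ yields a choice function for $\{X_n: n\in\omega\}$, i.e. a point of the product. For the converse, given an arbitrary family $\{A_n: n\in\omega\}$ of non-empty sets, I would apply the assertion to the discrete spaces $\mathbf{X}_n=(A_n\times\{0,1\})_{disc}$, each of which has at least two points; a point of $\prod_{n\in\omega}(A_n\times\{0,1\})$ composed with the projection onto the first coordinate is a choice function for $\{A_n: n\in\omega\}$. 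Thus $\mathbf{CAC}$ holds, completing (i).

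For (ii) I would run exactly the same argument, restricting attention throughout to finite discrete spaces. The lemma applies verbatim to finite $\mathbf{X}_n$, so the displayed property for finite discrete spaces is equivalent to the non-emptiness of $\prod_{n\in\omega}X_n$ for all families of finite discrete spaces with at least two points, and this is what $\mathbf{CAC}_{fin}$ asserts. The only point to check is that the reduction stays within the finite setting: if each $A_n$ is finite and non-empty, then each $A_n\times\{0,1\}$ is finite with at least two elements, so the reverse implication recovers $\mathbf{CAC}_{fin}$ rather than full $\mathbf{CAC}$.

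The step I expect to require the most care is the correct treatment of the possibly empty product. In $\mathbf{ZF}$ a product of non-empty sets may be empty, and it is precisely this emptiness that is the whole obstruction to the product being a $P$-space; recognising that the empty space is vacuously a $P$-space, and that a non-empty product always carries a non-open $G_{\delta}$ singleton, is the crux of the proof. Once this is in place, both equivalences follow by the elementary $A_n\times\{0,1\}$ reduction.
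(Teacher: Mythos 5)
Your proof is correct, and its skeleton (the equivalence hinges on whether the product is empty, plus the $A_n\times\{0,1\}$ reduction) matches the paper's; where you genuinely diverge is in showing that a \emph{non-empty} product of discrete spaces, each with at least two points, is not a $P$-space. The paper argues: the countable product of discrete spaces is metrizable, $\mathbf{CAC}$ (resp.\ $\mathbf{CAC}_{fin}$) makes it non-empty and dense-in-itself, hence non-discrete, and then it invokes Proposition \ref{s4p1}(11) (every metrizable $P$-space is discrete). You instead exhibit the obstruction directly: a point $x$ of the product gives the $G_{\delta}$-set $\{x\}=\bigcap_{n\in\omega}\pi_n^{-1}[\{x(n)\}]$, which cannot be open because any basic neighbourhood leaves some coordinate unrestricted and a single choice in that coordinate produces a second point. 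Your route is more self-contained: it needs neither the metrizability of the product (which in $\mathbf{ZF}$ requires writing down a canonical product metric) nor Proposition \ref{s4p1}(11), and it isolates the choice-free content in a clean lemma, making transparent that the only role of $\mathbf{CAC}$/$\mathbf{CAC}_{fin}$ is to guarantee non-emptiness of the product. The paper's route buys brevity by reusing machinery already established; note that its phrase ``it follows from $\mathbf{CAC}$ that $\mathbf{X}$ is dense-in-itself'' is really the same observation you make, since choice supplies a point and single-coordinate perturbations need no further choice. Finally, the paper compresses the reverse direction to ``obvious''; your explicit $A_n\times\{0,1\}$ reduction is exactly the missing detail, since the given non-empty sets may be singletons and so cannot themselves serve as the factors.
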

\begin{proof} Since the empty space is a $P$-space, it is obvious that the statement ``Every denumerable product of discrete spaces (respectively, finite discrete spaces), each of which consisting of at least two points, is not a $P$-space'' implies $\mathbf{CAC}$ (respectively, $\mathbf{CAC}_{fin}$).
	
	Let us assume $\mathbf{CAC}$ (respectively, $\mathbf{CAC}_{fin}$) and consider a family $\{ X_n: n\in\omega\}$ such that, for every $n\in\omega$, $X_n$ consists of at least two elements (respectively, is a finite set consisting of at least two elements). For every $n\in\omega$, put  $\mathbf{X}_n=\langle X_n, \mathcal{P}(X_n)\rangle$. Let $X=\prod\limits_{n\in\omega}X_n$ and $\mathbf{X}=\prod\limits_{n\in\omega}\mathbf{X}_n$. The space $\mathbf{X}$ is metrizable. It follows from $\mathbf{CAC}$ that $\mathbf{X}$ is dense-in-itself. If $\mathbf{CAC}_{fin}$ holds and, for every $n\in\omega$, the set $X_n$ is finite, then $\mathbf{X}$ is also dense-in-itself. Hence, under our assumptions, $\mathbf{X}$ is not discrete. Since every metrizable $P$-space is discrete, $\mathbf{X}$ is not a $P$-space.
\end{proof}

\begin{theorem}
	\label{s5t9}
	$[\mathbf{ZF}]$
	\begin{enumerate}
		\item[(i)] $\mathbf{CMC}(\aleph_0,\infty)\rightarrow\mathbf{ABP}\rightarrow\mathbf{PPP}$.
		\item[(ii)] $\mathbf{CACCLO}\leftrightarrow\mathbf{PCACCLO}$.
		\item[(iii)]  $\mathbf{AB0P}\rightarrow\mathbf{CACCLO}\rightarrow\mathbf{IDI}(\mathbb{R})$. 
	\end{enumerate}
\end{theorem}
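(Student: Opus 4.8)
The plan is to handle the three items separately, leaning throughout on the characterization in Proposition \ref{s4p1}(12) that a space is a $P$-space exactly when $\tau\setminus\{\emptyset\}$ is almost stable under countable intersections. For (i), to prove $\mathbf{CMC}(\aleph_0,\infty)\rightarrow\mathbf{ABP}$ I would take a space $\mathbf{X}$ with an almost stable base $\mathcal{B}$, a countable family $\{W_n\}$ of open sets, and a point $x\in\bigcap_n W_n$; for each $n$ the set $A_n=\{B\in\mathcal{B}: x\in B\subseteq W_n\}$ is non-empty, so $\mathbf{CMC}(\aleph_0,\infty)$ yields non-empty countable $C_n\subseteq A_n$. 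The decisive move is that $E_n:=\bigcap C_n$ lies in $\mathcal{B}$: it is a countable intersection of basic sets containing $x$, hence non-empty, so almost stability forces $E_n\in\mathcal{B}$. Then $\{E_n:n\in\omega\}$ is a genuine countable subfamily of $\mathcal{B}$ and $\bigcap_n E_n\in\mathcal{B}$ is an open neighbourhood of $x$ inside $\bigcap_n W_n$; taking intersections collapses the multiple choice $C_n$ to a single usable set. For $\mathbf{ABP}\rightarrow\mathbf{PPP}$, given $P$-spaces $\mathbf{X},\mathbf{Y}$ I would note by Proposition \ref{s4p1}(12) that $\tau_X\setminus\{\emptyset\}$ and $\tau_Y\setminus\{\emptyset\}$ are almost stable, and check that the box base $\{U\times V\}$ of $\mathbf{X}\times\mathbf{Y}$ is almost stable, since $\bigcap_n(U_n\times V_n)=(\bigcap_n U_n)\times(\bigcap_n V_n)$ and each factor is basic or empty; $\mathbf{ABP}$ then declares the product a $P$-space.

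For (ii), $\mathbf{CACCLO}\rightarrow\mathbf{PCACCLO}$ is immediate, as a choice function restricts to a partial one. For the converse I would replace $\{A_i\}$ by the finite products $A_i'=\prod_{j\le i}A_j$, ordered lexicographically from $f(0),\dots,f(i)$. The key lemma is that the lexicographic order of finitely many strongly complete orders is again strongly complete: given a countable $S\subseteq A_i'$, take the maximum of its first projection, then the maximum of the fibre above that coordinate, and proceed through the finitely many coordinates. This $f'$ is explicitly definable, so $\{A_i'\}$ satisfies the hypothesis of $\mathbf{PCACCLO}$, which supplies a choice $g$ on some infinite $I\subseteq\omega$; since $I$ is unbounded in $\omega$, setting $h(j)=g(i(j))(j)$ with $i(j)=\min\{i\in I:i\ge j\}$ recovers a full choice function for $\{A_i\}$.

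For (iii) I would prove $\mathbf{CACCLO}\rightarrow\mathbf{IDI}(\mathbb{R})$ by contradiction: suppose $Y\subseteq\mathbb{R}$ is infinite and Dedekind-finite. Then every countable subset of $Y$ is finite, so the \emph{usual} order of $\mathbb{R}$ restricted to any subset of $Y$ is strongly complete, because a finite subset of $\mathbb{R}$ has a maximum. Splitting $Y$ into countably many disjoint pieces carrying these definable orders -- the unit blocks $Y\cap[n,n+1)$ when $Y$ is unbounded, or dyadic rings about an accumulation point (which exists in $\mathbf{ZF}$ for a bounded infinite subset of $\mathbb{R}$) when $Y$ is bounded -- yields infinitely many non-empty pieces, so $\mathbf{CACCLO}$ furnishes a choice function whose range is a denumerable subset of $Y$, contradicting Dedekind-finiteness.

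The hard part will be $\mathbf{AB0P}\rightarrow\mathbf{CACCLO}$. Writing $\le_i=f(i)$, I would build the space $X=\{p\}\cup\bigsqcup_i A_i$ with every point of $\bigsqcup_i A_i$ isolated and basic neighbourhoods of $p$ of the form $W_{F,t}=\{p\}\cup\bigcup_{i\in F}\{x\in A_i: t(i)\le_i x\}\cup\bigcup_{j\notin F}A_j$, ranging over $F\subseteq\omega$ and \emph{existing} $t\in\prod_{i\in F}A_i$. Strong completeness makes this base almost stable in $\mathbf{ZF}$: a countable intersection of such sets is again one of them, the new constraint on coordinate $i$ being the maximum $t^{\ast}(i)$ of the countably many given constraints, a definable datum requiring no choice; and the space is readily zero-dimensional $T_1$. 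Hence $\mathbf{AB0P}$ forces $\mathbf{X}$ to be a $P$-space, and the aim is to read a choice function off a basic neighbourhood $W_{F,t}\ni p$ sitting inside a suitable $G_{\delta}$ that meets each $A_i$ in a proper subset, so that $F=\omega$ and $t\in\prod_i A_i$ is exhibited. The obstacle I expect to dominate is producing, in $\mathbf{ZF}$ and without presupposing the very choice function sought, a $G_{\delta}$ whose forced openness pins down all coordinates (or, via (ii), infinitely many): unioning final segments over unnamed points tends to recover the whole coordinate, so the encoding probably must be more delicate, plausibly routed through $\mathbf{PCACCLO}$ together with Proposition \ref{s4p1}(13) on closed discrete countable subspaces of a $T_1$ $P$-space.
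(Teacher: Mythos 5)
Your treatments of (i) and (ii) are correct and essentially coincide with the paper's proof: the same device of intersecting the countable families supplied by $\mathbf{CMC}(\aleph_0,\infty)$ down to single basic sets (legitimate because the base is almost stable under countable intersections), the same box-base argument for $\mathbf{ABP}\rightarrow\mathbf{PPP}$, and the same passage to finite lexicographic products for $\mathbf{PCACCLO}\rightarrow\mathbf{CACCLO}$ (the paper asserts without proof the lemma you verify, that a finite lexicographic product of strongly complete linear orders is strongly complete; your fibre-by-fibre argument for it is right). Your proof of $\mathbf{CACCLO}\rightarrow\mathbf{IDI}(\mathbb{R})$ is also correct, and is a mild variant of the paper's: the paper first invokes the existence of a \emph{dense} Dedekind-finite subset of $\mathbb{R}$ and uses the blocks $(n,n+1)$, whereas you work with an arbitrary infinite Dedekind-finite $Y$ via unit blocks or dyadic annuli around an accumulation point; this is self-contained and fine, since the bisection proof that a bounded infinite subset of $\mathbb{R}$ has an accumulation point is a $\mathbf{ZF}$ proof, the restricted orders are definable, and the restriction of the usual order to any subset of a Dedekind-finite set is strongly complete.

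The genuine gap is $\mathbf{AB0P}\rightarrow\mathbf{CACCLO}$, and you diagnosed it yourself: in your ``star'' space, where the blocks $A_i$ sit side by side around $p$ and a neighborhood of $p$ cuts each coordinate down to a final segment, the union over all $x\in A_i$ of the corresponding basic neighborhoods recovers all of $A_i$, so the natural $G_{\delta}$ candidate is the whole space and no contradiction with $P$-ness can be extracted; no workable ``more delicate encoding'' is supplied, so the implication remains unproved. The paper's fix is to abandon the star configuration and \emph{concatenate} the blocks into a single linear order $A_0<A_1<A_2<\cdots<\infty$ (inside $A_i$ use $\leq_i$, elements of $A_i$ precede those of $A_j$ for $i<j$, and $\infty$ is the top), with base $\mathcal{B}=\{\{x\}: x\in A\}\cup\{[x,\infty]: x\in A\}$; this is a zero-dimensional $T_1$-space. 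Assuming $\mathbf{PCACCLO}$ fails (by your (ii) this suffices), two things now work in tandem. First, $\mathcal{B}$ is almost stable under countable intersections: given countably many sets $[y_U,\infty]$, the left endpoints can meet only finitely many blocks, since otherwise taking the maximum of the endpoints inside each block they meet (these maxima exist by strong completeness and are definable, hence require no choice) would yield a partial choice function of $\mathcal{A}$; consequently the set of endpoints has a maximum $z_0$ and the intersection is $[z_0,\infty]\in\mathcal{B}$. Second --- and this is precisely what your configuration cannot achieve --- the open set $V_i=\bigcup\{[x,\infty]: x\in A_i\}$ equals $A_i\cup A_{i+1}\cup\cdots\cup\{\infty\}$: a final segment anchored at an unnamed point of $A_i$ automatically excludes all \emph{earlier} blocks wholesale, so $\bigcap\limits_{i\in\omega}V_i=\{\infty\}$, which is not open because every basic neighborhood $[x,\infty]$ of $\infty$ meets $A$. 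Thus the space has an almost stable base but is not a $P$-space, refuting $\mathbf{AB0P}$. The structural lesson is that the choice-free way to make a union over unnamed points cut something out is to have each block's final segments chop off the preceding blocks, rather than attempting to chop inside each block.
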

\begin{proof}
	(i) To show that the first implication of (i) is true, we fix any topological space $\mathbf{X}$ which has an almost stable under countable intersections base $\mathcal{B}$. Assuming $\mathbf{CMC}(\aleph_0, \infty)$, we show that $\mathbf{X}$ is a $P$-space. To this aim, we fix any family $\mathcal{U}=\{U_n: n\in\omega\}$ of open sets of $\mathbf{X}$ with $\bigcap\mathcal{U}\neq\emptyset$. Let $x\in\bigcap\mathcal{U}$. For every $n\in\omega$, let $\mathcal{B}_n=\{B\in\mathcal{B}: x\in B\subseteq U_n\}$. By $\mathbf{CMC}(\aleph_0, \infty)$, there exists a family $\{\mathcal{V}_n: n\in\omega\}$ of non-empty countable sets such that, for every $n\in\omega$, $\mathcal{V}_n\subseteq \mathcal{B}_n$. For every $n\in\omega$, let $V_n=\bigcap\mathcal{V}_n$. Clearly, if $n\in\omega$, then $x\in V_n$ and, since $\mathcal{B}$ is almost stable under countable intersections, $V_n\in\mathcal{B}$ and, in consequence, $\bigcap\limits_{n\in\omega}V_n\in\mathcal{B}$. Since, in addition, $x\in\bigcap\limits_{n\in\omega}V_n\subseteq \bigcap\mathcal{U}$, we obtain that $\bigcap\mathcal{U}$ is open in $\mathbf{X}$. Hence $\mathbf{X}$ is a $P$-space. 
	
	To show that the second implication of (i) is also true, we assume $\mathbf{ABP}$ and fix any pair $\mathbf{X}_1, \mathbf{X}_2$ of non-empty $P$-spaces. For $i\in\{1,2\}$, let $\mathbf{X}_i=\langle X_i, \tau_i\rangle$ and $\mathcal{B}_i=\tau_i\setminus\{\emptyset\}$. Then, for $i\in\{1,2\}$, the family $\mathcal{B}_i$ is an almost stable under countable interesctions base of $\mathbf{X}_i$. Let $\mathcal{B}_0=\{U\times V: U\in\mathcal{B}_1\wedge V\in\mathcal{B}_2\}$. It is easily seen that $\mathcal{B}_0$ is an almost stable under countable intersections base of $\mathbf{X}_1\times\mathbf{X}_2$. By $\mathbf{ABP}$, $\mathbf{X}_1\times\mathbf{X}_2$ is a $P$-space.\medskip
	
	(ii) It suffices to show that $\mathbf{PCACCLO}$ implies $\mathbf{CACCLO}$. To this end, we fix a family $\{\langle A_i, \leq_i\rangle: i\in\omega\}$ such that, for every $i\in\omega$, $\leq_i$ is a strongly complete linear ordering on $A_i$. For every $n\in\mathbb{N}$, let $B_n=\prod_{i\in n}A_i$ and let $\preceq_n$ be the lexicographic ordering on $B_n$ corresponding to $\{\leq_i: i\in n\}$. Then, for every $n\in\mathbb{N}$, $\preceq_n$ is a strongly complete linear ordering on $B_n$. Assuming $\mathbf{PCACCLO}$, we can fix an infinite set $M\subseteq\mathbb{N}$ and a function $f\in\prod\limits_{n\in M}B_n$. As usual, for every $i\in\omega$, let $m(i)=\min\{m\in M: i\in m\}$ and $g(i)=f(m(i))(i)$. Then $g\in\prod_{i\in\omega}A_i$.\medskip
	
	(iii) Let us suppose that $\mathbf{IDI}(\mathbb{R})$ is false. Then there exists a Dedekind-finite subset $D$ of $\mathbb{R}$ such that $D$ is dense in $\mathbb{R}$. For every $n\in\mathbb{N}$, let $D_n=(n, n+1)\cap D$ and let $\leq_n$ be the usual linear ordering of $D_n$ inherited from the standard linear ordering $\leq$ of $\mathbb{R}$. Since, for every $n\in\mathbb{N}$, the set $D_n$ is Dedekind-finite, $\leq_n$ is a strongly complete linear ordering on $D_n$. Since $D$ contains no infinite countable subsets, $\prod\limits_{n\in\mathbb{N}}D_n=\emptyset$. This contradicts $\mathbf{CACCLO}$. Hence $\mathbf{CACCLO}$ implies $\mathbf{IDI}(\mathbb{R})$.
	
	Finally, assuming that $\mathbf{PCACCLO}$ is false, let us deduce that $\mathbf{AB0P}$ is also false. To this aim, suppose that $\mathcal{A}^{\ast}=\{\langle A_i, \leq_i\rangle: i\in\omega\}$ is a family of non-empty strongly complete linearly ordered sets such that the family $\mathcal{A}=\{A_i: i\in\omega\}$ is disjoint but does not have a partial choice function. Let $A=\bigcup\limits_{i\in\omega}A_i$ and let $\infty$ be an element such that $\infty\notin{A}$. We put $X=A\cup\{\infty\}$ and define a linear ordering $\leq$ on $X$ as follows. For every $a\in A$, $a\le\infty$. For every $i\in\omega$ and any pair $a,b$ of elements of $A_i$, $a\leq b$ if and only if $a\leq_i b$. For every pair $i,j$ of distinct elements of $\omega$, any $a\in A_i$ and $b\in A_j$, $a\leq b$ if and only if $i\in j$. Next, we define
	$$\mathcal{B}=\{\{x\}: x\in A\}\cup\{ [x, \infty]: x\in A\}$$
	where $[x, \infty]=\{y\in X: x\leq y\leq\infty\}$. We denote by $\tau$ the topology on $X$ such that $\mathcal{B}$ is a base of the space $\mathbf{X}=\langle X, \tau\rangle$. It is easily seen that $\mathbf{X}$ is a zero-dimensional $T_1$-space. Let us prove that the base $\mathcal{B}$ is almost stable under countable intersections. 
	
	Let $\mathcal{U}$ be a denumerable family of subsets of $\mathcal{B}$ such that the set $G=\bigcap\mathcal{U}$ is non-empty. We are going to show that $G\in\mathcal{B}$. We notice that at most one member of $\mathcal{U}$ can be a singleton. If there exists $U_0\in\mathcal{U}$ such that $U_0$ is a singleton, then $G=U_0\in\mathcal{B}$. Suppose that none of the members of $\mathcal{U}$ are singletons. Then, for every $U\in\mathcal{U}$, there is $y_U\in A$ such that $U=[y_U, \infty]$. The set $Y=\{y_U: U\in\mathcal{U}\}$ is a denumerable subset of $A$. Since $\mathcal{A}$ has no partial choice function, there exists $z\in A$ such that, for every $U\in\mathcal{U}$, $y_U\leq z$. Since, for every $i\in\omega$, $\leq_i$ is a strongly complete linear ordering on $A_i$, there exists $z_0\in A$ such that $z_0=\sup\{y_U: U\in\mathcal{U}\}$ in $\langle X, \leq\rangle$. Then $G=[z_0, \infty]\in\mathcal{B}$. This shows that $\mathcal{B}$ is almost stable under countable intersections. Using similar arguments, one can show that $\mathcal{B}$ is even strongly stable under countable intersections. Now, we notice that, for every $i\in\omega$, the set $V_i=\bigcup\{[x, \infty]: x\in A_i\}$ is open in $\mathbf{X}$; however, the set $\bigcap\limits_{i\in\omega}V_i=\{\infty\}$ is not open in $\mathbf{X}$, so $\mathbf{X}$ is not a $P$-space. This completes the proof of (iii).
\end{proof}

\begin{theorem}
	\label{s5t10}
	$[\mathbf{ZF}]$
	\begin{enumerate}
		\item[(i)] $\mathbf{DC}\rightarrow\mathbf{SBSqB}\rightarrow\mathbf{SB0SqB}\rightarrow\mathbf{CAC}$.
		\item[(ii)] $(\mathbf{ABP}\wedge\mathbf{Z}(CR,\mathcal{Z}_{\delta},\mathcal{Z}))\rightarrow \mathbf{CRZOP}$.
		\item[(iii)] $\mathbf{WB0P}\rightarrow (\mathbf{CAC}(\mathbb{R})\wedge\mathbf{vDCP}(\omega))$.
	\end{enumerate}
\end{theorem}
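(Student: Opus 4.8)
The three parts are largely independent, so the plan is to treat them separately, beginning with (i). The implication $\mathbf{SBSqB}\to\mathbf{SB0SqB}$ is immediate, since a zero-dimensional $T_1$-space with a strongly stable base is in particular a space with a strongly stable base. For $\mathbf{DC}\to\mathbf{SBSqB}$, fix a space $\mathbf{X}$ with a strongly stable base $\mathcal{B}$ and a denumerable family $\{U_n:n\in\omega\}$ of dense open sets, and prove openness and non-emptiness of $\bigcap_n U_n$ separately. For openness, note $\mathbf{DC}\to\mathbf{CAC}\to\mathbf{CMC}(\aleph_0,\infty)$, so by Theorem \ref{s5t9}(i) $\mathbf{ABP}$ holds; since a strongly stable base is almost stable (Proposition \ref{s1p14}(i)), $\mathbf{X}$ is a $P$-space and hence $\bigcap_n U_n$, being a $G_\delta$, is open. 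For non-emptiness I would run the Baire-category construction: applying $\mathbf{DC}$ to the set of finite descending selections $(B_0,\dots,B_m)$ with $B_0\subseteq U_0$, $B_{j+1}\subseteq B_j\cap U_{j+1}$ and $B_j\in\mathcal{B}$ (density makes this relation total), I obtain a decreasing sequence $(B_n)$ in $\mathcal{B}$ with $B_n\subseteq U_n$. This is a countable centered subfamily of $\mathcal{B}$, so strong stability gives $\bigcap_n B_n\in\mathcal{B}$, whence $\bigcap_n B_n\neq\emptyset$ (as $\emptyset\notin\mathcal{B}$) and $\bigcap_n B_n\subseteq\bigcap_n U_n$.

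The main obstacle of the whole theorem is $\mathbf{SB0SqB}\to\mathbf{CAC}$. Here the plan is, from a denumerable disjoint family $\mathcal{A}=\{A_n:n\in\omega\}$ of non-empty sets, to construct in $\mathbf{ZF}$ a zero-dimensional $T_1$-space $\mathbf{X}$ carrying a strongly stable base together with a denumerable family of dense open sets whose intersection is forced non-empty by the strong quasi Baire property and from which a choice function can be read off. The point I expect to be genuinely hard is that strong stability of the base must be verifiable \emph{without} any choice (as in the strongly-complete-order space of Theorem \ref{s5t9}(iii), where the needed countable suprema exist canonically), while simultaneously $\mathbf{X}$ must have enough non-isolated structure for dense open sets to encode a selection across all the $A_n$. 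Naive product or box topologies fail precisely because, in $\mathbf{ZF}$, countable intersections of basic neighbourhoods of a limit point need not stay in, nor even be open relative to, the base. Reconciling these two demands is the crux.

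For (ii), assume $\mathbf{ABP}$ and $\mathbf{Z}(CR,\mathcal{Z}_\delta,\mathcal{Z})$ and let $\mathbf{X}=\langle X,\tau\rangle$ be completely regular with every zero-set open. First I would observe that every co-zero set is clopen: it is open, and its complement, a zero-set, is open by hypothesis. Since every clopen $C$ is the zero-set of the continuous indicator $\chi_{X\setminus C}$, every co-zero set is itself a zero-set, so the co-zero base is contained in $\mathcal{Z}(\mathbf{X})$ and therefore $\mathcal{Z}(\mathbf{X})$ is a base of $\mathbf{X}$. By $\mathbf{Z}(CR,\mathcal{Z}_\delta,\mathcal{Z})$ the family $\mathcal{Z}(\mathbf{X})$ is stable under countable intersections, and since $\emptyset\in\mathcal{Z}(\mathbf{X})$ this base is almost stable under countable intersections. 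Then $\mathbf{ABP}$ gives that $\mathbf{X}$ is a $P$-space, which is exactly $\mathbf{CRZOP}$.

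For (iii), both consequences of $\mathbf{WB0P}$ I would extract from a ``block space''. Given $\{A_i\}$, set $X=A\cup\{\infty\}$ with $A=\bigcup_i A_i$, linearly ordered so the blocks increase to the top point $\infty$, with base $\{\{x\}:x\in A\}\cup\{[x,\infty]:x\in A\}$; this is a zero-dimensional $T_1$-space that is never a $P$-space, since $\{\infty\}=\bigcap_i V_i$ (with $V_i=\bigcup\{[x,\infty]:x\in A_i\}$) is a non-open $G_\delta$. By $\mathbf{WB0P}$ the base cannot be weakly stable, so some countable intersection of basic sets fails to be open; because within a single block countable bounded selections have canonical suprema (from the $\mathbb{Z}$-order for $\mathbf{vDCP}(\omega)$, from completeness of $\mathbb{R}$ for $\mathbf{CAC}(\mathbb{R})$), such a failure can only arise from a countable set meeting infinitely many blocks, i.e.\ an infinite partial choice function. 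For $\mathbf{CAC}(\mathbb{R})$ I would apply this to the family of finite products $B_n=\prod_{i\le n}A_i$, realized as subsets of $\mathbb{R}$; an infinite partial choice on $\{B_n\}$ upgrades canonically, via $i\mapsto\min\{n:n\ge i,\ n\in I\}$ and the $i$-th coordinate, to a full choice function for $\{A_n\}$. This upgrade is the one subtle step for $\mathbf{vDCP}(\omega)$, where the lexicographic product destroys the $\mathbb{Z}$-order type, so passing from the infinite partial choice function to a full van Douwen choice function is the point requiring extra care.
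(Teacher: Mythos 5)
Your part (ii), the first two implications of (i), and the $\mathbf{CAC}(\mathbb{R})$ half of (iii) are fine. Part (ii) is essentially the paper's own proof; your route to $\mathbf{DC}\rightarrow\mathbf{SBSqB}$ (openness via $\mathbf{DC}\rightarrow\mathbf{CAC}\rightarrow\mathbf{CMC}(\aleph_0,\infty)$, Theorem \ref{s5t9}(i) and Proposition \ref{s1p14}(i); non-emptiness via a $\mathbf{DC}$-chain in $\langle\mathcal{B},\subseteq\rangle$) is a sound variant of the paper's, which runs the chain argument twice and in fact gets density, i.e.\ strong Baireness; and for $\mathbf{CAC}(\mathbb{R})$ your finite-product reduction to a family without partial choice function, followed by the block space, is a legitimate alternative to the paper's reduction, which instead invokes the equivalence of $\mathbf{CAC}(\mathbb{R})$ with partial choice for families of \emph{dense} subsets of $\mathbb{R}$. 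The genuine gap is that you never prove $\mathbf{SB0SqB}\rightarrow\mathbf{CAC}$, the heart of (i); you only explain why it is hard. The paper proves it as follows: take a family $\{A_n\colon n\in\omega\}$ of infinite sets with no partial choice function, a point $\infty\notin\bigcup_n A_n$, form $\mathbf{X}=\prod_{n\in\omega}\mathbf{A}_n(\infty)$, take as base $\mathcal{B}$ the boxes $\prod_n V_n$ in which each $V_n$ is either a singleton of $A_n$ or $X_n\setminus F$ with $F\in[A_n]^{<\omega}$, and $V_n=X_n$ for all but finitely many $n$; it claims $\mathcal{B}$ is strongly stable under countable intersections (for a centered $\mathcal{U}$, the set $E$ of coordinates where some factor is a singleton would give a partial choice function if infinite), and then applies $\mathbf{SB0SqB}$ to the dense open sets $\pi_n^{-1}[A_n]$, any point of whose intersection is a choice function.

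You should know, however, that the obstacle you singled out---``countable intersections of basic neighbourhoods of a limit point need not stay in, nor even be open relative to, the base''---is exactly where the paper's argument is defective. On coordinates $n\notin E$ all factors are cofinite sets $X_n\setminus F_{i,n}$, and the paper asserts their intersection is $X_n$; in truth it is $X_n\setminus\bigcup_i F_{i,n}$, and ``no partial choice function'' neither makes each $\bigcup_i F_{i,n}$ finite nor makes the set of $n$ with $\bigcup_i F_{i,n}\neq\emptyset$ finite. Concretely, the proof's hypotheses are satisfied by families whose members are all Dedekind-infinite (replace any no-partial-choice family $\{C_n\}$ by $\{C_n\times\omega\}$); fixing $c\in C_0$ and $f(j)=\langle c,j\rangle$, the centered family $U_i=(X_0\setminus\{f(0),\dots,f(i)\})\times\prod_{n\geq 1}X_n$ lies in $\mathcal{B}$ while $\bigcap_i U_i\notin\mathcal{B}$, so $\mathcal{B}$ is \emph{not} strongly stable and the argument collapses for such a family. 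A repair needs a family with no partial \emph{multiple} choice function and quasi Dedekind-finite members, which $\neg\mathbf{CAC}$ alone does not supply; so your instinct that this implication is the crux is, if anything, vindicated against the paper itself. By contrast, your worry about $\mathbf{vDCP}(\omega)$ in (iii) dissolves: the block-space argument uses neither $\mathbb{Z}$-likeness nor completeness of the blocks, only that they are non-empty linearly ordered sets. Indeed, if every member of a countable $\mathcal{U}\subseteq\mathcal{B}$ is a final segment $[y_U,\infty]$, then $\bigcap\mathcal{U}$ fails to be open only when $\{y_U\colon U\in\mathcal{U}\}$ has no upper bound below $\infty$, which (all blocks being non-empty) forces it to meet infinitely many blocks and hence yields a partial choice function. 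Since finite lexicographic products of linear orders are linear orders, your product trick applies verbatim to a van Douwen family and gives $\mathbf{WB0P}\rightarrow\mathbf{vDCP}(\omega)$, completing what the paper leaves as an ``exercise for the interested readers''.
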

\begin{proof}
	(i) To show that $\mathbf{DC}$ implies $\mathbf{SBSqB}$, we suppose that $\mathbf{X}$ is a topological space which has a base $\mathcal{B}$ strongly stable under countable intersections, and we prove that $\mathbf{DC}$ implies that $\mathbf{X}$ is even strongly Baire. To this aim, we consider a family $\mathcal{D}=\{D_n: n\in\omega\}$ of dense open sets of $\mathbf{X}$ and a set $W\in\mathcal{B}$. Let $D=\bigcap\limits_{n\in\omega}D_n$ and, for every $n\in\omega$, let $\mathcal{D}_n=\{B\in\mathcal{B}: B\subseteq D_n\cap W\}$. We notice that, for every $n\in\omega$, the set $\mathcal{D}_n$ is dense in the partially ordered set $\langle\mathcal{B}, \subseteq\rangle$. Hence,  by $\mathbf{DC}$ and  \cite[Form \text{[43 C]}]{hr}, there exists a sequence $(B_n)_{n\in\omega}$ of members of $\mathcal{B}$ such that, for every $n\in\omega$, $B_{n+1}\subseteq B_n\in \mathcal{D}_n$. Let $U=\bigcap\limits_{n\in\omega}B_n$. Since $\mathcal{B}$ is strongly stable under countable intersections, we have $U\in\mathcal{B}$. Hence $U\neq\emptyset$. Since $U\subseteq D\cap W$, the set $D$ is dense in $\mathbf{X}$. To see that $D$ is open in $\mathbf{X}$, we fix a point $x\in D$. By $\mathbf{DC}$ and \cite[Form \text{[43 C]}]{hr}, there exists a sequence $(B_{n,x})_{n\in\omega}$ of members of $\mathcal{B}$ such that, for every $n\in\omega$, $x\in \mathcal{B}_{n+1,x}\subseteq B_{n,x}\in\{B\in\mathcal{B}: x\in B\subseteq D_n\}$. Then $x\in\bigcap\limits_{n\in\omega}B_{n,x}\subseteq D$ and $\bigcap\limits_{n\in\omega}B_{n,x}\in\mathcal{B}$. In consequence, $D$ is open in $\mathbf{X}$. Hence the first implication of (i) is true. The second implication of (i) is obvious (see Proposition \ref{s5p6}(v)). 
	
	To show that $\mathbf{SB0SqB}$ implies $\mathbf{CAC}$, suppose that $\mathbf{CAC}$ is false. Then there exists a family $\mathcal{A}=\{A_n: n\in\omega\}$ of infinite sets such that $\mathcal{A}$ has no partial choice function. Let $\infty$ be an element such that $\infty\notin\bigcup\mathcal{A}$. For every $n\in\omega$, let $X_n= A_n\cup\{\infty\}$ and $\mathbf{X}_n=\mathbf{A}_n(\infty)$. For every $n\in\omega$, let $\mathcal{B}_n=\{\{x\}: x\in A_n\}\cup\{ X_n\setminus F: F\in[A_n]^{<\omega}\}$. Let $X=\prod\limits_{n\in\omega}X_n$ and $\mathbf{X}=\prod\limits_{n\in\omega}\mathbf{X}_n$. Then $\mathbf{X}$ is a zero-dimensional $T_1$-space. Let
	$$\mathcal{B}=\{\prod\limits_{n\in\omega}V_n: ((\forall n\in\omega)V_n\in\mathcal{B}_n)\wedge((\exists k\in\omega)(\forall n\in\omega)(k\in n\rightarrow V_n=X_n))\}.$$
	Clearly, $\mathcal{B}$ is a base of $\mathbf{X}$, and $\emptyset\notin\mathcal{B}$. To show that the base $\mathcal{B}$ is strongly stable under countable intersections, we fix a centered family $\mathcal{U}=\{U_i: i\in\omega\}$ of members of $\mathcal{B}$. For every $i\in\omega$, let $U_i=\prod_{n\in\omega}V_{i,n}$ where, for every $n\in\omega$, $V_{i,n}\in\mathcal{B}_n$ and there is $k(i)\in\omega$ such that, for every $n\in\omega$, if $k(i)\in n$, then $V_{i,n}=X_n$. For every $i\in\omega$, let 
	$$E_i=\{n\in\omega: (\exists x\in A_{n}) V_{i,n}=\{x\}\}.$$ 
	Let $E=\bigcup\limits_{i\in\omega}E_i$. Since $\mathcal{U}$ is centered, it follows that, for every $n\in E$, the family $\{V_{i,n}: i\in\omega\}$ is also centered. Hence, for every $n\in E$, there exists a unique $x(n)\in A_n$ such that, for every $i\in\omega$, $V_{i,n}=\{x(n)\}$. Thus, if $E$ is infinite, by assigning to every $n\in E$ the point $x(n)$, we obtain a partial choice function of $\mathcal{A}$. This is impossible. In consequence, the set $E$ is finite. Therefore, if $U=\bigcap\limits_{i\in\omega}U_i$, then $U=\prod\limits_{n\in\omega}W_n$ where, for every $n\in E$, $W_n=\{x(n)\}$ and, for every $n\in\omega\setminus E$, $W_n=X_n$. Hence $U\in\mathcal{B}$. This proves that $\mathcal{B}$ is strongly stable under countable intersections. Of course, for every $n\in\omega$, the set $\pi_n^{-1}[A_n]$ is open and dense in $\mathbf{X}$. If $\mathbf{SB0qB}$ holds, then $\bigcap\limits_{n\in\omega}\pi_n^{-1}[A_n]\neq\emptyset$. However, every element of $\bigcap\limits_{n\in\omega}\pi_n^{-1}[A_n]$ is a choice function of $\mathcal{A}$. This  proves that $\mathbf{SB0qB}$ implies $\mathbf{CAC}$.\medskip
	
	(ii) Assume that $\mathbf{ABP}$ and $\mathbf{Z}(CR,\mathcal{Z}_{\delta},\mathcal{Z}))$ are both true. Let $\mathbf{X}$ be a completely regular space such that every zero-set of $\mathbf{X}$ is open in $\mathbf{X}$. Then the family $\mathcal{Z}(\mathbf{X})$ is a base of $\mathbf{X}$. By $\mathbf{Z}(CR,\mathcal{Z}_{\delta},\mathcal{Z}))$, the base $\mathcal{Z}(\mathbf{X})$ is stable under countable intersections. This, together with $\mathbf{ABP}$, implies that $\mathbf{X}$ is a $P$-space.\medskip
	
	(iii) Let us show that $\mathbf{WB0P}$ implies $\mathbf{CAC}(\mathbb{R})$. To this aim, we apply \cite[Theorem 3.14]{kw0} asserting that $\mathbf{CAC}(\mathbb{R})$ is equivalent to the statement: ``Every denumerable family of dense subsets of $\mathbb{R}$ has a partial choice function''. Hence, assuming that $\mathbf{CAC}(\mathbb{R})$ is false, we can fix a family $\mathcal{A}=\{A_n: n\in\omega\}$ of subsets of $\mathbb{R}$ such that $\mathcal{A}$ has no partial choice function and, for every $n\in\omega$, $A_n$ is a dense subspace of the interval $(n, n+1)$ equipped with the natural topology inherited from $\mathbb{R}$. We choose an element $\infty\notin\bigcup\mathcal{A}$. We put $X=\bigcup\mathcal{A}\cup\{\infty\}$. We extend the standard linear order $\leq$ on $\bigcup\mathcal{A}$ inherited from $\langle\mathbb{R}, \leq\rangle$ to a linear order $\leq$ on $X$ by requiring that, for every $x\in\bigcup\mathcal{A}$, $x<\infty$. Now, we mimic and modify the proof of the first implication of Theorem \ref{s5t9}(iii). Namely, we put
	$$\mathcal{B}=\{\{x\}: x\in\bigcup\mathcal{A}\}\cup\{[x, \infty]: x\in\bigcup\mathcal{A}\}$$
	where, for every $x\in\bigcup\mathcal{A}$,  $[x,\infty]=\{ y\in\bigcup\mathcal{A}: x\leq y\leq\infty\}$.  We consider the topology $\tau$ on $X$ such that $\mathcal{B}$ is a base of the topological space $\mathbf{X}=\langle X, \tau\rangle$. Then $\mathbf{X}$ is a zero-dimensional $T_1$-space. To prove that the base $\mathcal{B}$ of $\mathbf{X}$ is weakly stable under countable intersections, we fix a denumerable family $\mathcal{U}$ of $\mathcal{B}$ such that $\bigcap\mathcal{U}\neq\emptyset$. Clearly, if there exist $U_0\in\mathcal{U}$ and $x_0\in\bigcup\mathcal{A}$ such that $U_0=\{x_0\}$, then $\bigcap\mathcal{U}=U_0\in\mathcal{B}$. Therefore, in much the same way, as in the proof of Theorem \ref{s5t9}(iii), let us suppose that none of the members of $\mathcal{U}$ is a singleton. For every $U\in\mathcal{U}$, let $y_U\in\mathcal{A}$ be such that $U=[y_U, \infty]$. Since $\mathcal{A}$ does not have a partial choice function and $\mathcal{U}$ is denumerable, the set $\{n\in\omega: (\exists U\in\mathcal{U}) y_U\in (n, n+1)\}$ is finite. Hence, there exists $z\in\bigcup\mathcal{A}$ such that, for every $U\in\mathcal{U}$, $y_U\leq z$. Let $z_0=\sup\{y_U: U\in\mathcal{U}\}$ where the supremum is taken in $\mathbb{R}$. We notice that if $z_0\in\bigcup\mathcal{A}$, then $\bigcap\mathcal{U}=[z_0,\infty]\in\mathcal{B}$. Suppose that $z_0\in\mathbb{R}\setminus\bigcup\mathcal{A}$. Let $n_0\in\omega$ be such that $z_0\in (n_0, n_0+1]$. It follows from the density of $A_{n_0}$ in $(n_0, n_0+1)$ that $\bigcap\mathcal{U}=\bigcup\{ [t, \infty]: t\in A_{n_0}\cup A_{n_0+1}\wedge z_0<t\}$. In this case, $\bigcap\mathcal{U}\in\tau$. In consequence, the base $\mathcal{B}$ is weakly stable under countable intersections. Now, we observe that, for every $n\in\omega$, the set $V_n=\bigcup\{[x,\infty]: x\in A_n\}$ is open in $\mathbf{X}$; however, $\bigcap\limits_{n\in\omega}V_n=\{\infty\}\notin\tau$. This shows that $\mathbf{X}$ is not a $P$-space. Hence $\mathbf{WB0P}$ implies $\mathbf{CAC}(\mathbb{R})$. By a slight modification of these arguments and the proof of Theorem \ref{s5t9}(iii), one can show that $\mathbf{WB0P}$ implies $\mathbf{vDCP}(\omega)$. We shall leave the details as an exercise for the interested readers.
\end{proof}

\begin{corollary}
\label{s5c10}
$[\mathbf{ZF}]$  $$(\mathbf{SB0SqB}\rightarrow\mathbf{ABP})\wedge(\mathbf{ABP}\nrightarrow \mathbf{SB0SqB}).$$
\end{corollary}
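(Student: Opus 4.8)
The plan is to obtain the implication $\mathbf{SB0SqB}\to\mathbf{ABP}$ with no new construction, simply by chaining results already proved in this section, and to settle the non-implication $\mathbf{ABP}\nrightarrow\mathbf{SB0SqB}$ by exhibiting a model of $\mathbf{ZF}$ that separates the two forms. For the implication, first I would invoke the last link of Theorem \ref{s5t10}(i), namely $\mathbf{SB0SqB}\to\mathbf{CAC}$. Next I observe that $\mathbf{CAC}$ trivially implies $\mathbf{CMC}(\aleph_0,\infty)$: given a family $\{A_i:i\in\omega\}$ of non-empty sets together with a choice function $c$, the singletons $B_i=\{c(i)\}$ are non-empty countable subsets of the $A_i$ and thus witness $\mathbf{CMC}(\aleph_0,\infty)$. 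Finally I apply the first implication of Theorem \ref{s5t9}(i), namely $\mathbf{CMC}(\aleph_0,\infty)\to\mathbf{ABP}$. Composing these three steps yields the chain $\mathbf{SB0SqB}\to\mathbf{CAC}\to\mathbf{CMC}(\aleph_0,\infty)\to\mathbf{ABP}$, which is exactly the required implication.

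For the non-implication, the guiding observation is that the two forms are bounded on opposite sides by choice principles of different strength: by Theorem \ref{s5t9}(i) every model of $\mathbf{CMC}(\aleph_0,\infty)$ satisfies $\mathbf{ABP}$, while by Theorem \ref{s5t10}(i) every model of $\mathbf{SB0SqB}$ satisfies $\mathbf{CAC}$. Hence it suffices to produce a model of $\mathbf{ZF}$ in which $\mathbf{CMC}(\aleph_0,\infty)$ holds but $\mathbf{CAC}$ fails: in such a model $\mathbf{ABP}$ is true, whereas $\mathbf{SB0SqB}$, which would force the false $\mathbf{CAC}$, must fail. Since $\mathbf{CMC}$ (Form $126$) implies $\mathbf{CMC}(\aleph_0,\infty)$ (each multiple-choice value being a non-empty finite, hence countable, subset), it is in fact enough to find a model of $\mathbf{CMC}\wedge\neg\mathbf{CAC}$; and because $\mathbf{CAC}\to\mathbf{CAC}_{fin}$, one may even replace $\neg\mathbf{CAC}$ by the stronger-to-refute $\neg\mathbf{CAC}_{fin}$.

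The main obstacle is therefore locating the separating model. The natural candidate is a permutation model of $\mathbf{ZFA}$ whose set of atoms is a denumerable disjoint union of pairs, $A=\bigcup_{n\in\omega}\{a_n,b_n\}$, with group the permutations fixing each pair setwise and with finite supports; here $\{\{a_n,b_n\}:n\in\omega\}$ has no choice function, so $\mathbf{CAC}_{fin}$ and hence $\mathbf{CAC}$ fail, while multiple choice for countable families holds because one may always select whole (finite) orbits, giving $\mathbf{CMC}$. The delicate point I must verify is precisely that $\mathbf{CMC}$ survives while $\mathbf{CAC}_{fin}$ collapses, as it is this exact separation that drives the argument; and since the corollary concerns $\mathbf{ZF}$, the situation must then be transferred to a genuine $\mathbf{ZF}$ model by the Pincus transfer method, or, more economically, one simply cites a ready-made $\mathbf{ZF}$ model from \cite{hr} in which Form $126$ holds and Form $8$ fails. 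Either route furnishes a model witnessing $\mathbf{ABP}\wedge\neg\mathbf{SB0SqB}$, completing the proof.
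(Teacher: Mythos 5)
Your proof is correct and follows essentially the same route as the paper: the implication is obtained by chaining $\mathbf{SB0SqB}\rightarrow\mathbf{CAC}$ (Theorem \ref{s5t10}(i)), the trivial step $\mathbf{CAC}\rightarrow\mathbf{CMC}(\aleph_0,\infty)$, and $\mathbf{CMC}(\aleph_0,\infty)\rightarrow\mathbf{ABP}$ (Theorem \ref{s5t9}(i)), and the non-implication is settled by a model of $\mathbf{ZF}$ satisfying $\mathbf{CMC}(\aleph_0,\infty)\wedge\neg\mathbf{CAC}$. The paper takes exactly your ``economical'' option, citing Pincus' Model II (model $\mathcal{M}29$ of \cite{hr}) rather than rebuilding the second Fraenkel model and invoking a transfer argument, so your ready-made-model fallback is precisely the paper's proof.
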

\begin{proof}
That $\mathbf{SB0SqB}$ implies $\mathbf{ABP}$ follows directly from Theorems \ref{s5t9}(i) and \ref{s5t10}(i). In Pincus' Model II (that is, model $\mathcal{M}29$ in \cite{hr}), $\mathbf{CAC}$ is false and $\mathbf{CMC}(\aleph_0, \infty)$ is true. Therefore, by Theorems \ref{s5t9}(i) and \ref{s5t10}(i), the implication $\mathbf{ABP}\rightarrow \mathbf{SB0SqB}$ is false in $\mathcal{M}29$.
\end{proof}

\begin{remark}
	\label{s5r10}
In view of Theorem \ref{s5t10}(i) and Corollary \ref{s5c10}, $\mathbf{DC}$ implies $\mathbf{ABP}$ in $\mathbf{ZF}$. This can be also shown by arguing similarly to the second part of the proof of Theorem \ref{s5t10}(i). That $\mathbf{ABP}$ does not imply $\mathbf{DC}$ in $\mathbf{ZF}$ follows from Corollary \ref{s5c10}.
\end{remark}

\begin{definition}
	\label{s5d11}
	We say that a topological space $\mathbf{X}$ is \emph{base-hereditarily Lindel\"of} if it has a base $\mathcal{B}$ such that, for every $B\in\mathcal{B}$ and every family $\mathcal{G}\subseteq\mathcal{B}$ such that $B\subseteq\bigcup\mathcal{G}$, there exists a countable $\mathcal{H}\subseteq\mathcal{G}$ such that $B\subseteq\mathcal{H}$.
\end{definition}

The following theorem sheds a little more light on the difficulties with establishing the status of $\mathbf{PPP}$ in $\mathbf{ZF}$.  

\begin{theorem}
	\label{s5t12}
	$[\mathbf{ZF}]$ Let  $\mathbf{Y}=\langle Y, \tau_Y\rangle$ be a $P$-space. 
	\begin{enumerate}
		\item[(i)] If $\mathbf{X}$ is a $P$-space  whose every point has a well-orderable base of neighborhoods, then $\mathbf{X}\times\mathbf{Y}$ is a $P$-space. In particular, if $\mathbf{X}$ is a first-countable $P$-space, then $\mathbf{X}\times\mathbf{Y}$ is a $P$-space.
		
		\item[(ii)] If $\mathbf{X}$ is a base-hereditarily Lindel\"of $P$-space, then $\mathbf{X}\times\mathbf{Y}$ is a $P$-space.
		
		\item[(iii)]  If $X$ is an infinite quasi Dedekind-finite set, and $\infty$ is an element with $\infty\notin X$, then both $\mathbf{X}_{cof}\times\mathbf{Y}$ and $\mathbf{X}(\infty)\times\mathbf{Y}$ are $P$-spaces.
	\end{enumerate}
\end{theorem}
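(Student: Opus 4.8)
By Proposition \ref{s4p1}(1), it suffices in each part to fix a countable family $\{W_n:n\in\omega\}$ of open subsets of the relevant product together with a point $(x,y)\in\bigcap_{n\in\omega}W_n$, and to produce open sets $U\ni x$ (in the first factor) and $V\ni y$ in $\mathbf{Y}$ with $U\times V\subseteq\bigcap_{n\in\omega}W_n$; openness of $\bigcap_n W_n$ then follows. Throughout I will write $(W_n)_{x'}=\{y'\in Y:(x',y')\in W_n\}$ for the (open) vertical section over a point $x'$, and, for an open set $B$ in the first factor, $V_{n,B}=\inter_{\mathbf{Y}}(\bigcap_{x'\in B}(W_n)_{x'})$, which is the largest open $V'$ with $B\times V'\subseteq W_n$; note $y\in V_{n,B}$ exactly when some box $B\times V'\subseteq W_n$ has $y\in V'$. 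The essential difficulty is that the naive argument selects, for each $n$, one box $U_n\times V_n\subseteq W_n$ through $(x,y)$ and so invokes $\mathbf{CAC}$; the three hypotheses on $\mathbf{X}$ are precisely what make the selection canonical.

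For (i), the plan is to fix once a neighbourhood base $\{B_i:i\in I\}$ of open sets at $x$ and a well-ordering $\leq_I$ of $I$. For each $n$ the set $\{i\in I:y\in V_{n,B_i}\}$ is non-empty, so I let $i_n$ be its $\leq_I$-least element and put $P_n=V_{n,B_{i_n}}$; the sequences $(B_{i_n})_n$ and $(P_n)_n$ are then canonical. Since $\mathbf{X}$ and $\mathbf{Y}$ are $P$-spaces, $U=\bigcap_n B_{i_n}$ and $V=\bigcap_n P_n$ are open, contain $x$ and $y$ respectively, and satisfy $B_{i_n}\times P_n\subseteq W_n$, hence $U\times V\subseteq\bigcap_n W_n$. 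The first-countable case is immediate, a countable neighbourhood base being well-orderable.

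For (ii), let $\mathcal{B}$ be a base witnessing base-hereditary Lindel\"ofness. First I would set $\mathcal{G}_n=\{B\in\mathcal{B}:y\in V_{n,B}\}$ and $O_n=\bigcup\mathcal{G}_n$; each $O_n$ is open with $x\in O_n$, so $O=\bigcap_n O_n$ is open and contains $x$ as $\mathbf{X}$ is a $P$-space. I then pick $\tilde B\in\mathcal{B}$ with $x\in\tilde B\subseteq O$, set $U=\tilde B$, and define the \emph{canonical} open sets $V_n^{\ast}=\inter_{\mathbf{Y}}(\bigcap_{x'\in\tilde B}(W_n)_{x'})$. The crux is to show $y\in V_n^{\ast}$: as $\tilde B\subseteq O_n=\bigcup\mathcal{G}_n$, base-hereditary Lindel\"ofness gives a countable $\mathcal{H}_n\subseteq\mathcal{G}_n$ covering $\tilde B$, and then $V'=\bigcap_{B\in\mathcal{H}_n}V_{n,B}$ is open (a countable intersection in the $P$-space $\mathbf{Y}$), contains $y$, and satisfies $\tilde B\times V'\subseteq W_n$, so $V'\subseteq V_n^{\ast}$. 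The decisive observation is that this uses only the \emph{existence} of $\mathcal{H}_n$, for each fixed $n$, to certify membership $y\in V_n^{\ast}$ in an already-defined set, so the sets $V_n^{\ast}$ form a canonical sequence and no simultaneous choice of the $\mathcal{H}_n$ (hence no $\mathbf{CAC}$) is incurred. Finally $V=\bigcap_n V_n^{\ast}$ is open in $\mathbf{Y}$, contains $y$, and $U\times V\subseteq\bigcap_n W_n$.

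For (iii), I would derive everything from (ii). By Theorem \ref{s4t7}, the hypothesis that $X$ is infinite and quasi Dedekind-finite makes both $\mathbf{X}_{cof}$ and $\mathbf{X}(\infty)$ $P$-spaces. Each is also base-hereditarily Lindel\"of: using the base of non-empty cofinite sets for $\mathbf{X}_{cof}$, and $\{\{x\}:x\in X\}\cup\{X(\infty)\setminus F:F\in[X]^{<\omega}\}$ for $\mathbf{X}(\infty)$, any cover of a basic set by basic sets has a finite subcover, since a single member already omits only finitely many points, which finitely many further members cover. Applying (ii) with these spaces in the role of $\mathbf{X}$ yields that $\mathbf{X}_{cof}\times\mathbf{Y}$ and $\mathbf{X}(\infty)\times\mathbf{Y}$ are $P$-spaces. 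I expect the main obstacle to be exactly the choice-free thickening in (ii) --- arranging that the auxiliary sets $V_n^{\ast}$ depend only on $\tilde B$ and $W_n$, so that base-hereditary Lindel\"ofness is used to verify membership rather than to build a sequence; the rest rests on the two $P$-space hypotheses keeping countable intersections of open sets open.
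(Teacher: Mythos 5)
Your proof is correct and follows essentially the same route as the paper's: in (i) canonical least-index selection from a well-ordered neighbourhood base plus the $P$-property of both factors; in (ii) shrinking to a single basic set $\tilde B$ via the $P$-property of $\mathbf{X}$ and then, for each $n$ separately, certifying (not constructing) membership of $y$ in a canonically defined maximal vertical open set by means of a countable subcover and the $P$-property of $\mathbf{Y}$, so that no simultaneous choice is incurred; in (iii) reduction to (ii) after checking that $\mathbf{X}_{cof}$ and $\mathbf{X}(\infty)$ are base-hereditarily Lindel\"of (a point the paper dismisses as obvious and you rightly verify). The only cosmetic difference is your use of $V_{n,B}=\inter_{\mathbf{Y}}\bigl(\bigcap_{x'\in B}(W_n)_{x'}\bigr)$ where the paper uses the union of all admissible open vertical sets through the point; these are interchangeable canonical witnesses.
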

\begin{proof}
	Let $\mathbf{X}=\langle X, \tau_X\rangle$.
	
	(i) Suppose that $\mathbf{X}$ is a $P$-space whose every point has a well-orderable base of neighborhoods. Let $\{U_n: n\in\omega\}$ be a family of open sets of $\mathbf{X}\times\mathbf{Y}$, and suppose that $\langle x,y\rangle\in \bigcap\limits_{n\in\omega}U_n$. For a non-zero ordinal $\alpha$, let $\{W_{\gamma}: \gamma\in\alpha\}$ be a well-ordered base of neighborhoods of $x$ in $\mathbf{X}$. Let $\mathcal{V}(y)=\{V\in\tau_Y: y\in V\}$. For every $n\in\omega$, let $\Gamma_n=\{\gamma\in\alpha: (\exists V\in\mathcal{V}(y)) W_{\gamma}\times V\subseteq U_n\}$. Clearly, for every $n\in\omega$, $\Gamma_n\neq\emptyset$, so we can define $\gamma_{n}=\min\Gamma_n$, and $V_{n}=\bigcup\{ V\in\mathcal{V}(y): W_{\gamma_n}\times V\subseteq U_n\}$. Let $W=\bigcap\limits_{n\in\omega}W_{\gamma_n}$ and $V=\bigcap\limits_{n\in\omega}V_n$. Clearly $x\in W$, $y\in V$ and $W\times V\subseteq\bigcap\limits_{n\in\omega} U_n$.  Since $\mathbf{X}$ and $\mathbf{Y}$ are $P$-spaces, we have $W\in\tau_X$ and $V\in \tau_Y$, hence  $\langle x, y\rangle\in\inter_{\mathbf{X}\times\mathbf{Y}}(\bigcap\limits_{n\in\omega}(U_n))$. This shows that $\mathbf{X}\times\mathbf{Y}$ is a $P$-space.
	
	(ii) Suppose that $\mathbf{X}$ is a base-hereditarily Lindel\"of $P$-space.  We fix a base $\mathcal{B}$ of $\mathbf{X}$ such that, for every $B\in\mathcal{B}$ and every $\mathcal{G}\subseteq\mathcal{B}$ such that $B\subseteq\bigcup\mathcal{G}$, there exists a countable $\mathcal{H}\subseteq\mathcal{G}$ such that $B\subseteq\bigcup\mathcal{G}$. For $x\in X$ and $y\in Y$, let $\mathcal{W}(x)=\{W\in\mathcal{B}: x\in W\}$ and $\mathcal{V}(y)=\{V\in\tau_Y: y\in V\}$. We fix a point $\langle x_0, y_0\rangle\in X\times Y$ and prove that $\langle x_0, y_0\rangle$ is a $P$-point of $\mathbf{X}\times\mathbf{Y}$.
	
	Let $\mathcal{U}=\{U_n: n\in\omega\}$ be a family of open subsets of $\mathbf{X}\times\mathbf{Y}$  such that $\langle x_0, y_0\rangle\in U=\bigcap\limits_{n\in\omega}U_n$. For every $n\in\omega$, we define $A_n=(X\times\{y_0\})\cap U_n$ and $O_n=\pi_X[A_n]$. Then, for every $n\in\omega$, $x_0\in O_n\in\tau_X$. Let $O=\bigcap\limits_{n\in\omega}O_n$.  Since $\mathbf{X}$ is a $P$-space, we have $O\in\tau_X$, so we can fix $B\in\mathcal{B}$ such that $x_0\in B\subseteq O$.  Let us consider an arbitrary $n\in\omega$ and the family $\mathcal{V}_n=\{V\in\mathcal{V}(y_0): B\times V\subseteq U_n\}$. To prove that $\mathcal{V}_n\neq\emptyset$, we define the family $\mathcal{G}_n=\{ G\in\mathcal{B}: (\exists V\in\mathcal{V}(y_0)) G\times V\subseteq U_n\}$. We notice that $B\subseteq\bigcup\mathcal{G}_n$. Hence, by our assumption about $\mathcal{B}$, there exists a countable subfamily $\mathcal{H}\subseteq\mathcal{G}_n$ such that $B\subseteq\bigcup\mathcal{H}$. For every $G\in\mathcal{H}$, let $V(G)=\bigcup\{ V\in\mathcal{V}(y_0): G\times V\subseteq U_n\}$. Then, for every $G\in\mathcal{H}$,  $V(G)\in\mathcal{V}(y_0)$ and $G\times V(G)\subseteq U_n$. Therefore, since $\mathbf{Y}$ is a $P$-space and $\mathcal{H}$ is countable, we have $\bigcap\{V(G): G\in\mathcal{H}\}\in\mathcal{V}(y_0)$. Clearly, $B\times (\bigcap\{V(G): G\in\mathcal{H}\})\subseteq U_n$, so $\bigcap\{V(G): G\in\mathcal{H}\}\in\mathcal{V}_n$. We define $V_n=\bigcup\mathcal{V}_n$. Since $\mathcal{V}_n\neq\emptyset$, we have $V_n\in\mathcal{V}(y_0)$ and $B\times V_n\subseteq U_n$. Let $V(y_0)=\bigcap\limits_{n\in\omega}V_n$. Since $\mathbf{Y}$ is a $P$-space, we have $V(y_0)\in\mathcal{V}(y_0)$. Clearly, $\langle x_0\ y_0\rangle\in B\times V(y_0)\subseteq U$, which shows that $\langle x_0, y_0\rangle\in\inter_{\mathbf{X}\times\mathbf{Y}}(U)$. Thus, $\langle x_0, y_0\rangle$ is a $P$-point of $\mathbf{X}\times\mathbf{Y}$.
	
	(iii) Finally, suppose that $X$ is an infinite quasi Dedekind-finite set. By Theorem \ref{s4t7}, the spaces $\mathbf{X}_{cof}$ and $\mathbf{X}(\infty)$ are both $P$-spaces. It is obvious that the spaces $\mathbf{X}_{cof}$ and $\mathbf{X}(\infty)$ are also base-hereditarily Lindel\"of. Hence (iii) follows from (ii).
\end{proof}

\begin{remark}
	\label{s5r13}
	In much the same way, as in the proof of Theorem \ref{s5t12}(ii), one can show that if $\mathbf{X}$ is a locally compact regular space, then, for every $P$-space $\mathbf{Y}$, the space $\mathbf{X}\times\mathbf{Y}$ is a $P$-space.
\end{remark}

\section{A shortlist of open problems}
\label{s6}
So many open problems on $P$-spaces or (strongly) zero-dimensional spaces in $\mathbf{ZF}$ can be posed that it seems impossible to make a complete list of them. Therefore, let us pay attention only to the following open problems, strictly related to some of the results included in this article:

\begin{enumerate}
	\item Is there a model of $\mathbf{ZF}$ or a permutation model in which $\mathbf{PPP}$ fails? (See Question \ref{s1q21}.)
	\item Is it provable in $\mathbf{ZF}$ that, for every uncountable set $J$, the Cantor cube $\mathbf{2}^J$ is strongly zero-dimensional?
	\item Are the implications of Theorem \ref{s5t3} reversible in $\mathbf{ZF}$?
	\item Are the implications of items (i) and (iii) of Theorem \ref{s5t9} reversible in $\mathbf{ZF}$?
	\item Which of the implications of Theorem \ref{s5t10}(i)--(iii) is not reversible in $\mathbf{ZF}$?
	\item Are the implications of Proposition \ref{s5p6} (ii)--(v) reversible in $\mathbf{ZF}$?
	\item Does the statement ``For every strongly zero-dimensional space $\mathbf{X}$, $A(\mathbf{X})=C(\mathbf{X})$'' imply $\mathbf{CMC}$? (See Proposition \ref{s3p2}(ii).)
	\item Does the statement ``For every completely regular space $\mathbf{X}$, it holds that $\mathbf{X}$ is strongly zero-dimensional if and only if $U_{\aleph_0}(\mathbf{X})=C(\mathbf{X})$'' imply $\mathbf{CMC}$? (See Proposition \ref{s3p2}(iii).)
	\item Does the statement ``For every topological space $\mathbf{X}$, at least one of conditions (i)--(iv) of Theorem \ref{s4t12} holds'' imply $\mathbf{CMC}$?
	\item Are the implications of Theorem \ref{s4t14} reversible in $\mathbf{ZF}$?
\end{enumerate}



\begin{thebibliography}{HD82}




\normalsize
\baselineskip=17pt


\bibitem{bk} P. Bhattacharjee and M. L. Knox, \emph{$pg$-Extensions and $p$-Extensions with applications to $C(X)$},
J. Algebra Appl. 14 (2015), 1550068.


\bibitem{br} N. Brunner, \emph{Products of compact spaces in the least permutation
model}, Z. Math. Logik Grundlagen Math. 31 (1985), 441--448.

\bibitem{chhkr} O. De la Cruz, E. J. Hall, P. Howard, K. Keremedis, J. E. Rubin,
\emph{Unions and the axiom of choice}, MLQ Math. Log. Q. 54(6) (2008), 652--665.

\bibitem{Dow} A. Dow, \emph{$P$-filters and Cohen, random, and Laver forcing,} Topology Appl. 281 (2020), 107200.

\bibitem{en} R. Engelking, \emph{General Topology}, Heldermann, Sigma Series in Pure Mathematics 6, Heldermann, Berlin, 1989.


\bibitem{gh} L. Gillman and M. Henriksen,  \emph{Concerning Rings of Continuous Functions}, Trans. Am.  Math. Soc. 77(2) (1954),  340--362.

\bibitem{gt} C. Good and I. Tree, \emph{Continuing horrors of topology
	without choice}, Topology Appl. 63 (1995), 79--90.


\bibitem{han} S. Hanai, \emph{Inverse images of closed mappings. I}, Proc. Japan Acad. 37 (1961), 298--301.

\bibitem{hart} K. P. Hart, K.P. (2001) [1994], \emph{"$P$-space"}, Encyclopedia of Mathematics, EMS Press.

\bibitem{her} H. Herrlich, \emph{Axiom of Choice}, Lecture Notes in Mathematics 1875, Springer, New York, 2006.

\bibitem{hr}  P. Howard and J. E. Rubin, \emph{Consequences of the axiom of
choice}, Math. Surveys and Monographs 59, A.M.S., Providence R.I., 1998.

\bibitem{hr1} P. Howard, J. E. Rubin, \emph{Other forms added to the ones from \cite{hr}}, I Dimitriou web page https://cgraph.
inters.co/

\bibitem{ht} P. Howard, E. Tachtsis, \emph{On metrizability and compactness of certain products without the Axiom of Choice}, Topology Appl. 290 (2021), 107591.

\bibitem{j} T. J. Jech, \emph{The Axiom of Choice}, Studies in Logic and the Foundations of Mathematics, 75, North-Holland, Amsterdam, 1973.

\bibitem{j1} T. Jech, \emph{ Set Theory. The Third Millennium Edition,
	revised and expanded,} Springer Monographs in Mathematics,
Springer, Berlin, 2003.

\bibitem{kk} K. Keremedis, \emph{Disasters in topology without the axiom
	of choice}, Arch. Math. Logic. 40 (2001), 569--580.

\bibitem{klpc} K. Keremedis, \emph{On sequentially compact and related notions of compactness of metric spaces in $\mathbf{ZF}$},
Bull. Pol. Acad. Sci., Math. 64 (2016), 29--46.

\bibitem{kt} K. Keremedis and E. Tachtsis, \emph{On Loeb and weakly Loeb Hausdorff
spaces}, Sci. Math. Jpn. 53(2) (2001), 247--251.

\bibitem{kw1} K. Keremedis and E. Wajch, \emph{Hausdorff compactifications in $\mathbf{ZF}$}, Topology Appl. 258 (2019), 79--99.

\bibitem{kw0} K. Keremedis and E. Wajch, \emph{On densely complete metric spaces and extensions of uniformly continuous functions in $\mathbf{ZF}$}, J. Convex Anal. 27 (2020), 1099--1122.

\bibitem{kw4} K. Keremedis and E. Wajch, \emph{On Loeb and sequential spaces in $\mathbf{ZF}$}, Topology Appl. 280 (2020), 107279. 

\bibitem{kw2} K. Keremedis and E. Wajch, \emph{Denumerable cellular families in $\mathbf{ZF}$}, to appear in Port. Math.

\bibitem{kw3} K. Keremedis and E. Wajch, \emph{Cuf-product and cuf-sums of (quasi-) metrizable spaces in $\mathbf{ZF}$}, to appear in Period. Math. Hung.

\bibitem{loeb} P. Loeb, \emph{A new proof of the Tychonoff theorem}, Am. Math.
Mon. 72 (1965), 711--717.

\bibitem{mis} A. K. Misra, \emph{A topological view of $P$-spaces}, General Topology Appl. 2 (1971), 349--362.

\bibitem{ter} J. Terasawa, \emph{On the zero-dimensionality of some non-normal product spaces}, Sci. Tokyo
Kyoiku Daigaku, 11 (1972), 167--174.

\bibitem{w} S. Willard, \emph{General Topology}, Addison-Wesley Series in Math., Addison-Wesley Publishing Co., Reading, Massachusetts, 1970.



\end{thebibliography}
\end{document}